\documentclass[11pt]{article}

\usepackage[utf8]{inputenc}
\IfFileExists{lmodern.sty}{
  \usepackage[T1]{fontenc}
  \usepackage{lmodern}
}{
  \usepackage[OT1]{fontenc}
}
\usepackage{microtype}
\usepackage[a4paper,margin=28mm]{geometry}
\usepackage{mathtools,amssymb,amsthm}
\usepackage{aliascnt}
\usepackage{booktabs,array}
\usepackage{enumitem}
\usepackage{float}
\usepackage{xurl}
\usepackage[hidelinks]{hyperref}
\usepackage[nameinlink,noabbrev]{cleveref}

\numberwithin{equation}{section}

\newtheorem{theorem}{Theorem}[section]
\newaliascnt{proposition}{theorem}
\newtheorem{proposition}[proposition]{Proposition}
\aliascntresetthe{proposition}
\newaliascnt{lemma}{theorem}
\newtheorem{lemma}[lemma]{Lemma}
\aliascntresetthe{lemma}
\newaliascnt{corollary}{theorem}
\newtheorem{corollary}[corollary]{Corollary}
\aliascntresetthe{corollary}
\theoremstyle{definition}
\newaliascnt{definition}{theorem}

\aliascntresetthe{definition}
\theoremstyle{remark}
\newaliascnt{remark}{theorem}

\aliascntresetthe{remark}

\crefname{theorem}{theorem}{theorems}
\crefname{proposition}{proposition}{propositions}
\crefname{lemma}{lemma}{lemmas}
\crefname{corollary}{corollary}{corollaries}
\crefname{definition}{definition}{definitions}
\crefname{remark}{remark}{remarks}

\newcommand{\F}{\mathcal F}
\newcommand{\A}{\mathcal A}
\newcommand{\B}{\mathcal B}
\newcommand{\C}{\mathcal C}
\newcommand{\D}{\mathcal D}
\newcommand{\E}{\mathcal E}
\newcommand{\Hh}{\mathcal H}
\newcommand{\K}{\mathcal K}
\newcommand{\X}{\mathcal X}
\newcommand{\CH}{\operatorname{CH}}
\newcommand{\Lk}{\operatorname{Lk}}
\newcommand{\mc}{\operatorname{mc}}
\newcommand{\mcz}{\operatorname{mc}_0}
\newcommand{\supp}{\operatorname{supp}}
\newcommand{\Z}{\mathbb Z}
\newcommand{\dd}{\mathbin{\dot\cup}}

\title{Sunflower-Free Uniform Families: Recursive Constructions and Explicit Bounds}
\author{%
\small Edward Axante\thanks{Faculty of Mathematics and Computer Science,
 University of Bucharest, Romania.}, Cristian Budala\footnotemark[1],
 David Chitic\footnotemark[1], Bogdan Dumitru\footnotemark[1]\hspace{0.65em}and
 Mihai Nacu\footnotemark[1]%
}
\date{}

\begin{document}

\maketitle

\begin{abstract}
Let $f(w,k)$ be the maximum size of a $w$-uniform family containing no
sunflower with $k$ petals. We introduce a recursive construction for
sunflower-free families and use it to obtain a general lower bound on the
exponential growth rate of $f(w,k)$. We also prove a general upper bound for
$3$-uniform families with at least four petals. Our results give
$39\le f(3,4)\le49$, $f(3,5)\le146$,
$153\le f(3,6)\le255$, $259\le f(3,7)\le474$, and
$54\le f(4,3)\le83$. In addition, we prove that the maximum size of an
intersecting $4$-uniform family containing no sunflower with three petals is $27$.
The upper bounds $49$ and $83$ are computer-assisted. The finite lower
bounds come from explicit constructions.
\end{abstract}

\section{Introduction}
\label{sec:introduction}

A sunflower with $k$ petals, or $k$-sunflower, is a collection of $k$ distinct sets whose
pairwise intersections are equal. Their common intersection is called the
core and may be empty. Let $f(w,k)$ denote the maximum size of a family of
distinct $w$-element sets containing no $k$-sunflower. The
sunflower conjecture of Erd\H{o}s and Rado asks whether, for every fixed $k$,
there is a constant $C_k$ such that $f(w,k)\le C_k^w$ for every $w$
\cite{ErdosRado1960}. Alweiss, Lovett, Wu, and Zhang substantially improved
the classical upper bound \cite{AlweissLovettWuZhang2021}, and Bell,
Chueluecha, and Warnke subsequently proved
$f(w,k)\le (Ck\log w)^w$ for an absolute constant $C$ and all $w,k\ge2$
\cite{BellChueluechaWarnke2021}.

Our lower-bound arguments keep track not only of the absence of sunflowers but
also of the maximum number of pairwise disjoint members. Controlling this
matching number rules out sunflowers with empty core. Within this framework,
we construct larger sunflower-free families from smaller ones by attaching
subsets of an auxiliary set. The resulting recurrence leads to an explicit
general lower bound on the exponential growth rate of $f(w,k)$. An
alternative construction based on a bounded-degree auxiliary graph is
included in the appendix.

Abbott and Hanson introduced the study of sunflower-free families with a
bounded matching number and showed how families on disjoint ground sets can be
combined to obtain lower bounds for $f(w,k)$ \cite{AbbottHanson1974}. For
three petals, our main recurrence follows from a multiplication theorem of
Abbott and Exoo \cite{AbbottExoo1992}. We extend this recurrence to arbitrary
numbers of petals and all admissible matching-number bounds. To our knowledge,
this recurrence, the alternative recurrence in the appendix, and the resulting
exponential lower bound are new. Abbott and Hanson raised the problem of
proving recurrences for $\psi$ and reported no success
\cite[p.~11]{AbbottHanson1974}. The closest prior forms are an additive
recurrence of Abbott and Exoo for three petals
\cite[Theorem~3]{AbbottExoo1992} and an edge-attachment product of Frankl and
Wang \cite[Proposition~2.1]{FranklWang2025}. For $k-1\ge3w(m-1)$, Frankl
and Wang determine $\psi(w,k,m)$ exactly \cite[Theorem~1.11]{FranklWang2025};
the bounds proved here concern fixed $k$ and growing $w$, outside that
regime.

For upper bounds on families of triples, we begin with a largest collection of
pairwise disjoint triples and classify all other members by how they intersect
its union. This reduces the problem to degree and matching bounds for ordinary
graphs, to which we apply the theorem of Chv\'atal and Hanson
\cite{ChvatalHanson1976}. The resulting argument gives a general upper bound
for $f(3,k)$ when $k\ge4$. Separate finite analyses sharpen the result for
$f(3,4)$ and give the upper bound for $f(4,3)$.

\Cref{tab:intro-bounds} compares the numerical bounds obtained in this paper
with previously published bounds. The two published endpoints shown for
$f(3,7)$ follow from Proposition~2.1 and Theorem~1.14, respectively, of Frankl
and Wang \cite{FranklWang2025}.

\begin{table}[H]
\centering
\caption{Published comparisons and bounds proved in this paper.}
\label{tab:intro-bounds}
\begin{tabular}{@{}c@{\quad}r@{\;}c@{\;}l@{\qquad}r@{\;}c@{\;}l@{}}
\toprule
Quantity & \multicolumn{3}{c}{Published comparison}
         & \multicolumn{3}{c}{Bound proved here}\\
\midrule
$f(3,4)$ & $38$ \cite{AbbottExoo1992} $\le f(3,4)$
         & $\le$ & $69$ \cite{AbbottHansonSauer1972}
         & $39\le f(3,4)$ & $\le$ & $49$\\
$f(3,5)$ & $f(3,5)$ & $\le$ & $180$ \cite{AbbottHansonSauer1972}
         & $f(3,5)$ & $\le$ & $146$\\
$f(3,6)$ & $146$ \cite{AbbottExoo1992} $\le f(3,6)$
         & $\le$ & $305$ \cite{AbbottHansonSauer1972}
         & $153\le f(3,6)$ & $\le$ & $255$\\
$f(3,7)$ & $252\le f(3,7)$ & $\le$ & $498$ \cite{FranklWang2025}
         & $259\le f(3,7)$ & $\le$ & $474$\\
$f(4,3)$ & $54$ \cite{AbbottHanson1974} $\le f(4,3)$
         & $\le$ & $142$ \cite{AbbottHansonSauer1972}
         & $54\le f(4,3)$ & $\le$ & $83$\\
\bottomrule
\end{tabular}
\end{table}

The upper bounds $f(3,4)\le49$ and $f(4,3)\le83$ are computer-assisted and
follow from exhaustive finite enumerations. The lower bounds
$f(3,4)\ge39$ and $f(3,7)\ge259$ are established by explicit finite families.
We also determine exactly the maximum size of an intersecting $4$-uniform
family containing no $3$-sunflower: it is $27$. The main
combinatorial arguments of
\cref{sec:preliminaries,sec:constructions,sec:triple-upper}, the appendix
capacity recurrence, and every explicit finite lower-bound witness have
additionally been formalised and machine-checked in the Lean~4 proof assistant
(\cref{sec:lean}).\footnote{Code, data, and the Lean development are available at
\url{https://github.com/bogdan27182/sunflower-paper}.}

\section{Notation and preliminary facts}
\label{sec:preliminaries}

This section fixes the notation and records the basic facts used in
the paper. We introduce links and the matching-constrained extremal quantity,
determine the cases of uniformity one and two, and recall the product
inequality used to define the exponential growth rate.

\subsection{Set families}

All set families in this paper are finite and have distinct members. All
graphs are finite and simple. For a finite set $X$, write
\[
 \binom{X}{w}=\{A\subseteq X:|A|=w\}.
\]
A family $\F$ is $w$-uniform if $\F\subseteq\binom{X}{w}$ for some finite
set $X$. Its support is
\[
 \supp(\F)=\bigcup_{F\in\F}F.
\]
If distinct sets $A_1,\ldots,A_k$ satisfy
\[
 A_i\cap A_j=C\qquad(1\le i<j\le k),
\]
then they form a $k$-sunflower with core $C$. The core may be empty. The
sets $A_i\setminus C$ are its petals. For $w\ge1$ and $k\ge2$, define
\begin{equation}
 f(w,k)=\max\bigl\{|\F|:\F\text{ is $w$-uniform and contains no
 $k$-sunflower}\bigr\}.
 \label{eq:f-definition}
\end{equation}
The Erd\H{o}s--Rado sunflower lemma makes this maximum finite
\cite{ErdosRado1960}.

\begin{lemma}[Occurrence rule]
\label{lem:occurrence}
Let $A_1,\ldots,A_k$ form a sunflower. Every point in
$A_1\cup\cdots\cup A_k$ belongs to exactly one member or to all $k$ members.
\end{lemma}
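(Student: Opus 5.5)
The argument is a direct case split on how many of the sets $A_1,\ldots,A_k$ contain a given point. Let $C$ be the core, so $A_i\cap A_j=C$ for all $i\ne j$, and fix a point $x\in A_1\cup\cdots\cup A_k$. Let $I=\{i: x\in A_i\}$. This set is nonempty by the choice of $x$. We will show that either $|I|=1$ or $I=\{1,\ldots,k\}$.

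Suppose $|I|\ge2$, and pick distinct $i,j\in I$. Then $x\in A_i\cap A_j=C$. Since $C=A_1\cap A_2$ when $k\ge2$, and more generally $C\subseteq A_l$ for every $l$ (because $C=A_l\cap A_m$ for any $m\ne l$), the point $x$ lies in every $A_l$, so $I=\{1,\ldots,k\}$. Otherwise $|I|=1$, and $x$ belongs to exactly one member. This gives the dichotomy. As a byproduct, the points lying in all members are exactly the points of $C$, and each remaining point lies in a unique petal $A_i\setminus C$.

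There is no real obstacle. The only point needing care is the observation that $C\subseteq A_l$ for every $l$, which uses $k\ge2$ so that some $m\ne l$ exists; the definition of $f(w,k)$ assumes $k\ge2$. For $k=1$ the statement holds trivially, since every point of $A_1$ lies in exactly one member, which is also all members.
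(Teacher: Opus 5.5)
Your proposal is correct and is the paper's argument: a point lying in two members lies in their intersection, which is the core, and the core is contained in every member. Your remark that $C\subseteq A_l$ uses some index $m\ne l$, and hence $k\ge2$, only makes explicit a step the paper leaves implicit.
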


\begin{proof}
Suppose that a point belongs to two members. It then belongs to their
intersection, which is the core. Hence it belongs to every member.
\end{proof}

The core of a sunflower of distinct $w$-element sets has size at most $w-1$.
In particular, the possible core sizes for a sunflower of triples are zero,
one, and two.

\subsection{Matchings, degrees, and links}

A matching in a family is a subfamily of pairwise disjoint members. The
matching number $\nu(\F)$ is the largest size of a matching in $\F$. A family
is intersecting if every two distinct members meet. Equivalently,
$\nu(\F)\le1$. Since a matching of size $k$ is a $k$-sunflower with empty
core, every $k$-sunflower-free family satisfies
\begin{equation}
 \nu(\F)\le k-1.
 \label{eq:matching-cap}
\end{equation}
For $m\ge2$, define
\begin{equation}
 \psi(w,k,m)=\max\bigl\{|\F|:\F\text{ is $w$-uniform, contains no
 $k$-sunflower, and }\nu(\F)\le m-1\bigr\}.
 \label{eq:psi-definition}
\end{equation}
This matching-constrained quantity was introduced by Abbott and Hanson
\cite{AbbottHanson1974}.
In particular, $\psi(w,k,2)$ is the maximum over intersecting $w$-uniform
families without a $k$-sunflower, while $\psi(w,k,k)=f(w,k)$.

For a set $S$, define its degree in $\F$ by
\[
 d_{\F}(S)=|\{F\in\F:S\subseteq F\}|.
\]
For $S=\{x\}$, write $d_{\F}(x)$. The link of $S$ is
\[
 \Lk_{\F}(S)=\{F\setminus S:F\in\F,\ S\subseteq F\}.
\]

\begin{lemma}[Link bound]
\label{lem:link-bound}
Let $\F$ be a $w$-uniform family without a $k$-sunflower. If
$|S|=s<w$, then $\Lk_{\F}(S)$ is a $(w-s)$-uniform family without a
$k$-sunflower. Consequently,
\begin{equation}
 d_{\F}(S)=|\Lk_{\F}(S)|\le f(w-s,k).
 \label{eq:link-bound}
\end{equation}
\end{lemma}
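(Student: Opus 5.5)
The plan is to lift a sunflower from the link back to $\F$ and then read off the degree bound from the definition of $f$.

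First, uniformity and the degree identity. Every member of $\Lk_{\F}(S)$ has the form $F\setminus S$ with $S\subseteq F$, so it has exactly $w-s\ge1$ elements. The map $F\mapsto F\setminus S$, restricted to members containing $S$, is injective, since $F=(F\setminus S)\cup S$. Hence $\Lk_{\F}(S)$ is a family of distinct $(w-s)$-element sets, and $d_{\F}(S)=|\Lk_{\F}(S)|$.

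Second, sunflower-freeness, argued by contradiction. Suppose distinct members $B_1,\ldots,B_k$ of the link satisfy $B_i\cap B_j=C$ for all $i<j$. Put $A_i=B_i\cup S$. These are members of $\F$, and they are distinct by the injectivity above. Since each $B_i$ is disjoint from $S$,
\[
 A_i\cap A_j=(B_i\cap B_j)\cup S=C\cup S\qquad(i<j).
\]
So $A_1,\ldots,A_k$ is a $k$-sunflower in $\F$ with core $C\cup S$, contradicting the hypothesis on $\F$.

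Finally, since $w-s\ge1$ and $k\ge2$, the definition \eqref{eq:f-definition} applies to the link and gives $|\Lk_{\F}(S)|\le f(w-s,k)$. This is \eqref{eq:link-bound}.

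No step is a real obstacle. The only points needing care are the distinctness of the lifted sets $A_i$ and the disjointness of each $B_i$ from $S$, which is what makes the intersection identity hold. The hypothesis $s<w$ is used only so that the link is a family of nonempty sets, so that $f(w-s,k)$ is defined.
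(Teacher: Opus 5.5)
Your proof is correct and follows the paper's argument: both lift a $k$-sunflower with core $C$ in $\Lk_{\F}(S)$ to a $k$-sunflower in $\F$ with core $C\cup S$ by adjoining $S$. You also spell out two points the paper leaves implicit: the injectivity of $F\mapsto F\setminus S$, and the disjointness of each residue from $S$ that makes the intersection identity hold.
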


\begin{proof}
The members of $\Lk_{\F}(S)$ are distinct. If $k$ of them formed a
sunflower with core $C$, adjoining $S$ would give a sunflower in $\F$ with
core $C\cup S$.
\end{proof}

Two families $\mathcal A$ and $\mathcal B$ are cross-intersecting if
$A\cap B\ne\varnothing$ for every $A\in\mathcal A$ and
$B\in\mathcal B$. A collection of families is pairwise cross-intersecting if
each pair in the collection is cross-intersecting.

\subsection{The graph case}

For integers $D,q\ge1$, let $\CH(D,q)$ be the maximum number of edges in a
graph with maximum degree at most $D$ and matching number at most $q$. The
Chv\'atal--Hanson theorem \cite{ChvatalHanson1976} gives
\begin{equation}
 \CH(D,q)
 =Dq+\left\lfloor\frac D2\right\rfloor
      \left\lfloor\frac{q}{\lceil D/2\rceil}\right\rfloor.
 \label{eq:CH}
\end{equation}

\begin{lemma}[Uniformities one and two]
\label{lem:small-uniformities}
For every $k\ge2$,
\begin{equation}
 f(1,k)=k-1,
 \qquad
 f(2,k)=\CH(k-1,k-1).
 \label{eq:f1-f2}
\end{equation}
\end{lemma}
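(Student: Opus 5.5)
For $f(1,k)$ the plan is to use that a $1$-uniform family consists of distinct singletons. Any $k$ distinct singletons are pairwise disjoint, so they form a $k$-sunflower with empty core. Conversely, $k-1$ singletons contain no $k$ distinct members at all. Hence $f(1,k)=k-1$.

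For $f(2,k)$ I will identify a $2$-uniform family with the edge set of a simple graph $G$ and characterize the $k$-sunflowers in $G$. A core of size $1$ means $k$ distinct edges through a common vertex $x$ that pairwise meet only in $x$. Distinct edges through $x$ automatically meet only in $x$, so such a sunflower exists exactly when some vertex has degree at least $k$. A core of size $0$ means $k$ pairwise disjoint edges, i.e.\ a matching of size $k$. The core cannot have size $2$ because the members are distinct. So $G$ is $k$-sunflower-free if and only if $\Delta(G)\le k-1$ and $\nu(G)\le k-1$. This agrees with \eqref{eq:matching-cap} and \cref{lem:link-bound} applied to a single vertex, using $f(1,k)=k-1$. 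Taking the maximum number of edges under these two constraints gives exactly $\CH(k-1,k-1)$ by definition, and \eqref{eq:CH} then provides the explicit value.

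The only step that needs real care is the converse direction: I must check that the degree and matching conditions together exclude every $k$-sunflower. This reduces to the observation that a sunflower of edges has core size $0$ or $1$. Everything else is bookkeeping, and the Chvátal--Hanson theorem supplies the numerical value.
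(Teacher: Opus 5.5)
Your proposal is correct and follows essentially the same route as the paper's proof. In both, distinct singletons give $f(1,k)=k-1$; for graphs, a sunflower of distinct edges has empty core (a matching) or a one-point core (edges through one vertex), so $k$-sunflower-freeness is exactly $\Delta\le k-1$ and $\nu\le k-1$, and hence $f(2,k)=\CH(k-1,k-1)$ by definition.
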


\begin{proof}
A family of distinct singletons contains a $k$-sunflower exactly when it has
at least $k$ members. This gives the first equality.

View a $2$-uniform family as a graph. A sunflower of distinct edges has empty
core or a one-point core. These two cases are a matching and a set of edges
incident with one vertex. The graph is therefore $k$-sunflower-free exactly
when its matching number and maximum degree are both at most $k-1$.
Equation~\eqref{eq:CH} gives the second equality.
\end{proof}

\subsection{Products}

Abbott proved the following product inequality \cite{Abbott1966}. Frankl and
Wang later proved a vector-valued generalization \cite{FranklWang2025}.

Let $X$ and $Y$ be disjoint. For
$\mathcal A\subseteq\binom Xu$ and $\mathcal B\subseteq\binom Yv$, define
\[
 \mathcal A*\mathcal B
 =\{A\cup B:A\in\mathcal A,\ B\in\mathcal B\}.
\]
This family is $(u+v)$-uniform and has size
$|\mathcal A||\mathcal B|$.

\begin{lemma}[Product construction]
\label{lem:product}
If $\mathcal A$ and $\mathcal B$ contain no $k$-sunflower, then
$\mathcal A*\mathcal B$ contains no $k$-sunflower. Hence
\begin{equation}
 f(u+v,k)\ge f(u,k)f(v,k).
 \label{eq:supermultiplicative}
\end{equation}
\end{lemma}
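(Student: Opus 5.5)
The plan is to show directly that any $k$ distinct members of $\A*\B$ forming a sunflower would force a $k$-sunflower in $\A$ or in $\B$. Then we take $\A,\B$ extremal on disjoint ground sets to get \eqref{eq:supermultiplicative}.

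Suppose $A_1\cup B_1,\ldots,A_k\cup B_k$ are distinct members of $\A*\B$ that form a sunflower with core $C$. Here $A_i\in\A$ and $B_i\in\B$. Since $X\cap Y=\varnothing$, each member of $\A*\B$ determines its decomposition uniquely, via $A_i=(A_i\cup B_i)\cap X$ and $B_i=(A_i\cup B_i)\cap Y$. Intersecting the sunflower condition with $X$ gives
\[
 A_i\cap A_j=\bigl((A_i\cup B_i)\cap(A_j\cup B_j)\bigr)\cap X=C\cap X
 \qquad(i\ne j),
\]
and likewise $B_i\cap B_j=C\cap Y$. The key step is to show that one of the two projected lists consists of distinct sets.

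Suppose the $A_i$ are not pairwise distinct, say $A_1=A_2$. Then $A_1=A_1\cap A_2=C\cap X$. Now for any $i$ and any $j\ne i$ we have $A_i\cap A_j=C\cap X$, so every $A_i$ contains $C\cap X$. Since all $A_i$ have size $u=|C\cap X|$, every $A_i$ equals $C\cap X$. By distinctness of the members $A_i\cup B_i$, the $B_i$ are then pairwise distinct. So they form a $k$-sunflower in $\B$ with core $C\cap Y$, a contradiction. Otherwise the $A_i$ are pairwise distinct and form a $k$-sunflower in $\A$ with core $C\cap X$, again a contradiction.

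For the inequality, we choose a $k$-sunflower-free $\A\subseteq\binom Xu$ of size $f(u,k)$ and a $k$-sunflower-free $\B\subseteq\binom Yv$ of size $f(v,k)$ with $X,Y$ disjoint, which we can do by relabelling ground sets. Then $|\A*\B|=|\A||\B|$, again by unique decomposition. The only slightly delicate point is the dichotomy above: one must use uniformity to pass from two equal $A_i$ to all of them being equal, not just to a repeated pair.
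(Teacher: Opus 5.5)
Your proof is correct and follows essentially the same route as the paper: you project the sunflower onto $X$ and $Y$, then use uniformity to show that one repetition among the $A_i$ forces all of them to equal $C\cap X$, and derive a contradiction. The only cosmetic difference is that the paper applies this dichotomy to both coordinates and concludes that all $k$ product members coincide, whereas you apply it once and exhibit the $B_i$ directly as a $k$-sunflower in $\mathcal B$.
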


\begin{proof}
Suppose that $A_i\cup B_i$, for $1\le i\le k$, form a sunflower in the
product family. Intersecting with $X$ and $Y$ shows that the pairwise
intersections among the $A_i$ are equal, and the same holds among the $B_i$.

If two $A_i$ are equal, their common pairwise intersection is that whole set.
It is contained in every other $A_j$. Uniformity then implies that all $A_i$
are equal. Otherwise the $A_i$ are distinct and form a $k$-sunflower in
$\mathcal A$. The latter case is impossible, so all $A_i$ are equal. The
same argument shows that all $B_i$ are equal. The $k$ product members are
then equal, a contradiction.

Taking $\mathcal A$ and $\mathcal B$ to be extremal families on disjoint
supports, the product family has $f(u,k)f(v,k)$ members, which proves
\cref{eq:supermultiplicative}.
\end{proof}

Since $\log f(w,k)$ is superadditive, Fekete's lemma \cite{Fekete1923} and
\cref{eq:supermultiplicative} give the extended limit
\begin{equation}
 B(k)=\lim_{w\to\infty}f(w,k)^{1/w}
     =\sup_{w\ge1}f(w,k)^{1/w}.
 \label{eq:B-limit}
\end{equation}
The value in \cref{eq:B-limit} may be infinite. The sunflower conjecture
states that it is finite for every fixed $k$.

\section{A matching-constrained construction}
\label{sec:constructions}

We first construct larger sunflower-free families from smaller ones while
controlling their matching numbers. We then specialize the resulting
recurrence and combine it with a simple splitting argument to obtain a
recurrence involving only $f(w,k)$. Finally, we solve this recurrence to
derive an explicit lower bound on the exponential growth rate of $f(w,k)$.
An alternative recurrence using a bounded-degree auxiliary graph is recorded
in the appendix.

\subsection{Block-size recurrence}

The construction builds a $w$-uniform sunflower-free family from
smaller families by adjoining either one auxiliary point or a larger
auxiliary block, while preserving the required matching-number bound.

\begin{theorem}[Block-size recurrence]
\label{thm:block-size}
Let $k\ge3$ and $2\le m\le k-1$. Put
\[
 t=k-m+1.
\]
For every $w\ge t+1$,
\begin{equation}
 \psi(w,k,m)
 \ge (k-1)\psi(w-1,k,m)
    +\left\lfloor\frac{k-1}{t}\right\rfloor f(w-t,k).
 \label{eq:block-size}
\end{equation}
\end{theorem}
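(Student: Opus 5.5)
The plan is to realise \eqref{eq:block-size} by an explicit construction on a ground set split into a small auxiliary part and a large part. Let $Z=\{z_1,\ldots,z_{k-1}\}$, and let $T_1,\ldots,T_r\subseteq Z$ be pairwise disjoint blocks of size $t$, where $r=\lfloor (k-1)/t\rfloor$. Note $t=k-m+1\ge2$ because $m\le k-1$. Choose the following families, all on supports disjoint from $Z$ and from each other:
\begin{itemize}
\item a $(w-1)$-uniform family $\mathcal G$ on a set $X$, with no $k$-sunflower, $\nu(\mathcal G)\le m-1$ and $|\mathcal G|=\psi(w-1,k,m)$;
\item for each $j$, a $(w-t)$-uniform $k$-sunflower-free family $\mathcal H_j$ on a set $Y_j$ with $|\mathcal H_j|=f(w-t,k)$. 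This uses $w\ge t+1$.
\end{itemize}
Define
\[
 \F=\bigl\{\{z_i\}\cup G: 1\le i\le k-1,\ G\in\mathcal G\bigr\}\ \cup\ \bigl\{T_j\cup H: 1\le j\le r,\ H\in\mathcal H_j\bigr\}.
\]
The same $\mathcal G$ is reused for every $z_i$. The family is $w$-uniform, all listed sets are distinct, and $|\F|$ equals the right side of \eqref{eq:block-size}. It remains to check that $\nu(\F)\le m-1$ and that $\F$ has no $k$-sunflower.

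\emph{Matching bound.} Take a matching consisting of $a$ sets of the first kind and $b$ block sets.
\begin{itemize}
\item Two block sets on the same $T_j$ meet, so the block sets use $b$ distinct blocks and cover $bt$ points of $Z$.
\item The first-kind sets have disjoint $X$-traces, so $a\le\nu(\mathcal G)\le m-1$.
\item The first-kind sets use distinct points $z_i$ lying outside the used blocks, so $a\le k-1-bt$.
\end{itemize}
If $b=0$ we get $a\le m-1$. If $b\ge1$ then
\[
 a+b\le k-1-b(t-1)\le k-t=m-1 .
\]

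\emph{Sunflower-freeness.} Suppose $A_1,\ldots,A_k\in\F$ form a sunflower, and apply \cref{lem:occurrence} to the points of $Z$. Every member meets $Z$. If no point of $Z$ lay in the core, the $Z$-traces would be pairwise disjoint and nonempty, requiring at least $k$ points of $Z$; this is impossible, so some $z\in Z$ lies in every $A_i$. There are three cases.
\begin{itemize}
\item If $z$ lies in no block, all members are $\{z\}\cup G_i$. The $G_i$ are then distinct and form a $k$-sunflower in $\mathcal G$, which is the argument of \cref{lem:link-bound}.
\item If $z\in T_j$ and every member contains $T_j$, the $H_i$ form a $k$-sunflower in $\mathcal H_j$.
\item If $z\in T_j$ and the members are mixed, there cannot be two block sets. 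Two block sets would put $T_j$, of size $t\ge2$, into the core, but a first-kind set contains only one point of $Z$. So there are exactly one block set $T_j\cup H$ and at least $k-1$ sets $\{z\}\cup G_i$. The block set misses $X$, so the core misses $X$. Hence the $G_i$ are pairwise disjoint, giving a matching of size $k-1\ge m$ in $\mathcal G$, a contradiction.
\end{itemize}

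The main obstacle is the matching bound. The naive choice of disjoint copies of $\mathcal G$ fails, so one must reuse a single $\mathcal G$ and exploit that each block occupies $t$ of the only $k-1$ auxiliary points. The inequality $b(t-1)\ge t-1$ is exactly where $t=k-m+1$ is forced. The mixed sunflower case is the other delicate point, and it is handled by the same matching constraint together with $t\ge2$.
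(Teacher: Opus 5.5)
This is essentially the paper's proof: the same construction (the paper lets the families $\E_p$ share one support, which changes nothing), the same count $a+tb\le k-1$ for the matching bound, and the same three sunflower cases, including the mixed case where exactly one block set forces a matching of size $k-1\ge m$ in $\mathcal G$. One small slip is that your split by the location of $z$ misses the case $z\in T_j$ with all $k$ members of the first kind $\{z\}\cup G_i$. It is handled exactly as in your first bullet, and splitting by which kinds of members occur, as the paper does, avoids the omission.
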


\begin{proof}
Take pairwise disjoint sets $X,Y,Z$, where $|Y|=k-1$. On $X$, take a
$(w-1)$-uniform family $\A$ with no $k$-sunflower such that
\[
 |\A|=\psi(w-1,k,m),
 \qquad
 \nu(\A)\le m-1.
\]
Let $\mathcal P$ consist of $\lfloor(k-1)/t\rfloor$ pairwise disjoint
$t$-subsets of $Y$. For each $p\in\mathcal P$, take a
$(w-t)$-uniform family $\E_p$ on $Z$ with no $k$-sunflower and with
$|\E_p|=f(w-t,k)$. The families $\E_p$ may have the same support. Define
\begin{equation}
\begin{split}
 \F={}&\{T\cup\{y\}:T\in\A,\ y\in Y\}\\
 &{}\cup\{p\cup S:p\in\mathcal P,\ S\in\E_p\}.
\end{split}
\label{eq:block-family}
\end{equation}
The members in the first and second lines meet $Y$ in one and $t$ points,
respectively. Since $t\ge2$, the two lines are disjoint. All members within
each line are distinct. Consequently,
\[
 |\F|
 =|Y||\A|+\sum_{p\in\mathcal P}|\E_p|
 =(k-1)\psi(w-1,k,m)
   +\left\lfloor\frac{k-1}{t}\right\rfloor f(w-t,k),
\]
which is the right-hand side of \cref{eq:block-size}.

Consider a matching in $\F$ with $q$ members from the first line and $b$
members from the second. If $b=0$, the $X$-parts form a matching in $\A$, so
$q\le m-1$. If $b\ge1$, then, because the matching members are pairwise
disjoint, their intersections with $Y$ are also pairwise disjoint, and
\[
 q+tb\le k-1.
\]
It follows that
\[
 q+b\le k-1-(t-1)b\le k-t=m-1.
\]
Hence $\nu(\F)\le m-1$.

Suppose that $F_1,\ldots,F_k$ form a sunflower in $\F$. Every $F_i$ meets
the $(k-1)$-point set $Y$. By the pigeonhole principle, some $y\in Y$
therefore belongs to at least two of them. By \cref{lem:occurrence}, it
belongs to all of them.

If all $F_i$ come from the first line of \cref{eq:block-family}, write
$F_i=T_i\cup\{y\}$. Deleting $y$ gives a $k$-sunflower of distinct members
of $\A$.

If all $F_i$ come from the second line, every corresponding block in
$\mathcal P$ contains $y$. The blocks in $\mathcal P$ are disjoint, so all
$F_i$ use the same block $p$. Deleting $p$ gives a $k$-sunflower of distinct
members of $\E_p$.

It remains to consider a sunflower using both lines. A member from the first
line and a member from the second intersect exactly in $\{y\}$. The core is
therefore $\{y\}$. The $X$-parts of the members from the first line form a
matching in $\A$, so there are at most $m-1$ such members. All members from
the second line contain $y$. Since the blocks in $\mathcal P$ are disjoint,
they all use the same block $p$. The set $p\setminus\{y\}$ is nonempty
because $t\ge2$, so there is at most one member from the second line. The
sunflower has at most $m\le k-1$ members, a contradiction.
\end{proof}

The case $m=k-1$ gives the recurrence used for the exponential estimate.

\begin{corollary}
\label{cor:endpoint-recurrence}
For every $k\ge3$ and $w\ge3$,
\begin{equation}
 \psi(w,k,k-1)
 \ge (k-1)\psi(w-1,k,k-1)
    +\left\lfloor\frac{k-1}{2}\right\rfloor f(w-2,k).
 \label{eq:endpoint-recurrence}
\end{equation}
\end{corollary}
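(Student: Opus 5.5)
This is the specialization $m=k-1$ of \cref{thm:block-size}, so the plan is simply to check that the hypotheses of that theorem hold and that the parameters reduce as claimed. Since $k\ge3$, we have $k-1\ge2$, so $m=k-1$ lies in the admissible range $2\le m\le k-1$.

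The corresponding block size is
\[
 t=k-m+1=k-(k-1)+1=2,
\]
and the theorem's range condition $w\ge t+1$ becomes $w\ge3$, which is exactly the hypothesis of the corollary.

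Substituting $m=k-1$ and $t=2$ into \cref{eq:block-size} gives the coefficient $\lfloor (k-1)/2\rfloor$ on the second term, and the remaining terms become $\psi(w-1,k,k-1)$ and $f(w-2,k)$. This is precisely \cref{eq:endpoint-recurrence}.

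There is no real obstacle. The only point to double-check is the inequality $t\ge2$, which the proof of \cref{thm:block-size} uses in two places: to separate the two lines of the construction and to exclude mixed sunflowers. Here $t=2$, so this is satisfied.
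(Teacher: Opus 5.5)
Your proposal is correct and matches the paper, which obtains this corollary directly as the case $m=k-1$ (so $t=2$) of \cref{thm:block-size} without a separate proof. Your checks that $2\le m\le k-1$, $w\ge t+1=3$, the coefficient is $\lfloor (k-1)/2\rfloor$, and the lag is $w-2$ are exactly what the specialization requires.
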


\subsection{Unrestricted families}

The following inequalities connect the matching-constrained quantities to
$f(w,k)$. The second is a case of the Abbott--Hanson disjoint-support split
\cite{AbbottHanson1974}.

\begin{lemma}[Star and split]
\label{lem:star-split}
For every $k\ge3$ and $w\ge2$,
\begin{align}
 \psi(w,k,2)&\ge f(w-1,k),
 \label{eq:star}\\
 f(w,k)&\ge\psi(w,k,k-1)+\psi(w,k,2).
 \label{eq:split}
\end{align}
\end{lemma}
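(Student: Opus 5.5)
Both inequalities will be proved by explicit constructions.

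For \eqref{eq:star}, take an extremal $(w-1)$-uniform family $\E$ on a set $X$ with no $k$-sunflower and $|\E|=f(w-1,k)$. Fix a new point $x\notin X$ and form the star $\{E\cup\{x\}:E\in\E\}$. This family is $w$-uniform, has $f(w-1,k)$ distinct members, and is intersecting, since every member contains $x$. Suppose $k$ of its members formed a sunflower. Each contains $x$, so $x$ lies in the core. Deleting $x$ would then give a $k$-sunflower of distinct members of $\E$, which is impossible. Hence the star witnesses $\psi(w,k,2)\ge f(w-1,k)$.

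For \eqref{eq:split}, use the Abbott--Hanson disjoint-support idea. Take a $w$-uniform family $\A$ on a set $X$ with no $k$-sunflower, with $\nu(\A)\le k-2$ and $|\A|=\psi(w,k,k-1)$. Take an intersecting $w$-uniform family $\B$ on a set $Y$ disjoint from $X$, with no $k$-sunflower and $|\B|=\psi(w,k,2)$. Put $\F=\A\cup\B$. Because the supports are disjoint, the two parts share no member, so $|\F|$ is the sum of the two sizes.

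It remains to show that $\F$ has no $k$-sunflower. Let $F_1,\dots,F_k$ be a sunflower in $\F$. If all $F_i$ lie in $\A$, or all lie in $\B$, this contradicts the hypothesis on that family. Otherwise some $F_i$ comes from $\A$ and some $F_j$ comes from $\B$. These two sets are disjoint, so the core is empty and the $F_i$ form a matching of size $k$. A matching in $\F$ consists of a matching in $\A$ together with a matching in $\B$, so its size is at most $\nu(\A)+\nu(\B)\le (k-2)+1=k-1$. This is a contradiction, so $\F$ witnesses $f(w,k)\ge\psi(w,k,k-1)+\psi(w,k,2)$.

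The only step that needs care is the mixed-sunflower case. It is resolved by the observation that a single cross pair forces an empty core, and the matching-number budget $(k-2)+1$ is exactly $k-1$. The hypotheses $k\ge3$ and $w\ge2$ ensure that $\psi(w,k,k-1)$ is defined (this needs $m=k-1\ge2$) and that $f(w-1,k)$ makes sense.
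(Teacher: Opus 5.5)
Your proposal is correct and follows the paper's proof essentially verbatim: the star obtained by adjoining a new point to an extremal $(w-1)$-uniform family gives \eqref{eq:star}, and placing extremal families for $\psi(w,k,k-1)$ and $\psi(w,k,2)$ on disjoint supports gives \eqref{eq:split}. In that second construction, a mixed sunflower has empty core and so is excluded by the matching bound $(k-2)+1=k-1$.
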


\begin{proof}
For \cref{eq:star}, take an extremal $(w-1)$-uniform family with
$f(w-1,k)$ members and add one new point to every member. The resulting
family has the same size and is intersecting. Deleting the new point from a
sunflower would give a sunflower in the original family, so the resulting
family is counted by $\psi(w,k,2)$. This proves \cref{eq:star}.

For \cref{eq:split}, take extremal families attaining $\psi(w,k,k-1)$ and
$\psi(w,k,2)$ and place them on disjoint supports. Their union has size
$\psi(w,k,k-1)+\psi(w,k,2)$. A sunflower with nonempty core cannot use
members from both supports. A sunflower with empty core would be a matching
of size $k$, but the matching number of the union is at most
\[
 (k-2)+1=k-1.
\]
Thus the union contains no $k$-sunflower, which proves \cref{eq:split}.
\end{proof}

For $k\ge4$, Frankl and Wang proved
\begin{equation}
 \psi(3,k,2)=f(2,k)
 \label{eq:FW-intersecting-triples}
\end{equation}
\cite[Theorem~1.15]{FranklWang2025}. Thus the star bound
\cref{eq:star} is tight when $w=3$.

\subsection{Exponential rate}

Combining \cref{cor:endpoint-recurrence,lem:star-split} yields simultaneous
recursive lower bounds for $\psi(w,k,k-1)$ and $f(w,k)$.
We use $a_w$ to track lower bounds for $\psi(w,k,k-1)$ and $b_w$ to track
lower bounds for $f(w,k)$.

\begin{proposition}[Recursive lower bounds]
\label{prop:recursive-lower}
Fix $k\ge3$. Let $a_3,b_2,b_3$ be positive integers satisfying
\[
 a_3\le\psi(3,k,k-1),
 \qquad
 b_2\le f(2,k),
 \qquad
 b_3\le f(3,k).
\]
For $w\ge4$, define
\begin{align}
 a_w&=(k-1)a_{w-1}
       +\left\lfloor\frac{k-1}{2}\right\rfloor b_{w-2},
 \label{eq:a-recursion}\\
 b_w&=a_w+b_{w-1}.
 \label{eq:b-recursion}
\end{align}
Then
\[
 a_w\le\psi(w,k,k-1),
 \qquad
 b_w\le f(w,k)
 \qquad(w\ge4).
\]
\end{proposition}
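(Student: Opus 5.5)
We prove both inequalities simultaneously by strong induction on $w\ge4$. The inductive hypothesis at stage $w$ is that $a_v\le\psi(v,k,k-1)$ and $b_v\le f(v,k)$ for every $v$ with $3\le v<w$ (and additionally $b_2\le f(2,k)$). For $v=3$ these hold by the assumptions on $a_3$ and $b_3$, and $b_2\le f(2,k)$ is also assumed. So the base data cover everything the recursions \eqref{eq:a-recursion} and \eqref{eq:b-recursion} reference at $w=4$, namely $a_3$, $b_2$ and $b_3$.

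For the bound on $a_w$ we apply \cref{cor:endpoint-recurrence} at uniformity $w\ge4\ge3$:
\[
 \psi(w,k,k-1)\ge (k-1)\psi(w-1,k,k-1)+\left\lfloor\frac{k-1}{2}\right\rfloor f(w-2,k).
\]
Both coefficients $k-1$ and $\lfloor (k-1)/2\rfloor$ are nonnegative, so the right-hand side is monotone in the two quantities. Substituting $\psi(w-1,k,k-1)\ge a_{w-1}$ and $f(w-2,k)\ge b_{w-2}$ from the hypothesis (here $w-1\ge3$ and $w-2\ge2$) gives $\psi(w,k,k-1)\ge a_w$.

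For the bound on $b_w$ we combine \eqref{eq:split} and \eqref{eq:star} of \cref{lem:star-split}, valid since $w\ge2$:
\[
 f(w,k)\ge\psi(w,k,k-1)+\psi(w,k,2)\ge\psi(w,k,k-1)+f(w-1,k).
\]
We then use the bound $\psi(w,k,k-1)\ge a_w$ just established at the same stage, together with $f(w-1,k)\ge b_{w-1}$ from the hypothesis. This gives $f(w,k)\ge a_w+b_{w-1}=b_w$.

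The only point needing care is the order of the argument: $a_w$ must be handled before $b_w$ at each stage, and the hypothesis must cover the indices $w-1$ and $w-2$ down to the base values $a_3,b_2,b_3$. One also checks that \eqref{eq:split} is applicable, which requires $k\ge3$ so that the matching bound $\nu\le k-2$ for $\psi(w,k,k-1)$ is meaningful; for $k=3$, $\psi(w,3,2)$ appears twice, and \eqref{eq:split} still applies as stated. No step is a genuine obstacle; the argument is bookkeeping on monotonicity and indices.
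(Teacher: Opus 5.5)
Your proposal is correct and follows essentially the same route as the paper. The paper also inducts on $w$: it applies \cref{cor:endpoint-recurrence} with the bounds on $a_{w-1}$ and $b_{w-2}$ to get $a_w\le\psi(w,k,k-1)$, then chains \eqref{eq:split} and \eqref{eq:star} with $b_{w-1}\le f(w-1,k)$ to get $b_w\le f(w,k)$; the only cosmetic difference is that you use strong induction over all earlier indices, while the paper carries forward just the three hypotheses on $a_{w-1}$, $b_{w-2}$, $b_{w-1}$.
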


\begin{proof}
We prove both inequalities simultaneously by induction. For $w\ge4$, assume
that
\[
 a_{w-1}\le\psi(w-1,k,k-1),
 \qquad
 b_{w-2}\le f(w-2,k),
 \qquad
 b_{w-1}\le f(w-1,k).
\]
When $w=4$, these are precisely the three assumed initial bounds. By
\cref{eq:endpoint-recurrence},
\begin{align*}
 \psi(w,k,k-1)
 &\ge (k-1)\psi(w-1,k,k-1)
      +\left\lfloor\frac{k-1}{2}\right\rfloor f(w-2,k)\\
 &\ge (k-1)a_{w-1}
      +\left\lfloor\frac{k-1}{2}\right\rfloor b_{w-2}
  =a_w.
\end{align*}
Using \cref{eq:star,eq:split}, we then obtain
\begin{align*}
 f(w,k)
 &\ge \psi(w,k,k-1)+\psi(w,k,2)\\
 &\ge a_w+f(w-1,k)\\
 &\ge a_w+b_{w-1}=b_w.
\end{align*}
These two bounds provide the induction hypotheses at the next index.
\end{proof}

The two recurrences reduce to one recurrence for $b_w$. For $w\ge5$,
\cref{eq:b-recursion} at index $w-1$ gives
\[
 a_{w-1}=b_{w-1}-b_{w-2}.
\]
Substitution into \cref{eq:a-recursion,eq:b-recursion} gives
\begin{equation}
 b_w=k b_{w-1}
     -\left\lceil\frac{k-1}{2}\right\rceil b_{w-2},
 \qquad w\ge5.
 \label{eq:reduced-recursion}
\end{equation}

This recurrence starts at $w=5$ because $b_3$ need not equal $a_3+b_2$.
The construction first gives $a_4$ and then sets $b_4=a_4+b_3$.

\begin{theorem}[Exponential rate]
\label{thm:exponential-rate}
For every $k\ge3$,
\begin{equation}
 B(k)\ge r_k,
 \qquad
 r_k=
 \begin{cases}
 \dfrac{k+\sqrt{k^2-2k}}{2},&k\text{ even},\\[6pt]
 \dfrac{k+\sqrt{k^2-2k+2}}{2},&k\text{ odd}.
 \end{cases}
 \label{eq:exponential-rate}
\end{equation}
\end{theorem}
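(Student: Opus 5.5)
The plan is to analyse the linear recurrence \cref{eq:reduced-recursion} and show that $b_w$ grows like a positive constant times $r_k^w$. Then \cref{prop:recursive-lower} and \cref{eq:B-limit} will give $B(k)\ge\lim b_w^{1/w}=r_k$. Put $c=\lceil (k-1)/2\rceil$, so $c=k/2$ for even $k$ and $c=(k-1)/2$ for odd $k$. The characteristic polynomial $x^2-kx+c$ has roots
\[
 x_\pm=\frac{k\pm\sqrt{k^2-4c}}{2}.
\]
Here $k^2-4c$ equals $k^2-2k$ for even $k$ and $k^2-2k+2$ for odd $k$. Hence $x_+=r_k$ in both cases. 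Since $k\ge3$, the discriminant is positive and $0<x_-<x_+$. The two roots are distinct, so for $w\ge3$ we have $b_w=\alpha x_+^{w}+\beta x_-^{w}$, with constants $\alpha,\beta$ determined by $b_3,b_4$.

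The key step is to show $\alpha>0$. A clean route avoids computing $\alpha$ explicitly. Set $d_w=b_w-x_-b_{w-1}$. From the recurrence, $d_w=x_+d_{w-1}$ for $w\ge5$, so $d_w=x_+^{w-4}d_4$. It therefore suffices to show $d_4>0$, that is, $b_4>x_-b_3$.

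For this, I will choose initial values explicitly. Take $b_2=f(2,k)$ and $b_3=f(3,k)$, or any positive admissible values. Take $a_3\ge 1$, for instance $a_3=\psi(3,k,k-1)$, which is at least $1$ trivially. Then
\[
 b_4=a_4+b_3=(k-1)a_3+\lfloor (k-1)/2\rfloor b_2+b_3>b_3.
\]
Since $x_-=c/x_+<1$ (because $x_+>k/2\ge c$ and $x_+x_-=c$), we get $b_4>b_3>x_-b_3$, so $d_4>0$. The bound $x_-<1$ is the one inequality to check carefully. For odd $k$ it needs $x_+>c=(k-1)/2$, which is clear from $x_+>k/2$.

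Then $b_w\ge d_w=x_+^{w-4}d_4$, because $x_-b_{w-1}\ge0$, where $b_w>0$ by induction. So $f(w,k)\ge b_w\ge d_4\,r_k^{w-4}$ for all $w\ge4$. Taking $w$-th roots and letting $w\to\infty$ in \cref{eq:B-limit} gives $B(k)\ge r_k$.

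The main obstacle is purely the positivity of the dominant coefficient. The $d_w$ telescoping handles it, provided $x_-<1$. I expect no other difficulty, since all other ingredients are already in \cref{prop:recursive-lower}.
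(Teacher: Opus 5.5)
Your proposal is correct and follows essentially the same argument as the paper: the paper also sets $u_w=b_w-sb_{w-1}$ with $s$ the smaller root of $x^2-kx+c$, shows $u_w=ru_{w-1}$ for $w\ge5$ and $u_4=a_4+(1-s)b_3>0$, and concludes from $b_w\ge u_w$. The only cosmetic difference is how $s<1$ is shown: the paper uses the signs of the quadratic at $0$ and $1$, whereas you use $x_+>k/2\ge c$ together with $x_+x_-=c$.
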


\begin{proof}
Use the sequences in \cref{prop:recursive-lower}. Consider the polynomial
\[
 x^2-kx+c,
 \qquad
 c=\left\lceil\frac{k-1}{2}\right\rceil.
\]
It is positive at $0$, negative at $1$, and tends to infinity with $x$.
Its roots $s<r$ therefore satisfy $0<s<1<r$. Put
\[
 u_w=b_w-sb_{w-1}.
\]
Equation~\eqref{eq:reduced-recursion} and the identities $r+s=k$ and $rs=c$
give
\begin{align*}
 u_w
 &=b_w-sb_{w-1}\\
 &=(k-s)b_{w-1}-c b_{w-2}\\
 &=r b_{w-1}-rs b_{w-2}\\
 &=r\bigl(b_{w-1}-s b_{w-2}\bigr)
  =r u_{w-1}
 \qquad(w\ge5).
\end{align*}
The initial value is
\[
 u_4=b_4-sb_3=a_4+(1-s)b_3>0.
\]
Therefore
\[
 u_w=u_4r^{w-4}
 \qquad(w\ge4).
\]
Since $b_w=u_w+sb_{w-1}\ge u_w$ and $b_w\le f(w,k)$,
\[
 f(w,k)^{1/w}\ge u_4^{1/w}r^{(w-4)/w}.
\]
Letting $w$ tend to infinity gives $B(k)\ge r$. Substituting the two values
of $c$ into the quadratic formula gives \cref{eq:exponential-rate}.
\end{proof}

\begin{corollary}
\label{cor:rate-values}
The bound in \cref{thm:exponential-rate} gives
\[
 B(4)\ge2+\sqrt2,
 \qquad
 B(5)\ge\frac{5+\sqrt{17}}2,
 \qquad
 B(6)\ge3+\sqrt6.
\]
As $k$ tends to infinity,
\begin{equation}
 r_k=
 \begin{cases}
 k-\dfrac12-\dfrac{1}{4k}+O(k^{-2}),&k\text{ even},\\[6pt]
 k-\dfrac12+\dfrac{1}{4k}+O(k^{-2}),&k\text{ odd}.
 \end{cases}
 \label{eq:rate-expansion}
\end{equation}
\end{corollary}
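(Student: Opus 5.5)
The corollary has two parts, and both follow from \cref{thm:exponential-rate} by explicit evaluation of $r_k$. For the three numerical values I will substitute into the formula for $r_k$. For $k=4$ (even), $k^2-2k=8$, so $r_4=(4+2\sqrt2)/2=2+\sqrt2$. For $k=5$ (odd), $k^2-2k+2=17$, so $r_5=(5+\sqrt{17})/2$. For $k=6$ (even), $k^2-2k=24$, so $r_6=(6+2\sqrt6)/2=3+\sqrt6$. Each of these lower bounds on $B(k)$ is then immediate from \cref{eq:exponential-rate}.

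For the asymptotic expansion \cref{eq:rate-expansion}, I will write the discriminant as $k^2-2k+\epsilon$, with $\epsilon=0$ for $k$ even and $\epsilon=2$ for $k$ odd. Completing the square gives $k^2-2k+\epsilon=(k-1)^2+(\epsilon-1)$, so
\[
 \sqrt{k^2-2k+\epsilon}=(k-1)\sqrt{1+\frac{\epsilon-1}{(k-1)^2}}
 =(k-1)+\frac{\epsilon-1}{2(k-1)}+O(k^{-3}),
\]
using $\sqrt{1+x}=1+x/2+O(x^2)$ with $x=O(k^{-2})$. Then $\tfrac{\epsilon-1}{2(k-1)}=\tfrac{\epsilon-1}{2k}+O(k^{-2})$. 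Adding $k$ and halving gives
\[
 r_k=k-\tfrac12+\tfrac{\epsilon-1}{4k}+O(k^{-2}).
\]
This equals $k-\tfrac12-\tfrac1{4k}+O(k^{-2})$ for $k$ even and $k-\tfrac12+\tfrac1{4k}+O(k^{-2})$ for $k$ odd, as claimed.

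No step here is a genuine obstacle. The only point needing care is bookkeeping of the error terms: the replacement of $1/(k-1)$ by $1/k$ must be absorbed into $O(k^{-2})$, and the Taylor remainder must be $O(k^{-3})$, hence also within $O(k^{-2})$. Both hold because $k\ge3$ keeps $k-1$ bounded away from zero.
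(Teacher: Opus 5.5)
Your proposal is correct and follows the paper's proof: direct substitution for the three values, then expanding $\sqrt{k^2-2k}$ and $\sqrt{k^2-2k+2}$ around $k-1$ and substituting into \cref{eq:exponential-rate}. The only difference is that you derive those two square-root expansions by completing the square, whereas the paper simply states them.
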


\begin{proof}
The three displayed values follow by substitution. For the asymptotic
expansion, use
\begin{align*}
 \sqrt{k^2-2k}
   &=k-1-\frac{1}{2k}+O(k^{-2}),\\
 \sqrt{k^2-2k+2}
   &=k-1+\frac{1}{2k}+O(k^{-2}),
\end{align*}
and substitute into \cref{eq:exponential-rate}.
\end{proof}

For comparison, Abbott, Hanson, and Sauer proved
$B(3)\ge\sqrt{10}$ \cite{AbbottHansonSauer1972}. This is stronger than the
$k=3$ specialization of \cref{thm:exponential-rate}. Their stronger
three-petal estimate comes from a substitution construction whose proof is
specific to three petals, whereas \cref{thm:exponential-rate} applies to
every $k\ge3$. The constructions of Abbott and Exoo imply
\[
 B(4)\ge38^{1/3},
 \qquad
 B(5)\ge\sqrt{20},
 \qquad
 B(6)\ge146^{1/3}
\]
\cite{AbbottExoo1992}. The three corresponding bounds in
\cref{cor:rate-values} are larger.

The exponential-rate argument uses \cref{eq:endpoint-recurrence}. The other
matching caps in \cref{thm:block-size} give additional finite recurrences.
Appendix~\ref{sec:capacity-appendix} records the alternative capacity
recurrence.

\section{Upper bounds for triples}
\label{sec:triple-upper}

This section proves an upper bound for $f(3,k)$ that applies for every
$k\ge4$ and has leading term $2k^3$. At $k=5,6,7$ it gives
\[
 f(3,5)\le146,
 \qquad
 f(3,6)\le255,
 \qquad
 f(3,7)\le474.
\]
The argument begins with a largest collection of pairwise disjoint triples.
Every member of the family meets their union, since otherwise the collection
could be enlarged. We classify the members by the number of points they have
in this union. Members meeting it in one point give graphs on the remaining
points; maximality forces certain pairs of these graphs to be
cross-intersecting, while the absence of sunflowers bounds their degrees and
matching numbers. Combining these graph bounds with an incidence count gives
the general upper bound.

The maximum-matching framework is inherited from Abbott and Hanson: they
decompose a family around disjoint triples and analyze the resulting residue
graphs, obtaining the bound $3(k-1)$ in one case and $4(k-1)$ in general
\cite{AbbottHanson1974}. Our graph lemma sharpens the general term to
$3(k-1)$, and the subsequent argument replaces the residue matching cap
$k-1$ by $k-2$. Frankl and Wang give another general upper bound for triple
families \cite[Theorem~1.14]{FranklWang2025}. Both comparison bounds apply
only when $k\ge7$; asymptotically, the Frankl--Wang bound has the smaller
leading term $\frac53k^3$.

\subsection{Cross-intersecting graphs}

We first establish the required bound for three pairwise cross-intersecting
graphs.

\begin{lemma}[Cross-intersecting graph bound]
\label{lem:triple-cross-graphs}
Let $A,B,C$ be simple graphs whose edge families are pairwise
cross-intersecting. Suppose that each graph has maximum degree at most
$k-1$, where $k\ge4$. If at least two of the graphs are nonempty, then
\[
 |E(A)|+|E(B)|+|E(C)|\le3(k-1).
\]
\end{lemma}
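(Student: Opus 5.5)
The plan is a case analysis on the structure of the largest graph. The key observation is that cross-intersection turns every edge of one graph into a vertex cover of the others. Precisely, if $xy\in E(B)$, then every edge of $A$ and of $C$ meets $\{x,y\}$, so
\[
 |E(A)|\le d_A(x)+d_A(y)\le 2(k-1),
\]
and similarly for $C$. Order the graphs so that $|E(A)|\ge|E(B)|\ge|E(C)|$; by hypothesis $A$ and $B$ are nonempty. Write $\tau(G)$ for the minimum size of a vertex cover of $G$. We split according to whether $\tau(A)=1$ or $\tau(A)\ge2$.

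\emph{Case $\tau(A)=1$.} Here $A$ is a star centred at some $v$. If $A$ has at least three edges $vx_1,vx_2,vx_3$, an edge of $B$ or $C$ avoiding $v$ would have to meet all of $x_1,x_2,x_3$, which is impossible. So every edge of $B$ and of $C$ contains $v$, and the three graphs contribute
\[
 d_A(v)+d_B(v)+d_C(v)\le 3(k-1).
\]
If $A$ has at most two edges, then $|E(A)|+|E(B)|+|E(C)|\le 3|E(A)|\le 6\le 3(k-1)$ because $k\ge4$.

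\emph{Case $\tau(A)\ge2$.} Then $A$ contains either two disjoint edges $e,f$ or a triangle, since a graph with no two disjoint edges is a star or a triangle. Every edge of $B$ and $C$ is a $2$-element cover of $A$.
\begin{itemize}
 \item If $A$ contains a triangle, a $2$-element cover of $A$ must consist of two triangle vertices. Hence $B\cup C$ lies inside the triangle and $|E(B)|,|E(C)|\le3$.
 \item If $A$ contains disjoint edges $e,f$, each edge of $B\cup C$ has one end in $e$ and one in $f$. So $B\cup C$ lies in the $4$-cycle $K(e,f)$ and $|E(B)|,|E(C)|\le4$.
\end{itemize}
In both subcases, fix any edge $xy\in E(B)$; the vertex-cover bound above gives $|E(A)|\le 2(k-1)$. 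If instead $|E(B)|\le 2$, we get $|E(A)|+|E(B)|+|E(C)|\le 2(k-1)+4$, which is at most $3(k-1)$ once $k\ge5$, so that case needs sharpening only at $k=4$.

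To sharpen, use the constraints that small $B,C$ place on $A$.
\begin{itemize}
 \item If $B$ has two disjoint edges (opposite sides of the $4$-cycle, or any two edges when $B$ is a matching), every edge of $A$ must meet both. Then $A$ lies on the $4$ vertices of $e\cup f$, so $|E(A)|\le 6$.
 \item If $B$ has two edges $xy,xz$ sharing a vertex $x$, every edge of $A$ avoiding $x$ must be $yz$. Hence $|E(A)|\le d_A(x)+1\le k$.
\end{itemize}
The same reasoning applies to $C$ and to the mutual cross-intersection of $B$ and $C$ inside the $4$-cycle or triangle. Combining these gives totals such as $k+4+4$ or $6+4+4$. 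These must then be compared against $3(k-1)$, using in addition that $|E(B)|+|E(C)|$ is small whenever both are large, since $B$ and $C$ cross-intersect inside a $4$-cycle.

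The main obstacle is this last bookkeeping at $k=4$, where $3(k-1)=9$ is tight. The crude estimates $2(k-1)+|E(B)|+|E(C)|$ or $6+4+4$ overshoot there. I would first try to show that when $\tau(A)\ge2$ the union $A\cup B\cup C$ lives on at most four or five vertices, and then enumerate the few configurations directly. This is a finite check, independent of $k$ apart from degree caps of at least $3$.
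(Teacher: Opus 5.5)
Your setup is sound and matches the paper's: the star case ($\tau(A)=1$) is handled correctly, and confining $B\cup C$ to the four-cycle $K$ on $e\cup f$ (or to the triangle) is exactly the paper's reduction. However, the proof stops there, and what you call ``bookkeeping'' is the actual content of the lemma.

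\textbf{The missing bound on $B$ and $C$.} You need $|E(B)|+|E(C)|\le4$ whenever $B$ and $C$ are both nonempty subgraphs of $K$. This holds because each edge of $K$ is disjoint only from its opposite edge. Hence, for each of the two opposite pairs of $K$, the graphs $B$ and $C$ together contain at most two of its edges.

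\textbf{The missing bounds on $A$.} Put $U=E(B)\cup E(C)$. Every edge of $A$ meets every edge of $U$, which gives $|E(A)|\le 2(k-1),\,k,\,4,\,3,\,2$ according as $U$ is one edge, two adjacent edges, two opposite edges, three edges, or all of $K$. Your bound $|E(A)|\le6$ for two opposite edges is too weak: every edge of $A$ needs one endpoint in each of them, so there are at most four. With these bounds, the total is at most $2k$ when $|U|=1$ and at most $\max\{k+4,8\}$ otherwise. When $C$ is empty, the totals are $2k-1,\,k+2,\,6,\,6,\,6$. Without these bounds, your estimates $k+4+4$ and $6+4+4$ exceed $3(k-1)$ at $k=5$ as well as at $k=4$, so the gap is not confined to $k=4$.

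\textbf{The triangle subcase.} This is also left open. As written, you bound $|E(A)|$ by $2(k-1)$ or by $k$. With $B=C$ equal to the triangle, this gives more than $9$ at $k=4$. The fix is that a graph properly containing a triangle has two disjoint edges. So in this subcase $A$ is exactly the triangle, and the total is at most $9$.

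\textbf{The proposed finish.} Your plan to show that $A\cup B\cup C$ lives on at most five vertices and then enumerate cannot work. For $k=4$, take $A=\{pq,rs,pa,pb,rc,rd\}$ and $B=C=\{pr\}$. This satisfies every hypothesis, has $\tau(A)\ge2$, and spans eight vertices. The case analysis has to be organized by the edge set $U\subseteq K$, not by the support of $A$.
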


\begin{proof}
First suppose that none of $A,B,C$ contains two disjoint edges; equivalently,
each nonempty graph is intersecting. An intersecting graph is a star or a
subgraph of a triangle. Indeed, if it
contains the edges $xy$ and $xz$, then any edge avoiding $x$ must be $yz$.
If such an edge occurs, no further edge is possible outside this triangle.

If one of the graphs contains three edges of a star centered at $x$, every
edge in all three graphs contains $x$. A two-point edge avoiding $x$ cannot
meet three distinct leaves. The degree bound gives at most $k-1$ edges in
each graph. If one graph contains a triangle, every edge in all three graphs
is an edge of that triangle, and the total is at most $9\le3(k-1)$. In the
remaining case each graph has at most two edges.

Now suppose that one graph, say $A$, contains disjoint edges
$P=\{p,q\}$ and $Q=\{r,s\}$. Every edge of $B$ and $C$ has one endpoint in
$P$ and one in $Q$. Thus both are subgraphs of
\[
 K=\{pr,ps,qr,qs\}.
\]

Assume that exactly one of $B,C$ is nonempty, say $B$. The possible edge
sets of $B$, up to symmetry, give the following bounds:
\[
\begin{array}{c|ccccc}
E(B)&\text{one}&\text{two adjacent}&\text{two opposite}&\text{three}&\text{four}\\
\hline
|E(A)|&2(k-1)&k&4&3&2\\
|E(A)|+|E(B)|&2k-1&k+2&6&6&6.
\end{array}
\]
For one prescribed edge, every edge of $A$ belongs to the union of its two
endpoint-stars. For two adjacent prescribed edges, every edge of $A$ belongs
to their common-endpoint star or is the edge joining their other endpoints.
The remaining three bounds follow by listing the two-point sets that meet
every prescribed edge. Each total in the last row is at most $3(k-1)$.

Finally, suppose that both $B$ and $C$ are nonempty. Each edge of $K$ is
disjoint only from its opposite edge. Cross-intersection therefore gives
\begin{equation}
 |E(B)|+|E(C)|\le4.
 \label{eq:triple-BC}
\end{equation}
Put $U=E(B)\cup E(C)$. If $|U|=1$, then $B$ and $C$ consist of the same
single edge. Every edge of $A$ meets this edge, so
\[
 |E(A)|+|E(B)|+|E(C)|
 \le2(k-1)+2=2k\le3(k-1).
\]
If $|U|\ge2$, the possibilities for $U$ are two adjacent edges, two opposite
edges, three edges, or all four edges of $K$. The corresponding bounds for
$|E(A)|$ are $k,4,3,2$. Together with \cref{eq:triple-BC}, the total is at
most
\[
 \max\{k+4,8\}\le3(k-1).
\]
\end{proof}

\subsection{Maximum-matching bound}

\begin{theorem}
\label{thm:triple-general-upper}
For every $k\ge4$, put
\[
 h_k=\CH(k-1,k-1)=f(2,k),
 \qquad
 c_k=\max\{\CH(k-1,k-2),3(k-1)\}.
\]
Then
\begin{equation}
 f(3,k)\le
 \frac{k-1}{2}\bigl(c_k+3h_k-1\bigr).
 \label{eq:triple-general-upper}
\end{equation}
\end{theorem}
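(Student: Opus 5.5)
Let $\F$ be a $3$-uniform family with no $k$-sunflower. Choose a maximum matching $M_1,\ldots,M_m$ in $\F$ and put $U=M_1\cup\cdots\cup M_m$. By \cref{eq:matching-cap}, $m\le k-1$. By maximality every member of $\F$ meets $U$.

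Split $\F=\F_1\dd\F_{\ge2}$ according to whether a member meets $U$ in exactly one point or in at least two. For $x\in U$, let $G_x$ be the graph on $V\setminus U$ whose edges are the sets $F\setminus\{x\}$ with $F\in\F_1$ and $x\in F$. Then
\[
 |\F_1|=\sum_{x\in U}|E(G_x)|.
\]
The plan is to bound $|\F_1|$ triple by triple, to bound $|\F_{\ge2}|$ by double counting against the degree bound, and then to combine the two.

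\emph{Step 1: the graphs attached to one triple.} Fix $M_i=\{a,b,c\}$. The graphs $G_a,G_b,G_c$ are pairwise cross-intersecting. Suppose instead that $e\in G_a$ and $f\in G_b$ are disjoint. Then $e\cup\{a\}$ and $f\cup\{b\}$ are disjoint members of $\F$. They avoid every other $M_j$, because $e,f\subseteq V\setminus U$. Replacing $M_i$ by these two sets would give a larger matching, which is impossible.

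Each $G_x$ has maximum degree at most $k-1$. This follows from \cref{lem:link-bound} applied to pairs, since $f(1,k)=k-1$. Hence, if at least two of $G_a,G_b,G_c$ are nonempty, \cref{lem:triple-cross-graphs} gives a total of at most $3(k-1)$ edges.

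If only $G_a$ is nonempty, I claim $\nu(G_a)\le k-2$. Suppose $G_a$ had $k-1$ disjoint edges. Adding $a$ to each gives $k-1$ triples through $a$. Together with $M_i$ they form a $k$-sunflower with core $\{a\}$, because $\{b,c\}\subseteq U$ is disjoint from all these edges. So $|E(G_a)|\le\CH(k-1,k-2)$.

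In every case the three graphs at $M_i$ carry at most $c_k$ edges. Summing over the $m$ triples gives $|\F_1|\le mc_k$.

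\emph{Step 2: double counting.} By \cref{lem:link-bound}, each point satisfies $d_\F(x)\le f(2,k)=h_k$. Hence
\[
 \sum_{x\in U}d_\F(x)\le 3mh_k .
\]
On the other hand, this sum equals $\sum_{F\in\F}|F\cap U|$. A member of $\F_1$ contributes $1$. A member of $\F_{\ge2}$ contributes at least $2$. The $m$ matching triples lie in $\F_{\ge2}$ and contribute $3$, one more than the minimum. Therefore
\[
 |\F_1|+2|\F_{\ge2}|+m\le 3mh_k .
\]
This gives
\[
 |\F|=|\F_1|+|\F_{\ge2}|\le\frac{|\F_1|+3mh_k-m}{2}\le\frac m2\bigl(c_k+3h_k-1\bigr).
\]
The bracket is positive, so the right-hand side increases with $m$. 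Using $m\le k-1$ yields \cref{eq:triple-general-upper}.

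\emph{Main obstacle.} The delicate step is Step 1. One must check that the maximality exchange really produces a larger matching, which needs the new edges to avoid all of $U$. One must also handle the lone-nonempty-graph case, where the matching cap $k-2$ comes from using $M_i$ itself as the $k$-th petal. Both points depend on the edges of $G_x$ lying entirely in $V\setminus U$, which is exactly why the graphs are built only from $\F_1$.
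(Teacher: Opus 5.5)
Your proof is correct and follows essentially the same route as the paper: a maximum matching, pairwise cross-intersecting residue graphs with degree cap $k-1$ and matching cap $k-2$, bounded per matching triple by \cref{lem:triple-cross-graphs} or by $\CH(k-1,k-2)$, and then an incidence count against $h_k$ in which the $m$ matching triples each contribute three. Your split into $\F_1$ and $\F_{\ge2}$ is only a coarsening of the paper's $t_1,t_2,t_3$ bookkeeping and gives the same final inequality.
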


\begin{proof}
Let $\F$ be a $3$-uniform family without a $k$-sunflower such that
$|\F|=f(3,k)$. Choose a maximum matching $M_1,\ldots,M_v$ and put
\[
 T=M_1\cup\cdots\cup M_v.
\]
Equation~\eqref{eq:matching-cap} gives $v\le k-1$. Every member of $\F$ meets
$T$, since a member disjoint from $T$ would extend the matching.

For $j\in\{1,2,3\}$, let
\[
 t_j=|\{F\in\F:|F\cap T|=j\}|.
\]
By \cref{lem:link-bound,lem:small-uniformities}, every point has degree at
most $h_k$. Counting incidences between the members of $\F$ and the $3v$
points of $T$ gives
\begin{equation}
 t_1+2t_2+3t_3\le3v h_k.
 \label{eq:triple-incidence}
\end{equation}
The matching members themselves give
\begin{equation}
 t_3\ge v.
 \label{eq:triple-t3}
\end{equation}

Fix $i$ and write $M_i=\{a,b,c\}$. On the points outside $T$, define
\[
 S_a=\{F\setminus\{a\}:F\in\F,\ F\cap T=\{a\}\},
\]
and define $S_b,S_c$ in the same way. These graphs are pairwise
cross-intersecting. If an edge of $S_a$ and an edge of $S_b$ were disjoint,
their corresponding triples, together with the members $M_j$ for $j\ne i$,
would form a matching of size $v+1$.

Each residue graph has maximum degree at most $k-1$. Indeed, if a point $x$
outside $T$ were incident with $k$ edges $\{x,z_1\},\ldots,\{x,z_k\}$ in
$S_a$, then the corresponding triples
$\{a,x,z_1\},\ldots,\{a,x,z_k\}$ would form a $k$-sunflower with core
$\{a,x\}$. Moreover, the link $\Lk_{\F}(a)$ contains the edge $\{b,c\}$,
which is disjoint from every edge of $S_a$. Thus a matching of size $k-1$ in
$S_a$, together with $\{b,c\}$, would form a matching of size $k$ in the link
and hence a $k$-sunflower with core $\{a\}$. Therefore
\[
 \nu(S_a)\le k-2,
 \qquad
 |E(S_a)|\le\CH(k-1,k-2),
\]
and the same inequalities hold for $S_b$ and $S_c$.

If at most one residue graph is nonempty, their combined size is at most
$\CH(k-1,k-2)$. If at least two are nonempty,
\cref{lem:triple-cross-graphs} bounds their combined size by $3(k-1)$.
Consequently, the members counted by $t_1$ that meet a fixed $M_i$ contribute
at most $c_k$. Every member counted by $t_1$ meets a unique matching triple
$M_i$ in exactly one point and is represented by exactly one edge in one of
its three residue graphs. Summing over the $v$ matching triples therefore
gives
\begin{equation}
 t_1\le v c_k.
 \label{eq:triple-t1}
\end{equation}

Eliminating $t_2$ from \cref{eq:triple-incidence} and using
\cref{eq:triple-t3,eq:triple-t1}, we obtain
\begin{align*}
 |\F|
  &=t_1+t_2+t_3\\
  &\le\frac{3v h_k+t_1-t_3}{2}\\
  &\le\frac v2\bigl(c_k+3h_k-1\bigr)\\
  &\le\frac{k-1}{2}\bigl(c_k+3h_k-1\bigr).
\end{align*}
\end{proof}

\subsection{Numerical consequences}

For $k\ge5$, one has
$\CH(k-1,k-2)\ge(k-1)(k-2)\ge3(k-1)$, so $c_k=\CH(k-1,k-2)$.
\Cref{lem:triple-cross-graphs} controls the case of two or three nonempty
residue graphs. At $k=4$, it gives $c_4=9$, and
\cref{thm:triple-general-upper} yields $f(3,4)\le57$.

\begin{corollary}
\label{cor:triple-upper-values}
The general bound gives
\[
 f(3,5)\le146,
 \qquad
 f(3,6)\le255,
 \qquad
 f(3,7)\le474.
\]
\end{corollary}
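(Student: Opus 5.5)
The corollary is a direct numerical specialization of \cref{thm:triple-general-upper}. For each $k\in\{5,6,7\}$ I will evaluate $h_k$ and $c_k$ from the Chv\'atal--Hanson formula \eqref{eq:CH} and substitute them into \eqref{eq:triple-general-upper}. No further combinatorial argument is needed.

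The first step is to identify $c_k$. The remark preceding the corollary gives, for $k\ge5$,
\[
 \CH(k-1,k-2)\ge(k-1)(k-2)\ge3(k-1),
\]
so $c_k=\CH(k-1,k-2)$. I will still confirm this against the computed values below.

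Next I compute the two Chv\'atal--Hanson values for each $k$, using $\CH(D,q)=Dq+\lfloor D/2\rfloor\lfloor q/\lceil D/2\rceil\rfloor$.
\begin{itemize}
\item For $k=5$: $h_5=\CH(4,4)=16+2\cdot2=20$ and $c_5=\CH(4,3)=12+2\cdot1=14$.
\item For $k=6$: $h_6=\CH(5,5)=25+2\cdot1=27$ and $c_6=\CH(5,4)=20+2\cdot1=22$.
\item For $k=7$: $h_7=\CH(6,6)=36+3\cdot2=42$ and $c_7=\CH(6,5)=30+3\cdot1=33$.
\end{itemize}
Each $c_k$ exceeds $3(k-1)\in\{12,15,18\}$, which confirms the identification of $c_k$.

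Substituting into $\frac{k-1}{2}(c_k+3h_k-1)$ gives the three bounds:
\begin{itemize}
\item $k=5$: $2\cdot(14+60-1)=146$.
\item $k=6$: $\tfrac52\cdot(22+81-1)=255$.
\item $k=7$: $3\cdot(33+126-1)=474$.
\end{itemize}

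There is no real obstacle here. The only care needed is in the floor and ceiling evaluations inside \eqref{eq:CH}, specifically $\lceil D/2\rceil$ for odd $D=5$. For $k=6$ the bound $\tfrac52\cdot102$ happens to be an integer, so no rounding is needed.
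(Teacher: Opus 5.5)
Your proposal is correct and is essentially the paper's proof: it evaluates $h_k$ and $c_k=\CH(k-1,k-2)$ from \eqref{eq:CH} for $k=5,6,7$ and substitutes them into \eqref{eq:triple-general-upper}. Your values $20,14$; $27,22$; $42,33$ and the resulting bounds $146,255,474$ agree exactly with the paper's table.
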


\begin{proof}
Equation~\eqref{eq:CH} gives
\[
\begin{array}{c|ccc|c}
k&h_k&\CH(k-1,k-2)&c_k&
\dfrac{k-1}{2}(c_k+3h_k-1)\\
\hline
5&20&14&14&146\\
6&27&22&22&255\\
7&42&33&33&474.
\end{array}
\]
Apply \cref{thm:triple-general-upper}.
\end{proof}

\section{Bounds for \texorpdfstring{$f(3,4)$}{f(3,4)}}
\label{sec:f34}

This section proves $39\le f(3,4)\le49$. The lower bound is given by an
explicit family of $39$ triples. For the upper bound, we assume that a
$50$-member family without a $4$-sunflower exists and choose a maximum
matching of three triples. We analyze the local structure around these
triples, decompose the family into rooted blocks and cross classes, and reduce
the possible configurations to four numerical profiles. Finite exhaustive
searches then rule out all four profiles.

\subsection{A family of 39 triples}

\begin{theorem}
\label{thm:f34-lower}
One has
\[
 f(3,4)\ge39.
\]
\end{theorem}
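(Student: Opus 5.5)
The plan is to exhibit an explicit family of $39$ triples and verify that it contains no $4$-sunflower. Abbott and Exoo already have $38$, so the family should improve on a recursive template by one member. I would use the structure of \cref{thm:block-size} with $w=3$, $k=4$, $m=3$, so that $t=2$. That recurrence gives
\[
 \psi(3,4,3)\ge 3\psi(2,4,3)+f(1,4)=3\psi(2,4,3)+3.
\]
Here $\psi(2,4,3)$ is the maximum number of edges in a graph with maximum degree at most $3$ and matching number at most $2$, which is $\CH(3,2)=6+\lfloor 3/2\rfloor\lfloor 2/2\rfloor=7$. This yields $24$ triples with matching number at most $2$.

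For the intersecting part I would use \cref{eq:star} with the bound $\psi(3,4,2)\ge f(2,4)=\CH(3,3)=9+1\cdot\lfloor3/2\rfloor=10$. The split \cref{eq:split} then gives only $34$. So a direct application of the earlier results is insufficient, and the construction must be genuinely finite and hand-tuned. I would look for an overlapping-support family rather than a disjoint union. One option is to start from the Abbott--Exoo $38$-family and find a triple to add. Another is to search, with a computer, for a vertex-transitive family under a cyclic group acting on roughly $9$ to $13$ points, taking unions of orbits of triples. Since $39=3\cdot 13$, a $\Z_{13}$-invariant family made of three orbits of triples is a natural first guess.

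The proof itself will list the $39$ triples explicitly, for example as orbits of base triples under $x\mapsto x+1 \bmod 13$ if a cyclic solution exists. It will then certify sunflower-freeness by checking the three possible core sizes separately. For core size $2$, every pair must have degree at most $3$. For core size $1$, the link of each point must be a graph with maximum degree at most $3$ (already implied by the previous check) and matching number at most $3$; this means verifying, via \cref{lem:link-bound}, that no link contains a $4$-matching. For core size $0$, one must check $\nu(\F)\le3$, that is, that no $4$ pairwise disjoint triples exist. Symmetry reduces the point-link checks to a single point, and the pair-degree checks to pair-orbit representatives.

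I expect the main obstacle to be finding the witness, not verifying it. The verification is a finite check that is routine by computer and is Lean-checked per \cref{sec:lean}. If no $\Z_{13}$-invariant family works, I would fall back to a local search, such as simulated annealing over triples on about $12$ points with a penalty for $4$-sunflowers, and present the resulting list directly.
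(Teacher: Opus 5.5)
Your three checks (pair degrees at most $3$, every vertex link with matching number at most $3$, and $\nu\le3$) are the right certificate; they are exactly how the paper verifies its witness. The gap is that you never produce a family to apply them to. The theorem is a pure existence statement, so the explicit family \emph{is} the proof. What you give is a search plan: three $\Z_{13}$-orbits, adding a triple to the Abbott--Exoo family, or annealing. Nothing in it shows that any of these searches succeeds, so it does not establish $f(3,4)\ge39$. Your opening computation is correct, but as you note the recurrence and split reach only $34$.

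The cyclic guess in particular is not a safe bet. Every $39$-member family on $\Z_{13}$ that is invariant under all maps $x\mapsto 3x+t$ fails; there are ten of these, six single orbits of size $39$ and four unions of three orbits of size $13$. In each one the link of $0$ has matching number at least $4$, which gives a $4$-sunflower with a one-point core. The paper's witness also lives on $13$ points, but it is not vertex-transitive, so your planned reduction to a single point link would not apply to it either.

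What is missing is a concrete construction. The paper takes pairwise disjoint triples $V_1,V_2,V_3$ and four further points $a,\ell_{12},\ell_{13},\ell_{23}$. The family consists of:
\begin{itemize}
\item the $3$ triples $V_i$;
\item the $9$ triples $\{a\}\cup P$ with $P$ a pair inside some $V_i$;
\item the $27$ triples $\{\ell_{ij},x,y\}$ with $x\in V_i$, $y\in V_j$.
\end{itemize}
Every pair has degree at most $3$, with equality only for pairs $\{\ell_{ij},x\}$ with $x\in V_i\cup V_j$. The links have the following forms:
\begin{itemize}
\item at $a$, three disjoint triangles;
\item at $\ell_{ij}$, the complete bipartite graph $K_{3,3}$ between $V_i$ and $V_j$;
\item at $x\in V_i$, a triangle on $\{a\}\cup(V_i\setminus\{x\})$ plus two disjoint three-edge stars, centred at the two points $\ell$ whose index contains $i$.
\end{itemize}
Each of these links has matching number $3$. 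Since the link at $a$ is not isomorphic to the link at $\ell_{12}$, the family is not vertex-transitive.

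Finally, $\nu\le3$ follows by casework on how many $V_i$ a matching uses. The decisive case is a matching using no $V_i$. Then four disjoint members must contain $a,\ell_{12},\ell_{13},\ell_{23}$ once each. The group $V_i$ that supplies two points to the $a$-member must also supply one point to each of the two $\ell$-members indexed by $i$. That needs four points in a three-point set, which is impossible. Once this family is in hand, your three checks go through exactly as you describe.
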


\begin{proof}
Let $V_1,V_2,V_3$ be pairwise disjoint sets of size $3$. Take four further
points $a,\ell_{12},\ell_{13},\ell_{23}$. Define
\begin{align*}
 \mathcal G&=\{V_1,V_2,V_3\},\\
 \mathcal P&=\bigl\{\{a\}\cup P:
        P\in\tbinom{V_i}{2},\ 1\le i\le3\bigr\},\\
 \mathcal Q&=\bigl\{\{\ell_{ij},x,y\}:
        1\le i<j\le3,\ x\in V_i,\ y\in V_j\bigr\}.
\end{align*}
Put $\F_{39}=\mathcal G\cup\mathcal P\cup\mathcal Q$. These three families
are disjoint and
\[
 |\F_{39}|=3+3\binom32+3\cdot3^2=39.
\]

We exclude the three possible core sizes. Every pair has degree at most $3$.
The pairs $\{\ell_{ij},x\}$ with $x\in V_i\cup V_j$ have degree $3$; all
other pair types have degree at most $2$. Thus there is no $4$-sunflower
with a two-point core.

The link at $a$ is the disjoint union of three triangles. The link at
$\ell_{ij}$ is the complete bipartite graph between $V_i$ and $V_j$. For
$x\in V_i$, its link is the disjoint union of a triangle on
$\{a\}\cup(V_i\setminus\{x\})$ and two three-edge stars. The stars have
centres $\ell_{ij}$ and leaves $V_j$, for the two indices $j\ne i$. Each of
these link graphs has matching number $3$. Hence there is no $4$-sunflower
with a one-point core.

It remains to exclude four disjoint members. Let $g$ be the number of group
triples $V_i$ in a matching. If $g=3$, no further member is disjoint from all
three groups. If $g=2$, every remaining member uses points of the third
group. The only possibilities are the third group triple and the three
members of $\mathcal P$ supported on that group. These four members are
pairwise intersecting, so at most one can be added.

Suppose that $g=1$, say $V_1$ is used. Among members avoiding $V_1$, the only
special points are $a$ and $\ell_{23}$, apart from the two group triples
$V_2,V_3$. If one of those group triples is used, at most one further member
can be disjoint from it. If neither is used, at most two members can be
chosen because every available member contains $a$ or $\ell_{23}$. Thus the
matching has at most three members.

Finally suppose that $g=0$ and that four non-group members are disjoint.
They must use the four special points
$a,\ell_{12},\ell_{13},\ell_{23}$ exactly once. The member containing $a$
uses two points of some group, say $V_1$. The members containing
$\ell_{12}$ and $\ell_{13}$ use two further distinct points of $V_1$. This
would require four points in $V_1$, a contradiction. Hence
$\nu(\F_{39})\le3$, so no $4$-sunflower has empty core.
\end{proof}

\subsection{Local structure around a triple}

The upper bound follows from finite enumeration. The next lemmas give the
mathematical reduction and define the objects that are enumerated.

\begin{lemma}[Degree bounds]
\label{lem:f34-degrees}
Let $\F$ be a $3$-uniform family without a $4$-sunflower. Every pair has
degree at most $3$, and every point has degree at most $10$.
\end{lemma}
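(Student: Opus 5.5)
This follows directly from \cref{lem:link-bound} and \cref{lem:small-uniformities}, applied with $w=3$ and $k=4$.

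For a pair $S$ we have $|S|=2<3$, so \cref{eq:link-bound} gives $d_{\F}(S)\le f(1,4)$. By \cref{eq:f1-f2}, $f(1,4)=3$. This can also be seen directly: the link of a pair is a family of distinct singletons, and four of them would give a $4$-sunflower whose core is that pair.

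For a point $x$, the link $\Lk_{\F}(\{x\})$ is a graph with no $4$-sunflower. Hence \cref{eq:link-bound} and \cref{eq:f1-f2} give
\[
 d_{\F}(x)\le f(2,4)=\CH(3,3).
\]
It remains to evaluate \cref{eq:CH} with $D=q=3$. Here $\lfloor D/2\rfloor=1$ and $\lceil D/2\rceil=2$, so $\lfloor q/2\rfloor=1$. This gives
\[
 \CH(3,3)=3\cdot3+1\cdot1=10.
\]

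No step is a real obstacle, since the argument only invokes results already proved. The one point that needs care is the arithmetic in the Chv\'atal--Hanson formula. As a sanity check, the value $10$ is attained by two disjoint copies of $K_4$ minus a perfect matching plus appropriate edges. For an upper bound alone, correctness only requires the formula, which is taken as given.
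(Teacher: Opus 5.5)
Your proof is correct and is essentially the paper's argument: four triples through a pair form a sunflower with that pair as core, and the link of a point is a graph with maximum degree and matching number at most $3$, so $d_{\F}(x)\le\CH(3,3)=10$. Routing these bounds through $f(1,4)$ and $f(2,4)$ via \cref{lem:small-uniformities} amounts to the same thing.

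Your tightness remark is wrong, though, and should be dropped or replaced. Two disjoint copies of $K_4$ minus a perfect matching already contain four disjoint edges, and adding edges cannot remove them. A correct extremal link is any $7$-vertex graph with maximum degree $3$ and $10$ edges, for example the one given by \cref{lem:bounded-degree-graph} with $n=7$ and $d=3$.
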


\begin{proof}
Four distinct triples containing the same pair form a sunflower with that pair
as core. For a point $x$, the link graph $\Lk_{\F}(x)$ has maximum degree and
matching number at most $3$. Equation~\eqref{eq:CH} gives
\[
 d_{\F}(x)=|E(\Lk_{\F}(x))|\le\CH(3,3)=10.
\]
\end{proof}

Fix $S=\{s_1,s_2,s_3\}\in\F$. For $s\in S$, define the exact-singleton
petal graph
\[
 L'_s(S)=\{F\setminus\{s\}:F\in\F,\ F\cap S=\{s\}\}.
\]

\begin{lemma}[Exact-singleton links]
\label{lem:f34-singlelink}
For every $s\in S$,
\[
 \Delta(L'_s(S))\le3,
 \qquad
 \nu(L'_s(S))\le2,
 \qquad
 |E(L'_s(S))|\le7.
\]
\end{lemma}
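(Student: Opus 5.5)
The plan is to mirror the residue-graph argument in the proof of \cref{thm:triple-general-upper}, specialised to $k=4$, and then to evaluate the Chv\'atal--Hanson formula \cref{eq:CH}.

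First, the degree bound. Suppose a point $x\notin S$ has degree $4$ in $L'_s(S)$, with edges $\{x,z_1\},\ldots,\{x,z_4\}$. The corresponding triples $\{s,x,z_i\}$ are distinct members of $\F$ that all contain the pair $\{s,x\}$. They form a $4$-sunflower with core $\{s,x\}$. Equivalently, the pair $\{s,x\}$ would have degree $4$, contradicting \cref{lem:f34-degrees}. Note that every vertex of $L'_s(S)$ lies outside $S$, because $F\cap S=\{s\}$.

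Second, the matching bound. Write $S=\{s,s',s''\}$. The link $\Lk_{\F}(s)$ contains the edge $\{s',s''\}$, since $S\in\F$. Every edge of $L'_s(S)$ is also an edge of $\Lk_{\F}(s)$ and avoids $S$, so it is disjoint from $\{s',s''\}$. A matching of size $3$ in $L'_s(S)$, together with $\{s',s''\}$, would therefore be a matching of size $4$ in $\Lk_{\F}(s)$. By \cref{lem:link-bound}, or directly by adjoining $s$, this gives a $4$-sunflower in $\F$ with core $\{s\}$. Hence $\nu(L'_s(S))\le2$.

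Finally, the edge count follows from \cref{eq:CH} with $D=3$ and $q=2$:
\[
 \CH(3,2)=6+\left\lfloor\tfrac32\right\rfloor\left\lfloor\tfrac{2}{2}\right\rfloor=7.
\]
This gives $|E(L'_s(S))|\le7$.

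No step here is a real obstacle. The only point needing care is to check that the edge $\{s',s''\}$ is genuinely disjoint from the petal graph, which holds precisely because $L'_s(S)$ uses the exact-singleton condition $F\cap S=\{s\}$.
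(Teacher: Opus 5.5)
Your proposal is correct and follows essentially the same route as the paper: the degree bound from the pair-degree bound of \cref{lem:f34-degrees}, the matching bound by adding the edge $S\setminus\{s\}$ to three disjoint petals to get a four-matching in $\Lk_{\F}(s)$, and then $\CH(3,2)=7$ from \cref{eq:CH}.
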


\begin{proof}
The degree bound follows from \cref{lem:f34-degrees}. Three disjoint edges of
$L'_s(S)$, together with the edge $S\setminus\{s\}$ in $\Lk_{\F}(s)$, would
give a four-matching in this link. Thus $\nu(L'_s(S))\le2$, and
\cref{eq:CH} gives $\CH(3,2)=7$.
\end{proof}

In the maximum-matching decomposition used below, the members associated
exclusively with one matching triple form an intersecting family containing
that triple; the following definition records this structure.

A rooted block is an intersecting, $3$-uniform family $\A$ without a
$4$-sunflower, together with a distinguished member $T\in\A$. Put
\[
 p(\A,T)=|\{A\in\A\setminus\{T\}:|A\cap T|=2\}|.
\]

\begin{lemma}[Rooted blocks]
\label{lem:f34-block}
Every intersecting, $3$-uniform family without a $4$-sunflower has at most
$10$ members \cite[Proposition~6.3]{FranklWang2025}. The rooted
isomorphism-class counts used below are
\begin{center}
\begin{tabular}{@{}lr@{}}
\toprule
rooted block list & classes\\
\midrule
$|\A|=8$ and $p(\A,T)=0$ & $45$\\
$|\A|=9$ and $p(\A,T)=0$ & $27$\\
all rooted blocks with $|\A|=9$ & $665$\\
all rooted blocks with $|\A|=10$ & $114$\\
\bottomrule
\end{tabular}
\end{center}
\end{lemma}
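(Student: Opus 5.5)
The first sentence of \cref{lem:f34-block} is quoted from Frankl and Wang, so I would not reprove it. Still, a sanity check is easy. Take $T$ in an intersecting family $\A$ without a $4$-sunflower. Every member meets $T$, so $|\A|\le1+\sum_{s\in T}|E(L'_s(T))|+(\text{members meeting }T\text{ in two points})$. The three pairs of $T$ each have degree at most $3$ by \cref{lem:f34-degrees}, so at most $6$ members other than $T$ meet $T$ in two points. The exact-singleton graphs are pairwise cross-intersecting. Since $\A$ is intersecting, each is itself an intersecting graph: a star or a subgraph of a triangle. Trading these contributions off against one another, as in the proof of \cref{lem:triple-cross-graphs}, should recover the bound $10$. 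The substantive content of the lemma, however, is the table of rooted isomorphism-class counts, and that is a finite enumeration.

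\textbf{Reduction to a finite search.} I would put $T=\{1,2,3\}$ and bound the support. Every member meets $T$, and the point degrees are at most $10$, so at most $27$ members besides $T$ exist. Each introduces at most two new points, so the support has size at most $3+2\cdot 9=21$; the extremal sizes give a sharper bound. Rather than search inside a fixed ground set, I would build the family by canonical augmentation. Start from the rooted block $\{T\}$ and add triples one at a time, each new triple meeting every current member. New points are labelled canonically: a triple may use only the existing support plus fresh points with the next unused labels. After each addition I would check three conditions. First, every pair has degree at most $3$. Second, every point link has matching number at most $3$; together with the intersecting property, which rules out empty cores, this excludes $4$-sunflowers. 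Third, the family is intersecting.

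\textbf{Counting classes.} I would prune by rooted isomorphism, meaning isomorphism of the hypergraph with $T$ fixed setwise. Each intermediate family is reduced to a canonical form, for example by computing a canonical labelling of the bipartite point--member incidence graph with $T$ coloured. Sizes $8$, $9$ and $10$ are recorded, together with $p(\A,T)$, and the distinct canonical forms in the four requested lists are counted. The enumeration also confirms that no block has $11$ members, which independently checks the Frankl--Wang bound. As a cross-check, I would run an independent second enumeration, for instance an orderly generation by members sorted lexicographically, and confirm that the $45,27,665,114$ counts agree.

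\textbf{Main obstacle.} The main difficulty is completeness of the augmentation. Every rooted block must be reachable from $\{T\}$ by adding members one at a time while keeping the family intersecting and sunflower-free. This holds because both properties are hereditary, so every subfamily containing $T$ is again a rooted block. The canonical labelling must also be correct, with $T$ fixed setwise rather than pointwise, so that classes are neither merged nor split. I would certify the counts by recomputing the automorphism groups and checking orbit-count consistency, that is, that the numbers of labelled extensions agree with the class counts weighted by their automorphism groups.
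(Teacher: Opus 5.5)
Your proposal is correct and follows essentially the paper's route. The size bound is cited from Frankl--Wang. The class counts, together with an independent check that no $11$-member block exists, come from an exhaustive enumeration of rooted blocks up to isomorphisms fixing the anchor setwise, with the same validity tests (distinctness, intersecting, no $4$-sunflower). The only difference is the isomorph-rejection mechanism. The paper uses orderly generation: members are adjoined in lexicographic order, outside labels follow the first-appearance rule, and canonicalization under anchor and outside-support permutations comes at the end. You use level-wise canonical augmentation, and completeness in your version rests on the heredity you point out.

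One correction to your side remark. The exact-singleton graphs $L'_s(T)$ need not be intersecting, because two members meeting $T$ exactly in $s$ already meet at $s$; for example, $\{s,a,b\}$ and $\{s,c,d\}$ can both occur. What does hold is pairwise cross-intersection between the graphs for distinct $s$, together with $\nu(L'_s(T))\le2$. Your sanity sketch for the bound $10$ therefore starts from a false premise. Nothing in your enumeration depends on it.
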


\begin{proof}[Computer-assisted proof]
The cited result gives the size bound. The enumeration below independently
recovers it and establishes the rooted class counts.
Relabel the anchor as $\{0,1,2\}$. Adjoin the other members in lexicographic
order. Outside labels obey the first-appearance rule: a new outside label is
the least unused label. This preserves one representative of every rooted
isomorphism type. To see this, choose a labelling with least sorted member
list. If a larger label appeared before a smaller unused label, interchanging
the two would decrease the list.

A block with $m$ members uses at most $2(m-1)$ outside points. The
enumeration at sizes $8,9,10,11$ therefore needs at most $14,16,18,20$
outside labels, respectively. The nodes of the enumeration tree are partial
rooted blocks. A node is rejected exactly when it contains a repeated
triple, a disjoint pair, or a $4$-sunflower. Canonicalization
permits every anchor permutation and every permutation of the outside
support, and gives the four class counts in the statement. No valid node
reaches size $11$, while valid size-$10$ blocks exist.
\end{proof}

Define the closed meeting neighborhood of $S$ by
\[
 \mathcal N_{\F}[S]=\{F\in\F:F\cap S\ne\varnothing\}.
\]
For $e\in\binom S2$, let $n_e$ be the number of members other than $S$
containing $e$. Put
\[
 s=\sum_{e\in\binom S2}n_e,
 \qquad
 \mu_x=\sum_{\substack{e\in\binom S2\\x\in e}}n_e
 \quad(x\in S).
\]
By \cref{lem:f34-degrees,lem:f34-singlelink},
\[
 0\le n_e\le2,
 \qquad
 |E(L'_x(S))|\le\min\{7,9-\mu_x\}.
\]
Each member meeting $S$ in two points contributes to $\mu_x$ at both of
those points, so
\[
 \sum_{x\in S}\mu_x=2s.
\]
Consequently,
\begin{equation}
\begin{aligned}
 |\mathcal N_{\F}[S]|
  &=1+s+\sum_{x\in S}|E(L'_x(S))|\\
  &\le1+s+\min\{21,27-2s\}\le25.
\end{aligned}
\label{eq:f34-neighborhood25}
\end{equation}

\begin{lemma}[Neighborhood equality]
\label{lem:f34-rigidity}
Equality in \cref{eq:f34-neighborhood25} is possible only when
\[
 n_e=1\quad(e\in\tbinom S2),
 \qquad
 |E(L'_x(S))|=7,
 \qquad
 d_{\F}(x)=10\quad(x\in S).
\]
The three members meeting $S$ in two points then have a common third point
$y$, and $y$ is outside every graph $L'_x(S)$.
\end{lemma}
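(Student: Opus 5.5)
Suppose $|\mathcal N_{\F}[S]|=25$. The plan is to trace equality back through the two inequalities in \cref{eq:f34-neighborhood25} and then use the degree and link constraints from \cref{lem:f34-degrees,lem:f34-singlelink} to force the stated configuration.

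\textbf{Step 1: pinning down $s$.} The function $1+s+\min\{21,27-2s\}$ equals $22+s$ for $s\le3$ and $28-s$ for $s\ge3$. Hence the value $25$ is attained only at $s=3$. Equality in the first line also forces
\[
 \sum_{x}|E(L'_x(S))|=\min\{21,27-2s\}=21.
\]
Since each term is at most $7$, every $|E(L'_x(S))|=7$. Combining this with $|E(L'_x(S))|\le 9-\mu_x$ gives $\mu_x\le2$ for every $x$. But $\sum_x\mu_x=2s=6$, so $\mu_x=2$ for all $x$.

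\textbf{Step 2: each $n_e=1$.} Write $\mu_{s_i}=n_{e}+n_{e'}$ for the two pairs $e,e'$ through $s_i$. The three equations $\mu_x=2$ form a linear system over the three pair-values $n_{12},n_{13},n_{23}$. Its only solution is $n_e=1$ for each pair, since the system is invertible: $n_{12}=(\mu_1+\mu_2-\mu_3)/2=1$.

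\textbf{Step 3: $d_{\F}(x)=10$.} The members containing $x$ are $S$ itself, the $\mu_x=2$ members meeting $S$ in a pair through $x$, and the $7$ edges of $L'_x(S)$. These total $1+2+7=10$, which is the maximum allowed by \cref{lem:f34-degrees}.

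\textbf{Step 4: the common point $y$ (the main obstacle).} Let the three pair-members be $\{s_1,s_2,y_3\}$, $\{s_1,s_3,y_2\}$ and $\{s_2,s_3,y_1\}$, with each $y_i\notin S$.

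Consider the link $\Lk_{\F}(s_1)$. It has exactly $10=\CH(3,3)$ edges, maximum degree at most $3$ and matching number at most $3$. It consists of the edge $s_2s_3$, the edges $s_2y_3$ and $s_3y_2$, and the $7$ edges of $L'_{s_1}(S)$, which avoid $S$.

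The plan is to exploit the rigidity of $L'_{s_1}(S)$. It has $7=\CH(3,2)$ edges with $\Delta\le3$ and $\nu\le2$. I would first invoke (or check by hand) the characterization of the extremal Chv\'atal--Hanson graphs: such a graph is a $K_4$ plus a disjoint triangle, or another small list.

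\emph{Showing $y_2=y_3=y_1$.} Suppose $y_2\ne y_3$. Then $s_2y_3$, $s_3y_2$ and a suitable matching of size $2$ in $L'_{s_1}(S)$ avoiding $y_2,y_3$ should give a $4$-matching in the link of $s_1$. That would be a $4$-sunflower with core $\{s_1\}$.

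\emph{Showing $y\notin L'_x(S)$.} Suppose $y$ is a vertex of $L'_{s_1}(S)$. Then its degree in the link of $s_1$ becomes $2+\deg_{L'}(y)$. One then argues that either this exceeds $3$, or removing $y$ still leaves a $2$-matching in $L'$. In the latter case that $2$-matching, together with $s_2y$ and $s_3\,\cdot$, again gives a $4$-matching. The same argument applies at $s_2$ and $s_3$ by symmetry.

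The delicate part is guaranteeing that a $2$-matching of $L'_{s_1}(S)$ exists avoiding one or two prescribed vertices. I would handle this by case analysis on the finite list of $7$-edge graphs with $\Delta\le3$ and $\nu\le2$, falling back on a brief computer check if needed.
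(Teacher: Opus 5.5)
Your Steps 1--3 are correct and agree with the paper. The gap is in Step 4. First, the extremal graph you name is wrong: $K_4\mathbin{\dot\cup}K_3$ has nine edges and matching number $3$. The only seven-edge graph with $\Delta\le3$ and $\nu\le2$ is $R=K_5-(P_3\mathbin{\dot\cup}K_2)$. It has five vertices and degree sequence $(3,3,3,3,2)$; its uniqueness needs a short argument, for example by examining the edges at the four endpoints of a maximum two-matching. This matters because your mechanism for proving $y_2=y_3$ fails in one case. Suppose $y_2\ne y_3$ and both are vertices of $L'_{s_1}(S)\cong R$. Then $R-\{y_2,y_3\}$ has only three vertices and contains no two-matching. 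No case analysis or computer check can supply the four-matching you want there.

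The missing idea is the pair-degree bound, which you use only in the second half. The pair $\{s_1,y_2\}$ already lies in the member $\{s_1,s_3,y_2\}$. So at most two edges of $L'_{s_1}(S)$ contain $y_2$, that is, $\deg_R(y_2)\le2$, and likewise $\deg_R(y_3)\le2$. Since $R$ has a unique vertex of degree $2$, $y_2$ and $y_3$ cannot both be vertices of $R$. If at most one of them is a vertex of $R$, you need the fact that $R-v$ has a two-matching for every vertex $v$. Then $s_2y_3$, $s_3y_2$ and that two-matching give the forbidden four-matching in $\Lk_{\F}(s_1)$, as you intended.

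With $R$ correctly identified, your argument that $y\notin V(L'_{s_1}(S))$ works using only its first alternative. $R$ has minimum degree $2$, so $\deg_R(y)\ge2$ would give the pair $\{s_1,y\}$ degree at least $4$. The second alternative would not work anyway, since $s_2y$ and $s_3y$ share $y$.
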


\begin{proof}
For integral $0\le s\le6$, the last expression in
\cref{eq:f34-neighborhood25} is uniquely maximized at $s=3$. Equality forces
all three singleton links to have seven edges and all $\mu_x$ to equal $2$.
The three equations for the $n_e$ give $n_e=1$ for every internal pair.

Up to isomorphism, the unique seven-edge graph with maximum degree at most
$3$ and matching number at most $2$ is
\[
 R=K_5-(P_3\mathbin{\dot\cup}K_2).
\]
Its degree sequence is $(3,3,3,3,2)$, and $R-v$ has a two-matching for every
vertex $v$. This follows by listing the edges incident with the four
endpoints of a maximum two-matching and applying the degree and matching
bounds.

Write the three two-point members as
\[
 \{s_2,s_3,y_1\},\qquad
 \{s_1,s_3,y_2\},\qquad
 \{s_1,s_2,y_3\}.
\]
If $y_2\ne y_3$, the two corresponding edges in $\Lk_{\F}(s_1)$ are
disjoint. Deleting $y_2,y_3$ from $L'_{s_1}(S)\cong R$ must then destroy
every two-matching. If, say, $y_2$ were not a vertex of $R$, deleting
$y_2,y_3$ would remove at most one vertex of $R$, and the remaining graph
would still contain a two-matching. Hence both $y_2$ and $y_3$ belong to
$R$. Since each already occurs with $s_1$ in one of the two displayed
members, the pair-degree bound gives degree at most two in $R$. They must
therefore both be the unique degree-two vertex of $R$, a contradiction.
Hence $y_2=y_3$, and symmetry gives $y_1=y_2=y_3=:y$.

The pair $\{s_1,y\}$ already occurs in two displayed members. If $y$ were a
vertex of $L'_{s_1}(S)$, its degree there would be at most $1$, whereas $R$
has minimum degree $2$. Thus $y\notin V(L'_{s_1}(S))$, and the same holds at
the other two points of $S$.
\end{proof}

\subsection{The maximum-matching decomposition}

\begin{lemma}
\label{lem:f34-nu2}
If $\F$ is $3$-uniform, has no $4$-sunflower, and $\nu(\F)\le2$, then
$|\F|\le35$. Equivalently,
\[
 \psi(3,4,3)\le35.
\]
\end{lemma}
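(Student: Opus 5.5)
The plan is to run the maximum-matching count from the proof of \cref{thm:triple-general-upper} with $k=4$ and $v\le2$, and then use the local rigidity of \cref{lem:f34-rigidity} to remove the last few units of slack. If $\nu(\F)\le1$, the family is intersecting, so $|\F|\le10$ by \cref{lem:f34-block}. Assume therefore that $\nu(\F)=2$. Fix a maximum matching $S_1,S_2$ and put $T=S_1\cup S_2$, so $|T|=6$. Every member meets $T$. With $t_j$ defined as before, \cref{lem:f34-degrees} gives
\[
 t_1+2t_2+3t_3=\sum_{x\in T}d_{\F}(x)\le60,\qquad t_3\ge2 .
\]
The residue graphs at the three points of each $S_i$ are pairwise cross-intersecting. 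Each has maximum degree at most $3$ and matching number at most $2$. So \cref{lem:triple-cross-graphs} together with $\CH(3,2)=7$ gives $t_1\le2c_4=18$. Eliminating $t_2$ yields $|\F|\le(60+18-2)/2=38$. The task is to improve this by $3$.

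The first refinement is to tie $t_1$ to the degrees. Write $D=60-\sum_{x\in T}d_{\F}(x)$ for the degree deficit and $E=18-t_1$ for the residue deficit. Then
\[
 |\F|\le\frac{76-D-E-(t_3-2)}{2},
\]
so it suffices to prove $D+E+(t_3-2)\ge 6$, and the proof reduces to a small set of numerical profiles. I would separate the residue configurations at each $S_i$ into two regimes.

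\emph{First regime: all residue mass at a single point $a\in S_i$.} Then $t_1$ contributes at most $7$ from $S_i$, which already costs $E\ge2$ at that triple. Moreover, if that graph has $7$ edges it is the rigid graph $R$ of \cref{lem:f34-rigidity}.

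\emph{Second regime: at least two residue graphs at $S_i$ are nonempty.} Reaching the value $9$ forces the extremal configurations of \cref{lem:triple-cross-graphs}: a common star centre carrying $3$ edges in each graph, or a triangle repeated three times. In the star case, the three edges $\{x,s\}$ with $s\in S_i$ have pair degree $3$ coming from $t_1$ members alone. Combined with the pair-degree cap $3$, this leaves no room for members meeting $S_i$ in two points through these points. I expect it to force $d_{\F}(s)<10$ at some $s\in S_i$. Showing that each such configuration costs about $3$ in $D$ per triple gives $D+E\ge6$.

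A second source of savings comes from $\nu(\F)=2$ itself, and it is the replacement argument. Suppose an edge of a residue graph at $S_1$ is disjoint from a $t_2$- or $t_3$-member, or from a different edge, in such a way that $S_1$ can be swapped out. This produces a $3$-matching. So the members meeting $S_2$ in two points restrict the residue graphs at $S_1$, and symmetrically at $S_2$. I would also use \cref{eq:f34-neighborhood25}: $|\mathcal N_{\F}[S_i]|\le25$, with equality only in the rigid configuration of \cref{lem:f34-rigidity}. That configuration has all three singleton links equal to $R$ on $7$ edges, hence only one nonempty-graph regime. In it, the $R$-graphs at different points of $S_i$ must be pairwise cross-intersecting, or they would give a $3$-matching with $S_{3-i}$. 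Three cross-intersecting copies of $R$, each with matching number $2$, should be impossible, by the same cross-intersection analysis as in \cref{lem:triple-cross-graphs}.

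The main obstacle is closing the final gap from $38$ down to $35$ uniformly across the mixed profiles. These are cases where one triple is in the star regime and the other in the single-point regime, with $t_3$ slightly above $2$. I would first try the purely hand argument above, enumerating the finitely many deficit vectors $(D,E,t_3-2)$ with sum at most $5$. If some profile resists, I would fall back on the method of \cref{lem:f34-block}. That means a canonical-labelling enumeration of the $6$-point set $T$ together with the members meeting it in at least two points, extended by residue graphs on an outside set of at most $14$ labels. Nodes would be pruned on repeated triples, $4$-sunflowers, and $3$-matchings. The goal is to confirm that no valid node reaches $36$ members.
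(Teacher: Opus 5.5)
Your argument rigorously reaches only $|\F|\le38$, and the last three units are not proved. The steps meant to supply them are stated as expectations (``I expect it to force $d_{\F}(s)<10$'', ``should be impossible''), or deferred to an enumeration whose search space and coverage you do not specify.

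\begin{itemize}
\item \emph{Star case.} The pair-degree cap on $\{x,s\}$ only rules out further members containing both $x$ and $s$. It places no limit on members $\{s,s',u\}$ with $u\ne x$, so nothing you have shown forces a degree deficit on $S_i$.
\item \emph{Rigid case.} Your claim here is false as stated. The graphs $L'_x(S_i)$ also contain petals of members that meet $S_{3-i}$, and two disjoint such edges give no $3$-matching with $S_{3-i}$. Only the residue graphs on points outside $T$ are forced to be cross-intersecting, so \cref{lem:triple-cross-graphs} does not apply to the three copies of $R$.
\end{itemize}

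The missing idea is that the bound $t_1\le18$ discards two constraints that carry the whole argument. Let $m,p,q,r$ count the non-anchor members of types $(2,1),(2,0),(1,1),(1,0)$ with respect to $(S_1,S_2)$, in both orientations. Your incidence count is then $3m+2p+2q+r\le54$.

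\begin{itemize}
\item Summing $|E(L'_s(S_i))|\le7$ from \cref{lem:f34-singlelink} over all six anchor points also counts the cross members. This gives $m+2q+r\le42$.
\item The members missing $S_{3-i}$, including those meeting $S_i$ in two points, form an intersecting family. By \cref{lem:f34-block} it has at most $10$ members, which gives $p+r\le18$. This is stronger than your $r=t_1\le18$.
\end{itemize}

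Adding one quarter of each of the first two inequalities and one half of the third gives $m+p+q+r\le33$, hence $|\F|\le35$. This is exactly how the paper proves the lemma.

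You also already had the ingredients for an even shorter argument. For any $S\in\F$, the members disjoint from $S$ are pairwise intersecting, since two disjoint ones together with $S$ would form a $3$-matching. Hence there are at most $10$ of them. Meanwhile $|\mathcal N_{\F}[S]|\le25$ by \cref{eq:f34-neighborhood25}, so $|\F|\le25+10=35$.
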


\begin{proof}
The case $\nu(\F)\le1$ follows from \cref{lem:f34-block}. Otherwise fix a
maximum matching $T_1,T_2$. Apart from the anchors, classify a member by its
intersection sizes with $(T_1,T_2)$. Let $m,p,q,r$ count, in both
orientations, the types
\[
 (2,1),\qquad(2,0),\qquad(1,1),\qquad(1,0),
\]
respectively. These types exhaust the family, so
\[
 |\F|=2+m+p+q+r.
\]
Summing degrees over the six anchor points gives
\begin{equation}
 3m+2p+2q+r\le54.
 \label{eq:f34-nu2-degree}
\end{equation}
Indeed, the six points have total degree at most $60$, while the two anchors
themselves contribute six incidences. Among the other members, the four
types contribute $3,2,2,1$ anchor incidences, respectively.
Summing the exact-singleton link bound over the same points gives
\begin{equation}
 m+2q+r\le42.
 \label{eq:f34-nu2-links}
\end{equation}
There are six such links, each with at most seven edges; a member of type
$m,q,r$ is counted once, twice, or once, respectively, while a member of type
$p$ is not counted.
For $i=1,2$, the members missing the other anchor form an intersecting rooted
block. Applying \cref{lem:f34-block} to both blocks gives
\begin{equation}
 p+r\le18.
 \label{eq:f34-nu2-blocks}
\end{equation}
Each block has at most ten members and includes its anchor, so the two blocks
contain at most eighteen non-anchor members in total.
One quarter of each of the first two inequalities and one half of the third
give
\[
 m+p+q+r\le\frac{54}{4}+\frac{42}{4}+\frac{18}{2}=33.
\]
Thus $|\F|\le35$.
\end{proof}

Assume for a contradiction that $\F$ has $N=50$ members and no
$4$-sunflower. A matching of four triples would be a $4$-sunflower with
empty core, so $\nu(\F)\le3$. By \cref{lem:f34-nu2}, the inequality
$\nu(\F)\le2$ would imply $|\F|\le35$. Thus $\nu(\F)=3$. Fix a maximum
matching $T_1,T_2,T_3$. For $\{i,j,k\}=\{1,2,3\}$, define
\begin{align*}
 \F_i&=\{F\in\F:F\cap T_i\ne\varnothing,
              \ F\cap T_j=F\cap T_k=\varnothing\},&a_i&=|\F_i|,\\
 \C_i&=\{F\in\F:F\cap T_i=\varnothing,
              \ F\cap T_j\ne\varnothing,
              \ F\cap T_k\ne\varnothing\},&c_i&=|\C_i|,\\
 \mathcal Z&=\{F\in\F:F\cap T_1,F\cap T_2,F\cap T_3
              \text{ are all nonempty}\},&z&=|\mathcal Z|.
\end{align*}
These classes partition $\F$ and
\begin{equation}
 N=\sum_{i=1}^3a_i+\sum_{i=1}^3c_i+z.
 \label{eq:f34-partition}
\end{equation}
Each $\F_i$ is an intersecting rooted block: two disjoint members, together
with $T_j,T_k$, would form a four-matching. Hence
\begin{equation}
 1\le a_i\le10.
 \label{eq:f34-a10}
\end{equation}

Let $\D_i$ be the family of members disjoint from $T_i$, and put
\[
 D_i=|\D_i|,
 \qquad
 N_i=|\mathcal N_{\F}[T_i]|,
 \qquad
 V_i=N-2+a_i-D_i.
\]
This choice connects the local neighborhood and block bounds to
the global identity~\eqref{eq:f34-sumV} below, which restricts the possible
$50$-member profiles. The definitions give
\begin{equation}
 D_i=a_j+a_k+c_i,
 \qquad
 N_i=N-D_i,
 \qquad
 V_i=N_i+a_i-2.
 \label{eq:f34-derived}
\end{equation}
Each $\F_i$ is counted in two of the families $\D_1,\D_2,\D_3$, each
$\C_i$ is counted in one, and $\mathcal Z$ is counted in none. Therefore
\[
 \sum_{i=1}^3D_i=2\sum_{i=1}^3a_i+\sum_{i=1}^3c_i.
\]
Combining this identity with \cref{eq:f34-partition,eq:f34-derived} gives
\begin{equation}
 \sum_{i=1}^3V_i=2N-6+z.
 \label{eq:f34-sumV}
\end{equation}
The family $\D_i$ has matching number at most $2$. Therefore
\cref{lem:f34-nu2,eq:f34-neighborhood25,eq:f34-a10,eq:f34-derived} give
\begin{equation}
 D_i\le35,
 \qquad
 N_i\le25,
 \qquad
 V_i\le33.
 \label{eq:f34-basic-profile}
\end{equation}

\subsection{Rigid blocks and finite profiles}

Call $\F_i$ rigid when $N_i=25$. By \cref{lem:f34-rigidity}, the three
members meeting $T_i$ in two points then have a common third point $y$.

\begin{lemma}[Rigid-block interaction]
\label{lem:f34-interaction}
Let $\F_i$ be rigid.
\begin{enumerate}[label=\textup{(\roman*)}]
\item If $y$ lies outside $T_1\cup T_2\cup T_3$, then $a_i=4$.
\item If $a_i\ge5$, then $y$ lies in another anchor, say $T_j$. No
      non-anchor member of $\F_i$ meets $T_i$ in two points. Thus
      $p(\F_i,T_i)=0$, and the $a_i-1$ non-anchor members meet $T_i$ in
      exactly one point.
\item Suppose that $y\in T_j$, put
\[
 \B_y=\{F\in\F_j:y\notin F\},
 \qquad r=a_i-1.
\]
For the three values needed below,
\[
\begin{array}{c|rrr}
r&7&8&9\\ \hline
|\B_y|&\le3&\le2&\le1.
\end{array}
\]
If $\F_j$ is not rigid, then $d_{\F_j}(y)\le7$. When $r=8$, a non-rigid
$\F_j$ cannot have size $9$. If $\F_j$ is rigid, then either $a_j=4$ or
$d_{\F_j}(y)\le2$.
\end{enumerate}
\end{lemma}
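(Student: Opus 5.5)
The plan is to read off everything from \cref{lem:f34-rigidity} applied to $S=T_i$, write $T_i=\{s_1,s_2,s_3\}$, and use the maximality of the matching $T_1,T_2,T_3$ repeatedly. Rigidity gives the three two-point members $\{s_2,s_3,y\},\{s_1,s_3,y\},\{s_1,s_2,y\}$, each $L'_{s}(T_i)\cong R$, $d_{\F}(s)=10$, and $y\notin V(L'_s(T_i))$.

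For (i), if $y\notin T_1\cup T_2\cup T_3$, the three two-point members avoid $T_j,T_k$, so they lie in $\F_i$. A non-anchor member of $\F_i$ meeting $T_i$ in one point, say $\{s_1\}\cup e$ with $e\in E(L'_{s_1}(T_i))$, must meet $\{s_2,s_3,y\}$ because $\F_i$ is intersecting. This forces $y\in e$, which contradicts $y\notin V(L'_{s_1}(T_i))$. Since $n_e=1$ there are no further two-point members, so $\F_i$ is exactly $T_i$ plus the three $y$-members, and $a_i=4$. For (ii), $a_i\ge5$ and (i) put $y$ in some anchor. Since $y$ is the third point of a member meeting $T_i$ in two points, $y\notin T_i$, so $y\in T_j$ for some $j\ne i$. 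Then all three two-point members meet $T_j$ and leave $\F_i$. Hence $p(\F_i,T_i)=0$, and the $r=a_i-1$ non-anchor members each meet $T_i$ in exactly one point.

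For (iii), the key observation is a cross-intersection property. Take $A\in\F_i\setminus\{T_i\}$ with $A\cap T_i=\{s\}$, and let $P$ be the two-point member through $y$ that avoids $s$. Then $A$ and $P$ are disjoint, since $A$ avoids $T_j\ni y$. Any $B\in\B_y$ avoids $T_i$, $T_k$ and $y$, so $B$ is also disjoint from $P$ and $T_k$. Thus $A\cap B=\varnothing$ would give the four-matching $A,B,P,T_k$. Hence the outside pairs $A\setminus T_i$ (all lying outside $T_1\cup T_2\cup T_3$) are each met by every $B\in\B_y$. So $\B_y$ is a family of triples that are transversals of the $r$ outside edges and avoid all anchors except at points of $T_j\setminus\{y\}$. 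I would bound $|\B_y|$ by a finite enumeration over the rooted blocks with $p=0$ from \cref{lem:f34-block}: the $45$ classes of size $8$ for $r=7$ and the $27$ classes of size $9$ for $r=8$, plus an easy argument for $r=9$. For each class, list all triples meeting every outside edge, and find the largest subfamily compatible with pair degree $\le3$, point degree $\le10$, the absence of $4$-sunflowers in $\F_i\cup\B_y\cup\{\text{$y$-members}\}$, and intersection with $T_j$. I expect the table $3,2,1$ to come out of this search. This step is the main obstacle; the rest of (iii) is bookkeeping.

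The degree statements follow from $d_{\F}(y)\le10$, since $y$ already lies in the three two-point members meeting $T_i$, which are not in $\F_j$. This gives $d_{\F_j}(y)\le7$ whenever $\F_j$ is not rigid. For $r=8$, $a_j=d_{\F_j}(y)+|\B_y|\le7+2=9$, and equality would force simultaneous extremality in both bounds. I would exclude this by checking, in the same enumeration, that the two extremal $\B_y$ configurations for $r=8$ are incompatible with seven $y$-members of $\F_j$, for instance because they create a $4$-matching in $\Lk_{\F}(y)$. If $\F_j$ is rigid, then $d_{\F}(y)=10$ and $L'_y(T_j)\cong R$. The edges $s_1s_2,s_1s_3,s_2s_3$ come from the members $\{s_a,s_b,y\}$, which meet $T_j$ only in $y$, so they form a triangle inside $R$. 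If $a_j\ne4$, then (ii) applied to $\F_j$ gives $p(\F_j,T_j)=0$. The remaining edges of $R$ are then forced by its degree sequence $(3,3,3,3,2)$ and by $\nu\le2$: each triangle vertex has one further edge, and at most one edge avoids the triangle. Only edges avoiding $T_i\cup T_k$ give members of $\F_j$, which leaves at most one such member besides $T_j$ and yields $d_{\F_j}(y)\le2$.
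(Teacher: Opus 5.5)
Parts (i) and (ii), the transversal property of $\B_y$, and the rigid-$\F_j$ case are correct and follow the paper; for the last, $|V(R)|=5$ makes ``at most one edge avoids the triangle'' immediate. Your bound $d_{\F_j}(y)\le d_{\F}(y)-3\le7$ is a valid and shorter replacement for the paper's count via $p_y\le\mu_y$ and $|E(L'_y(T_j))|\ge3+r_y$.

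For the cover table you propose augmenting the $45$ and $27$ rooted $p=0$ classes. The paper instead settles the star case by hand: every petal and every one- or two-point cover contains the centre $c$, so $r+|\B_y|\le d_{\F}(c)\le10$, which is exactly $3,2,1$. It then confines all remaining petals and covers to at most eight vertices before enumerating. Your route is a plausible alternative, but you only ``expect'' its outcome. The case $r=9$ rests on an ``easy argument'' you never state. The paper has no separate size-$10$, $p=0$ list, so you would have to filter the $114$ classes or actually supply that argument.

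The genuine gap is the claim that for $r=8$ a non-rigid $\F_j$ cannot have size $9$. You correctly note that $a_j=9$ forces $|\B_y|=2$ and $d_{\F_j}(y)=7$, that is, $p_y+r_y=6$. After that you only announce a check ``in the same enumeration''. As you describe it, that enumeration never contains the six non-anchor $y$-members of $\F_j$, and ``the two extremal configurations'' presupposes output you have not computed.

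The mechanism you suggest, a $4$-matching in $\Lk_{\F}(y)$, is only the first step in the paper. That link contains $T_j\setminus\{y\}=\{b,b'\}$ and an edge of the rigid triangle on $T_i$. Hence the one-point petals of $\F_j$ at $y$ are intersecting, so $r_y\le3$. With $p_y\le4$ from the pair degrees of $\{y,b\}$ and $\{y,b'\}$, this leaves $(p_y,r_y)\in\{(3,3),(4,2)\}$, which is not yet a contradiction.

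The contradiction comes from requiring all six $y$-members of $\F_j$ to meet both members of $\B_y$. This is combined with a case split on the outside parts of those two members: a singleton, two disjoint pairs, the same pair, or two pairs sharing one point $w$. In each case one of the following holds: some outside point has degree at least $11$; or $p_y\le2$ because all two-point $y$-members must use the pair $\{y,b\}$; or $\F_i$ has at most seven one-point members, contradicting $r=8$.

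Without an argument of this kind, or a fully specified joint enumeration of $\F_i$, $\B_y$ and the $y$-members of $\F_j$, this part of (iii) remains unproved.
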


\begin{proof}[Proof (partly computer-assisted)]
Suppose first that $y$ is outside the three anchors. A member of $\F_i$
meeting $T_i$ in one point must intersect the two-point member supported on
the other two points of $T_i$. It can do so only through $y$, contrary to
\cref{lem:f34-rigidity}. The anchor and the three two-point members are
therefore all of $\F_i$. This proves (i). If $a_i\ge5$, part~(i) and
$y\notin T_i$ place $y$ in another anchor. The three two-point members then
lie outside $\F_i$, which proves (ii).

For (iii), delete the anchor point from every one-point member of $\F_i$.
This gives three edge families $R_1,R_2,R_3$, indexed by the points of
$T_i$. They are pairwise cross-intersecting, each has maximum degree at most
$3$ and matching number at most $2$, and their total size is $r$.

Every member of $\B_y$ must cover each edge in $R_1\cup R_2\cup R_3$ with
its one- or two-point part outside the anchors. Indeed, let
$A=\{s\}\cup e\in\F_i$ correspond to an edge $e\in R_s$. If
$B\in\B_y$ avoided $e$, then $A,B,T_k$, and the rigid two-point member
$(T_i\setminus\{s\})\cup\{y\}$ would be four disjoint triples, where
$\{i,j,k\}=\{1,2,3\}$.

It remains to justify the finite cover bounds. Let $R_1$ be largest, so
$|E(R_1)|\ge3$. If $R_1$ is a three-edge star, cross-intersection forces the
whole union to be a star. Every one- or two-point cover contains its centre.
All $r$ one-point members of $\F_i$ and all members of $\B_y$ then contain
that point, so the point-degree bound gives $|\B_y|\le10-r$.

If $R_1$ is a triangle, every cross-intersecting edge lies on its three
vertices. Otherwise $R_1$ has a two-matching, and every edge in the other two
graphs lies in $V(R_1)$. By definition, $V(R_1)$ contains only vertices
incident with an edge. If $c$ is the number of connected components of
$R_1$, then $|V(R_1)|\le |E(R_1)|+c$. Choosing one edge from each component
gives a matching, so $c\le\nu(R_1)$. Therefore, for $|E(R_1)|\le6$,
\[
 |V(R_1)|\le |E(R_1)|+\nu(R_1)\le8.
\]
The unique seven-edge graph from \cref{lem:f34-rigidity} has five vertices.
Thus every non-star case has a representative on eight vertices. The cover
enumerator exhausts the three edge sets and their one- and two-point covers on
these eight vertices. It applies the pair-degree and point-degree bounds and
the necessary matching tests in the links of the outside points. The maxima
are $3,2,1$ for $r=7,8,9$. The calculation uses only necessary conditions,
so these are upper bounds for the original family.

To bound $d_{\F_j}(y)$, observe that the three rigid two-point members give a
triangle in $L'_y(T_j)$. The one-point members of $\F_j$ through $y$ have petals
disjoint from this triangle. Let $p_y$ and $r_y$ be the numbers of two-point
and one-point members of $\F_j$ through $y$. The degree and matching bounds
give
\[
 p_y\le\mu_y,
 \qquad
 |E(L'_y(T_j))|\ge3+r_y.
\]
Since $1+\mu_y+|E(L'_y(T_j))|\le10$, we obtain
$p_y+r_y\le6$ and therefore
\[
 d_{\F_j}(y)=1+p_y+r_y\le7.
\]

Suppose that $r=8$, the family $\F_j$ is non-rigid, and $a_j=9$. The cover and
degree bounds force
\[
 |\B_y|=2,
 \qquad
 p_y+r_y=6,
 \qquad
 p_y=\mu_y,
 \qquad
 |E(L'_y(T_j))|=3+r_y.
\]
The link at $y$ already contains the two disjoint edges
$T_j\setminus\{y\}$ and any edge of the rigid triangle on $T_i$. Every
one-point petal in $\F_j$ avoids both $T_i$ and
$T_j\setminus\{y\}$. Two disjoint such petals would therefore complete a
four-matching in the link at $y$. Thus the $r_y$ one-point petals in
$\F_j$ are intersecting, so $r_y\le3$, while $\mu_y\le4$. Hence
$(p_y,r_y)$ is $(3,3)$ or $(4,2)$.

Write $T_j=\{y,b,b'\}$. For $B\in\B_y$, call $B\setminus T_j$ its outside
part. Each outside part has size one or two and covers
$R_1\cup R_2\cup R_3$. If both have size two, they are disjoint, equal, or
intersect in exactly one point. Together with the singleton case, these are
the four cases treated below.

\emph{Case 1: one outside part is a singleton $\{x\}$.}  All eight petals
from $\F_i$ contain $x$.
Intersectingness of $\F_j$ also puts $x$ in all $r_y\ge2$ one-point petals
through $y$. The member of $\B_y$ contains $x$ as well, so
$d_{\F}(x)\ge11$.

\emph{Case 2: the outside parts are disjoint pairs $C,D$.}  If the two
members of $\B_y$ use different anchor points, then the two full triples are
disjoint, contrary to the intersectingness of $\F_j$. Suppose instead that they use
the same anchor point, say $b$. A two-point member of $\F_j$ through $y$
that contains $b'$ would need its third point to belong to both $C$ and $D$
in order to meet both members of $\B_y$, which is impossible. Thus every
two-point member through $y$ contains $b$. The pair $\{y,b\}$ already
belongs to $T_j$ and has degree at most three, so $p_y\le2$, contrary to
$p_y\ge3$.

\emph{Case 3: the outside parts are the same pair $\{x,x'\}$.}  The two
members of $\B_y$ then use different anchor points.
Pair degree leaves at most one petal from $\F_i$ equal to $\{x,x'\}$. The
remaining petals contain $x$ or $x'$. Call one an $x$-petal if it contains
$x$ but not $x'$, and define an $x'$-petal symmetrically. If there are at
most three of each type, there are at most seven petals in total. Thus one
type occurs at least four times; suppose that there are at least four
$x$-petals.

If an $x'$-petal $\{x',v\}$ occurs in $R_s$, cross-intersection forces every
$x$-petal in either of the other two residue graphs to equal $\{x,v\}$, so
there is at most one in each. Since $\Delta(R_s)\le3$ and there are at least
four $x$-petals altogether, $R_s$ contains at least two distinct
$x$-petals. These two edges prevent an $x'$-petal from occurring in either
other residue graph. Moreover, at least one $x$-petal occurs outside $R_s$;
it is $\{x,v\}$ and forces every $x'$-petal in $R_s$ to equal
$\{x',v\}$. Hence there are at most five $x$-petals, one $x'$-petal, and
one petal $\{x,x'\}$, again at most seven in total. Therefore no
$x'$-petal occurs, and all eight petals contain $x$. The symmetric argument
applies if the $x'$-petals are the type occurring at least four times.

Thus all eight one-point members of $\F_i$ and both members of $\B_y$
contain one common point, say $x$. Each of the six members counted by
$p_y+r_y$ must contain $x$ or $x'$ in order to meet both members of
$\B_y$. At most three contain $x'$, by the pair-degree bound for
$\{y,x'\}$. Hence at least one additional member contains $x$, giving
degree at least $11$.

\emph{Case 4: the outside parts share exactly one point, say $\{w,c\}$ and
$\{w,d\}$.}  Every petal from
$\F_i$ contains $w$ or equals $\{c,d\}$. At most two of the $p_y$ two-point
members of $\F_j$ and at most one of its $r_y$ one-point members avoid $w$.
Thus at least three of the six members counted by $p_y+r_y$ contain $w$.

If none of $R_1,R_2,R_3$ contains $\{c,d\}$, all eight petals from $\F_i$
contain $w$. Together with the two members of $\B_y$ and at least three
further members of $\F_j$, this gives $d_{\F}(w)\ge13$.

If at least two residue graphs contain $\{c,d\}$, cross-intersection forces
every petal to be one of $\{c,d\}$, $\{w,c\}$, and $\{w,d\}$. The first
pair can occur in at most three members of $\F_i$. Each of the other two
pairs already occurs in one member of $\B_y$, so each can occur in at most
two members of $\F_i$. Thus $\F_i$ has at most $3+2+2=7$ one-point
members, contrary to $r=8$.

Finally, suppose that exactly one residue graph contains $\{c,d\}$. Each
of the other two graphs can then contain only $\{w,c\}$ and $\{w,d\}$, so
has at most two edges. Reaching eight petals forces both graphs to contain
both edges, while the graph containing $\{c,d\}$ contains three further
edges through $w$. Hence seven members of $\F_i$ contain $w$. Together
with the two members of $\B_y$ and at least three further members of $\F_j$,
this gives $d_{\F}(w)\ge12$. This excludes a non-rigid size-$9$ family
$\F_j$.

Suppose finally that $\F_j$ is rigid. If $a_j=4$, the assertion holds.
Otherwise (ii), applied to $\F_j$, gives $p(\F_j,T_j)=0$.
The graph $L'_y(T_j)$ is the seven-edge graph $R$ from
\cref{lem:f34-rigidity}. Indeed, $R$ is $K_{2,3}$ together with one edge
inside its three-vertex part. Every triangle contains that additional edge,
and the two vertices outside the triangle form an edge. Thus every triangle
in $R$ leaves exactly one disjoint edge. At most one one-point petal of
$\F_j$ avoids the rigid triangle, and $d_{\F_j}(y)\le2$.
\end{proof}

The profile variables satisfy the aggregate bound
\begin{equation}
 \sum_{i=1}^3V_i\le96.
 \label{eq:f34-aggregateV}
\end{equation}
If no $V_i$ equals $33$, this follows from integrality and
\cref{eq:f34-basic-profile}. If $V_i=33$, then
$(N_i,a_i)=(25,10)$. This rigid $10$-block has $r=9$ and common point in
another anchor $T_j$. The cover table gives $|\B_y|\le1$. Since the
members of $\F_j$ split according to whether they contain $y$,
\[
 a_j=d_{\F_j}(y)+|\B_y|.
\]
If $\F_j$ is rigid, \cref{lem:f34-interaction} gives either $a_j=4$ or
$d_{\F_j}(y)\le2$. Thus $a_j\le4$ and $V_j\le27$. If $\F_j$ is
non-rigid, the same lemma gives $d_{\F_j}(y)\le7$, so $a_j\le8$; moreover,
$N_j\le24$, and hence $V_j\le30$. Together with $V_k\le33$, both cases
imply \cref{eq:f34-aggregateV}.

We now encode the local structure around each anchor by finitely many integer
variables for use in the profile enumeration. Write an anchor as
$T_j=\{v_0,v_1,v_2\}$. Let
\begin{itemize}
\item $n_{uv}$ be the number of members meeting $T_j$ exactly in
      $\{u,v\}$, and let $q_{uv}$ count those members lying in $\F_j$;
\item $L_v=|E(L'_v(T_j))|$, and let $r_v$ count the corresponding one-point
      members lying in $\F_j$;
\item $k_v$ be the number of rigid blocks whose common point is $v$.
\end{itemize}
The profile verifier exhausts
\begin{align}
 &0\le q_{uv}\le n_{uv}\le2,
 \qquad 0\le r_v\le L_v\le7,
 \label{eq:f34-local-ranges}\\
 &1+\sum_{\{u,v\}\in\binom{T_j}{2}}n_{uv}
      +\sum_{v\in T_j}L_v=N_j,
 \qquad
 1+\sum_{\{u,v\}\in\binom{T_j}{2}}q_{uv}
      +\sum_{v\in T_j}r_v=a_j,
 \label{eq:f34-local-identities}\\
 &1+\sum_{u\in T_j\setminus\{v\}}n_{uv}+L_v\le10,
 \qquad
 L_v\ge r_v+3k_v.
 \label{eq:f34-local-degree}
\end{align}
If $k_v\ge2$, the two induced triangles are vertex-disjoint and exhaust the
link: any additional edge, together with one edge from each triangle chosen
to avoid its endpoints, would form a three-matching. Hence $L_v=6$ and
$r_v=0$. If $k_v\ge1$, the petals counted by $r_v$ avoid the induced
triangle. Two disjoint such petals, together with an edge of the triangle,
would again form a three-matching. They are therefore intersecting, and the
degree bound gives $r_v\le3$. When $L_v=7$, the link is the graph $R$ from
\cref{lem:f34-rigidity}, in which a triangle has only one disjoint edge; thus
$r_v\le1$. The number of members of $\F_j$ avoiding $v$ is bounded by the
applicable entry of \cref{lem:f34-interaction}. For a rigid family $\F_j$,
\cref{lem:f34-rigidity} forces $n_{uv}=1$, $L_v=7$, and $q_{uv}=0$ unless
$a_j=4$.

For each $v\in T_j$, form the pair
\[
 \left(\sum_{u\in T_j\setminus\{v\}}q_{uv},r_v\right).
\]
The first coordinate counts the non-anchor members of $\F_j$ that meet
$T_j$ in two points and contain $v$; the second counts those meeting $T_j$
exactly at $v$. We call the resulting triple of pairs the anchor signature.
It must be the signature of an actual rooted block of size $a_j$. The
complete block enumeration gives
\[
 2,6,13,23,35,49,51,41,16
\]
signatures at sizes $2$ through $10$, respectively. All ranges used by the
profile calculation are therefore finite.

\begin{proposition}[Profile reduction]
\label{prop:f34-profiles}
Up to a simultaneous permutation of the three anchors, a hypothetical
$50$-member family has one of the following four profiles.
\begin{center}
\begin{tabular}{@{}ccccc@{}}
\toprule
&$(a_1,a_2,a_3)$&$(c_1,c_2,c_3)$&$z$&$(N_1,N_2,N_3)$\\
\midrule
I   &$(8,9,10)$&$(6,8,9)$&$0$&$(25,24,24)$\\
II  &$(8,9,10)$&$(7,7,9)$&$0$&$(24,25,24)$\\
III &$(9,9,10)$&$(7,7,8)$&$0$&$(24,24,24)$\\
IV  &$(8,9,10)$&$(7,8,8)$&$0$&$(24,24,25)$\\
\bottomrule
\end{tabular}
\end{center}
Their $V$-vectors are, respectively,
\[
 (31,31,32),\qquad
 (30,32,32),\qquad
 (31,31,32),\qquad
 (30,31,33).
\]
\end{proposition}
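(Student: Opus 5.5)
We will derive the profiles by a finite integer search over the variables set up above, starting from the aggregate constraints and then pruning with the local encodings. With $N=50$, identity \eqref{eq:f34-sumV} gives $\sum_i V_i=94+z$, and the aggregate bound \eqref{eq:f34-aggregateV} forces $z\le2$. Together with $V_i\le33$ from \eqref{eq:f34-basic-profile}, this already leaves only a short list of $V$-vectors, all with sum between $94$ and $96$. By \eqref{eq:f34-derived}, each $V_i$ splits as $N_i+a_i-2$ with $N_i\le25$ and $a_i\le10$. Hence every anchor has $a_i\ge V_i-23\ge 6$ or so, and at least two anchors have $a_i\ge8$.

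The first filtering step uses the rigid-block interaction. Whenever $N_i=25$ and $a_i\ge5$, \cref{lem:f34-interaction} places the common point $y$ in another anchor $T_j$. It then caps $|\B_y|$ and $d_{\F_j}(y)$, and therefore caps $a_j=d_{\F_j}(y)+|\B_y|$. For $r=a_i-1\in\{7,8,9\}$ this gives $a_j\le 10,9,8$ in the non-rigid case, with the size-$9$ exclusion at $r=8$, and $a_j\le4$ in the rigid case. This removes all vectors in which two anchors are rigid with large blocks, as well as many asymmetric combinations. I will enumerate the triples $(N_i,a_i)$ that survive these tests.

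The second step recovers the $c_i$ and $z$ from the partition identities. Given $(N_i,a_i)$, the quantities $D_i=N-N_i$ and $c_i=D_i-a_j-a_k$ are determined, and \eqref{eq:f34-partition} must hold with $z=94-\sum V_i+\ldots$ consistently. I will also require $D_i\le35$ by \cref{lem:f34-nu2} and $c_i\ge0$. For each surviving profile, the local verifier then searches over the anchor variables $n_{uv},q_{uv},L_v,r_v,k_v$ subject to \eqref{eq:f34-local-ranges}--\eqref{eq:f34-local-degree}. The induced anchor signature must belong to the enumerated list of rooted-block signatures of size $a_j$. Further constraints come from the $k_v$ rules: $L_v=6,r_v=0$ when $k_v\ge2$, $r_v\le3$ when $k_v\ge1$, and $r_v\le1$ when $L_v=7$. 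Consistency across anchors is imposed by requiring that each rigid block's $y$ be counted in exactly one $k_v$ of another anchor, and that the $\B_y$ caps hold there. This search should leave exactly the four listed profiles, each with $z=0$.

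The main obstacle is ensuring that this search is exhaustive while using only necessary conditions. In particular, the step that links the count $c_i$ of cross members to the local data $n_{uv},L_v$ at the anchors is delicate. A cross member in $\C_i$ meeting $T_j$ in two points contributes to $n_{uv}$ at $T_j$ but not to $q_{uv}$, so the differences $n_{uv}-q_{uv}$ and $L_v-r_v$, summed over the anchors, must account for the $c$'s and $z$ with the correct multiplicities. I would first write these incidence identities explicitly, as $\sum_{T_j}\bigl(N_j-a_j\bigr)=2\sum c_i+3z$, and add them to the verifier; this is exactly what should kill the $z\ge1$ cases. Finally, I would check the four output $V$-vectors against \eqref{eq:f34-sumV}, which requires each to sum to $94$.
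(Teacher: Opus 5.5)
Your pipeline (a numerical search over $(a_i,c_i,z)$, then \cref{lem:f34-interaction}, then the per-anchor data and signatures) matches the paper's. However, your list of filters omits an ingredient the paper already uses at the numerical stage. This is the computer-verified bound $\mc(10,10)\le2$ from \cref{lem:f34-cross-bounds}: the cross class between two rooted $10$-blocks has at most two members. Without it, your search will not end at exactly four profiles.

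Take $(a_1,a_2,a_3)=(9,10,10)$, $(c_1,c_2,c_3)=(6,7,8)$ and $z=0$. Then $D=(26,26,27)$, $N=(24,24,23)$ and $V=(31,32,31)$, which sums to $94$. So \eqref{eq:f34-partition}, \eqref{eq:f34-derived}, \eqref{eq:f34-basic-profile} and \eqref{eq:f34-aggregateV} all hold. No anchor is rigid, so \cref{lem:f34-interaction} imposes nothing and every $k_v$ equals $0$. The per-anchor local systems are satisfiable. The pairs $(a_j,N_j)=(9,24)$ and $(10,24)$ already pass them with all $k_v=0$ in profile~III. The pair $(10,23)$ is obtained from $(10,24)$ by lowering one of the $14$ units of slack in some $n_{uv}-q_{uv}$ or $L_v-r_v$; this keeps every constraint and leaves the signature unchanged.

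With the cross bound, two $10$-blocks $\F_j,\F_k$ force $c_i\le2$. At the same time, $N_i=30-c_i\le25$ forces $c_i\ge5$. This removes every profile with two $10$-blocks at once, and it is part of how the paper reaches its $55$ numerical profiles.

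Your proposed extra identity does not fill this hole. From \eqref{eq:f34-partition} and \eqref{eq:f34-derived} one already has $N_j-a_j=c_i+c_k+z$ for each $j$. Hence $\sum_j(N_j-a_j)=2\sum_ic_i+3z$ is a formal consequence of constraints you have already imposed. Adding it to the verifier changes nothing, and in particular it cannot be what eliminates the cases $z\ge1$.

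There are also two smaller slips.
\begin{enumerate}
\item For a rigid $\F_j$ with $r=7$, \cref{lem:f34-interaction} gives only $a_j\le d_{\F_j}(y)+|\B_y|\le2+3=5$, not $a_j\le4$. Overstating a cap makes a search over necessary conditions unsound.
\item The correct relation is $z=\sum_iV_i-94$, not $94-\sum_iV_i$.
\end{enumerate}
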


\begin{proof}[Computer-assisted proof]
Order the anchors so that $a_1\le a_2\le a_3$. Enumerate the integers
$1\le a_i\le10$ and nonnegative $c_1,c_2,c_3,z$. Test the partition and
derived identities in
\cref{eq:f34-partition,eq:f34-derived}. Apply
\cref{eq:f34-basic-profile}, the neighborhood and aggregate bounds, and the
fact proved in \cref{lem:f34-cross-bounds} below that the cross class between
two rooted blocks of size $10$ has at most two members. This leaves $55$
numerical profiles. The conditions in \cref{lem:f34-interaction} leave $11$.
Next impose the local ranges in \cref{eq:f34-local-ranges} and the identities
in \cref{eq:f34-local-identities}. The degree conditions in
\cref{eq:f34-local-degree} and the complete anchor signatures leave exactly
the four displayed profiles.

A separate verifier implements the same constraints, reproduces the counts
$55$, $11$, and $4$, and checks the four displayed profiles against the
cross-family bounds below.
\end{proof}

\subsection{Cross-family bounds}

To eliminate the four remaining profiles, we bound the number of members
meeting two anchors in terms of the sizes and structures of the rooted blocks
attached to them. This subsection is independent of the standing
$50$-member assumption: its bounds concern an arbitrary $3$-uniform family
with matching number two and no $4$-sunflower. \Cref{thm:f34-upper}
applies them to the cross classes $\C_i$.

Let $U_1,U_2$ be a maximum two-matching in a $3$-uniform family $\mathcal H$
without a $4$-sunflower, where $\nu(\mathcal H)=2$. Define
\begin{align*}
 \A_1&=\{H\in\mathcal H:H\cap U_1\ne\varnothing,
                         \ H\cap U_2=\varnothing\},\\
 \A_2&=\{H\in\mathcal H:H\cap U_2\ne\varnothing,
                         \ H\cap U_1=\varnothing\},\\
 \X&=\{H\in\mathcal H:H\cap U_1\ne\varnothing,
                       \ H\cap U_2\ne\varnothing\}.
\end{align*}
The families $\A_1,\A_2$ are rooted blocks. Let $\mc(a,b)$ be the maximum
of $|\X|$ under the conditions $|\A_1|\ge a$ and $|\A_2|\ge b$. Let
$\mcz(a,b)$ be the same maximum with $p(\A_1,U_1)=0$.

Deleting a non-anchor block member preserves the two anchors, all cross
members, the absence of a $4$-sunflower, and matching number $2$. These
quantities are therefore non-increasing in each block-size argument. Since
rooted blocks have size at most $10$, the exact lists at sizes $8,9,10$
suffice for the values below.

\begin{lemma}[Exact cross-family bounds]
\label{lem:f34-cross-bounds}
One has
\begin{equation}
 \mcz(8,10)\le7,
 \qquad
 \mcz(9,10)\le6,
 \qquad
 \mc(10,9)\le6,
 \qquad
 \mc(10,10)\le2.
 \label{eq:f34-cross-bounds}
\end{equation}
\end{lemma}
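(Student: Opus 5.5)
This lemma is the computer-assisted core of the cross-family step, and I would prove it by exhaustive enumeration built on the rooted-block lists of \cref{lem:f34-block}. By the monotonicity remark preceding the statement, it suffices to take $|\A_1|$ and $|\A_2|$ exactly at the listed sizes. The relevant lists are: the $45$ rooted classes with $|\A|=8$ and $p=0$; the $27$ classes with $|\A|=9$ and $p=0$; the $665$ classes of size $9$; and the $114$ classes of size $10$. A larger block only shrinks the freedom for $\X$, but monotonicity already reduces everything to these exact sizes.

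For each of the four bounds I iterate over ordered pairs $(\A_1,U_1)$, $(\A_2,U_2)$ taken from the appropriate lists, placed on disjoint label sets. For $\mc(10,10)$ this is $114^2$ pairs. For $\mc(10,9)$ it is $114\cdot665$, and similarly for the two $\mcz$ cases. The outside points of the two blocks may coincide in any pattern. I handle this by letting $\A_2$'s outside labels be identified with $\A_1$'s outside labels, or left fresh, in all possible ways up to symmetry. Any such identification must keep the union free of $4$-sunflowers and keep matching number $2$.

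The key structural fact is that every member of $\X$ meets both $U_1$ and $U_2$. So it consists of one point from each anchor plus one further point, or two points from one anchor and one from the other. The candidate members therefore lie in an explicitly bounded set: anchor points times anchor points times (existing support plus at most a few fresh points). Fresh points can be canonicalized by the first-appearance rule used in \cref{lem:f34-block}.

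I then search for the largest addable set $\X$ by branch and bound. A partial configuration is rejected when any of the following occurs:
\begin{itemize}
\item a point degree exceeds $10$, or a pair degree exceeds $3$ (\cref{lem:f34-degrees});
\item some link acquires a $4$-matching or a degree-$4$ vertex, which would give a $4$-sunflower;
\item a member of $\X$, together with members of $\A_1\cup\A_2$, forms three pairwise disjoint triples, violating $\nu=2$.
\end{itemize}
The last test is strong: a cross member disjoint from some $A\in\A_1$ and some $B\in\A_2$, with $A\cap B=\varnothing$, is forbidden. The branch-and-bound search certifies that no valid extension exceeds $7$, $6$, $6$ or $2$ respectively.

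Before running the search, I would prune by hand using the degree budget. A size-$10$ block through an anchor saturates much of the point degrees on that anchor. For $\mc(10,10)$, the neighborhood bound \eqref{eq:f34-neighborhood25} at each anchor gives roughly $|\X|\le 25-10+\ldots$, so most anchor points have little spare degree, which explains the tiny value $2$.

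The main obstacle is making the search finite and trustworthy when outside points of the two blocks are identified. The number of identification patterns between the outside supports (up to $18$ labels each) is large. I would try first to avoid enumerating identifications explicitly. Instead, I would treat the union family $\A_1\cup\A_2\cup\X$ as generated incrementally from the pair of anchors, adding $\A_2$'s members and the cross members with first-appearance labelling over a common outside pool. Isomorphism pruning would be done by canonical forms under anchor permutations and the swap of the anchors when sizes coincide. Given the heavy computation, I would, as for \cref{prop:f34-profiles}, cross-check with an independent verifier.
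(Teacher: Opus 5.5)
Your proposal takes the same approach as the paper's computer-assisted proof. Both pair representatives from the rooted-block lists of \cref{lem:f34-block} ($5130$, $3078$ and $75810$ pairs, plus the $114$ ten-blocks against each other) and enumerate every partial bijection between the two outside supports with necessary-condition pruning. Both then search the cross triples (a point of each anchor plus a merged-support or fresh point) under distinctness, degree, three-matching and four-sunflower tests. Two things to tighten: your three-matching test must also catch matchings using two or three cross members, as the paper's recursion does; and the hand pruning via \eqref{eq:f34-neighborhood25}, which only gives $|\X|\le15$, is not needed.
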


\begin{proof}[Computer-assisted proof]
The $10$-by-$10$ calculation checks all $6555$ unordered pairs of the $114$
rooted $10$-block classes. No pair admits a cross class of size $3$.

For the other bounds, use the complete rooted lists in
\cref{lem:f34-block}. Make the two anchors disjoint. Every outside point of
the second block is either identified injectively with an unused outside
point of the first block or assigned the next fresh label. This enumerates
every possible intersection of the outside supports, since such an
identification is a partial bijection.

A partial identification is rejected only if a necessary condition fails:
distinctness, point degree at most $10$, pair degree at most $3$, or an
already completed $4$-sunflower. After a full identification, the program
generates every possible cross triple. It contains a point of each anchor;
its third point is in the merged block support or is fresh. A target of $t$
cross members needs at most $t$ fresh labels, since each cross triple has at
most one point outside the merged support. Fresh labels are interchangeable.

The subset recursion tests distinctness, every three-matching, and every
four-subfamily for the sunflower condition. Its symmetry reduction orders
the images of interchangeable degree-one outside twins with the same other
pair. These twins are freely permutable, so one representative of each orbit
remains. The recursion retains merged base pairs that already contain a
three-matching; the resulting search space therefore contains every valid
pair.

The exhaustive counts are
\begin{center}
\begin{tabular}{@{}lrr@{}}
\toprule
bound & rooted pairs & cross-class size excluded\\
\midrule
$\mcz(8,10)\le7$ & $45\cdot114=5130$ & $8$\\
$\mcz(9,10)\le6$ & $27\cdot114=3078$ & $7$\\
$\mc(10,9)\le6$  & $665\cdot114=75810$ & $7$\\
\bottomrule
\end{tabular}
\end{center}
The first two rows prove the two restricted inequalities. The last row
handles exact sizes $9$ and $10$. The only larger case in $\mc(10,9)$ is
$10$ by $10$, already bounded by $2$.
\end{proof}

\subsection{Exclusion of the profiles}

\begin{theorem}
\label{thm:f34-upper}
Every $3$-uniform family with $50$ distinct members contains a
$4$-sunflower. Consequently,
\[
 f(3,4)\le49.
\]
\end{theorem}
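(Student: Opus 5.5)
We argue by contradiction. Assume $\F$ is $3$-uniform with $N=50$ members and no $4$-sunflower; any larger family contains a $50$-member subfamily, so this case suffices. As shown before \cref{prop:f34-profiles}, \cref{lem:f34-nu2} forces $\nu(\F)=3$, and we fix a maximum matching $T_1,T_2,T_3$. \Cref{prop:f34-profiles} then leaves only the four profiles I--IV. It therefore remains to show that each profile contradicts \cref{lem:f34-cross-bounds}. To apply that lemma to a cross class $\C_i$, take $\mathcal H=\D_i$, the family of members disjoint from $T_i$. It has $\nu(\D_i)\le2$, and it equals $2$ because $T_j,T_k\in\D_i$. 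The pair $T_j,T_k$ is a maximum two-matching in $\D_i$, and the resulting rooted blocks are exactly $\F_j$ and $\F_k$. The associated cross family is exactly $\C_i$, so $c_i\le\mc(a_j,a_k)$. When $\F_j$ has $p(\F_j,T_j)=0$, the sharper bound $c_i\le\mcz(a_j,a_k)$ applies.

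Next we identify which blocks have $p=0$. In every profile the block with $a_3=10$ is the relevant one, and a block of size $a_i\ge5$ that is rigid has $p(\F_i,T_i)=0$ by \cref{lem:f34-interaction}(ii). We then check the profiles one by one, using monotonicity of $\mc$ and $\mcz$ in the block-size arguments (each is non-increasing in $a$ and $b$).

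Profile III has $(a_1,a_2,a_3)=(9,9,10)$. The cross class between the two $9$-blocks is $\C_3$, and the classes pairing a $9$-block with the $10$-block are $\C_1,\C_2$. From $\mc(10,9)\le6$, symmetric in the two blocks since the roles can be swapped, we get $c_1,c_2\le6$. This contradicts $c_1=c_2=7$.

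Profile IV has $(8,9,10)$ with $N_3=25$, so the $10$-block is rigid. The class pairing $a_2=9$ with $a_3=10$ is $\C_1$, and $\mc(10,9)\le6$ gives $c_1\le6<7$, a contradiction. In fact this alone disposes of IV, and likewise of II.

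Profile II has $c_1=7>6$, so it is excluded the same way. Profile I has $c_1=6$, so this bound allows it. In profile I, however, $N_1=25$, so the $8$-block $\F_1$ is rigid, and (ii) gives $p(\F_1,T_1)=0$. The class pairing $\F_1$ with $\F_3$ is $\C_2$, and $\mcz(8,10)\le7<8=c_2$ gives the contradiction.

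The $\mcz(9,10)$ bound would be needed if a rigid $9$-block occurred. As an independent check, profile II has $N_2=25$ with $a_2=9$, so $p(\F_2,T_2)=0$ and $\C_1$, which pairs $\F_2$ with $\F_3$, satisfies $c_1\le\mcz(9,10)\le6<7$.

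The main obstacle is the bookkeeping of orientation. We must confirm that $\mcz$ constrains the first argument's block, and match each $\C_i$ to the correct pair $(j,k)$, namely the blocks other than $i$. Once this is settled, every profile violates some inequality in \cref{eq:f34-cross-bounds}. Hence no $50$-member family exists, and $f(3,4)\le49$.
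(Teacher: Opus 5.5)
Your argument is correct and follows the paper's proof. You identify each $\C_i$ as the cross class of $\D_i=\F_j\cup\F_k\cup\C_i$ with anchors $T_j,T_k$, rule out profile~I by rigidity of $\F_1$ and $\mcz(8,10)\le7$, and rule out profiles III and IV by $\mc(10,9)\le6$. Your observation that profile~II also falls to $\mc(10,9)\le6$, by symmetry of $\mc$ and $c_1=7$, is valid, so the paper's rigidity-plus-$\mcz(9,10)$ argument there is an equivalent but unnecessary route.
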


\begin{proof}
Assume that $\F$ has $50$ members and contains no $4$-sunflower. Use the
maximum-matching decomposition above. By \cref{prop:f34-profiles}, after
permuting the anchors there are four cases.

For $\{i,j,k\}=\{1,2,3\}$, the subfamily
$\F_j\cup\F_k\cup\C_i$ has matching number exactly $2$. It contains
$T_j,T_k$, while three disjoint members together with $T_i$ would form a
four-matching. Thus $\C_i$ is a cross class to which
\cref{lem:f34-cross-bounds} applies.

In profile I, $N_1=25$, so $\F_1$ is rigid. It has size $8$, and
\cref{lem:f34-interaction}(ii) gives $p(\F_1,T_1)=0$. The class $\C_2$ is
the cross class between $\F_1$ and $\F_3$, of sizes $8$ and $10$. Therefore
\[
 c_2\le\mcz(8,10)\le7,
\]
contrary to $c_2=8$.

In profile II, $N_2=25$, so the size-$9$ block $\F_2$ is rigid and
$p(\F_2,T_2)=0$. The class $\C_1$ is the cross class between $\F_2$ and the
size-$10$ block $\F_3$. Hence
\[
 c_1\le\mcz(9,10)\le6,
\]
contrary to $c_1=7$.

In profiles III and IV, $\C_1$ is the cross class between blocks of sizes
$9$ and $10$. In each case,
\[
 c_1\le\mc(10,9)\le6,
\]
contrary to $c_1=7$. All four profiles are impossible.
\end{proof}

Combining \cref{thm:f34-lower,thm:f34-upper} gives
\[
 39\le f(3,4)\le49.
\]

\subsection{Computational verification}

Four computations enter \cref{thm:f34-upper}: the rooted-block enumeration in
\cref{lem:f34-block}, the eight-vertex cover enumeration in
\cref{lem:f34-interaction}, the profile calculation in
\cref{prop:f34-profiles}, and the cross-family searches in
\cref{lem:f34-cross-bounds}. The supplementary material contains the enumerators, the canonicalizer, the
profile verifier, the search results, and the result checker. The checker reproduces
\[
 55\ \longrightarrow\ 11\ \longrightarrow\ 4
\]
profiles, verifies the cover bounds $3,2,1$ for $r=7,8,9$, and
checks all $5130$, $3078$, and $75810$ rooted-pair results. The enumerators
establish the exclusions; the checker separately verifies that their result
sets cover all required rooted pairs.

\section{Bounds for \texorpdfstring{$f(4,3)$}{f(4,3)}}
\label{sec:f43}

This section proves $54\le f(4,3)\le83$ and determines the intersecting
extremum $\psi(4,3,2)=27$ exactly. For the upper bound, we fix a member $R$
and split the family into the members meeting $R$ and the members disjoint
from $R$. The members disjoint from $R$ pairwise intersect, so their number
is at most $\psi(4,3,2)$. The members meeting $R$ number at most $56$, by
an analysis of the exact trace classes of $R$. Together these bounds give
$f(4,3)\le56+27=83$. The lower bound places two copies of the extremal
$27$-member family on disjoint supports. The upper bound is
computer-assisted at two points. The support bound in
\cref{prop:f43-triple-inputs} rests on fifteen unsatisfiability
certificates, and the inequality $\psi(4,3,2)\le27$ rests on an exhaustive
canonical-augmentation search.

\subsection{Finite triple inputs}

The neighborhood argument uses two finite facts about intersecting triple
families. The first is a size bound. The second concerns the support at
equality.

The following exact value is due to Abbott and Gardner
\cite{AbbottGardner1969}:
\begin{equation}
 f(3,3)=20.
 \label{eq:f33-exact}
\end{equation}

\begin{proposition}[Finite triple bounds]
\label{prop:f43-triple-inputs}
The following statements hold.
\begin{enumerate}[label=\textup{(\roman*)}]
\item Every intersecting, $3$-uniform family without a $3$-sunflower has at
      most $10$ members. In the notation of \cref{eq:psi-definition},
      \[
       \psi(3,3,2)\le10.
      \]
\item If such a family has exactly $10$ members, then its support has at most
      $6$ points.
\end{enumerate}
\end{proposition}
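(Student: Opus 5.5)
We need to bound intersecting 3-uniform families with no 3-sunflower, and then control the support at equality. The paper mentions fifteen unsatisfiability certificates, so the support bound (ii) is computer-assisted. Our plan is to reduce by hand as far as possible and leave a finite search.

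For (i), fix a member $S=\{s_1,s_2,s_3\}$ of an intersecting family $\F$ with no $3$-sunflower. Every member meets $S$, so we can count the members of $\mathcal N_{\F}[S]=\F$ as in \cref{eq:f34-neighborhood25}, with the parameters now adapted to three petals. Every pair has degree at most $2$, so $n_e\le1$ for each $e\in\binom S2$. The link of a point is a graph with maximum degree and matching number at most $2$, so point degrees are at most $\CH(2,2)=5$. The exact-singleton graph $L'_s(S)$ sits in $\Lk_{\F}(s)$ together with the edge $S\setminus\{s\}$, which is disjoint from all its edges. Hence $\nu(L'_s(S))\le1$ and $\Delta\le2$, so $L'_s(S)$ is a star with at most two edges or a triangle: at most $3$ edges. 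Combining these, and writing $\mu_x$ as before,
\[
 |\F|\le 1+s+\sum_{x\in S}\min\{3,\,4-\mu_x\}.
\]
This gives only $|\F|\le 1+3+9=13$ crudely. We then sharpen it using cross-intersection. The three graphs $L'_{s}(S)$ are pairwise cross-intersecting, because members meeting $S$ in different single points must intersect outside $S$. So we apply the case analysis of \cref{lem:triple-cross-graphs} with degree bound $2$ in place of $k-1$. If two graphs are triangles, they must be the same triangle, and then the pair degrees force few further edges. If one graph is a two-edge star, the others are confined to its centre or to the edge joining its leaves. We expect this to cap $\sum_x|E(L'_x(S))|$ at about $6$, and to cap it further when $s\ge1$, using the fact that the members meeting $S$ in two points must also meet every singleton member. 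The target is $|\F|\le10$. We would organize this as a short case split on $s\in\{0,1,2,3\}$. If the hand analysis leaves a residual case, we would close it by the rooted-block style enumeration of \cref{lem:f34-block}, rerun with the $3$-sunflower test.

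For (ii), take $|\F|=10$. Equality should force the extremal configurations in the above case split, namely specific values of $s$ and of the three singleton graphs. In each such configuration the outside vertices are determined by a few graphs on small vertex sets, and we want to show that at most three outside points occur. We would list the equality configurations as finitely many partial structures: a choice of the internal pair members and the isomorphism types of the three graphs. For each configuration that uses a $7$th support point, we would encode the remaining freedom as a SAT instance. Its variables record which candidate triples are present. Its constraints are intersection, pair degree $\le2$, no $3$-sunflower among any three chosen triples, exactly $10$ members, and the requirement that the support has $\ge7$ points. We then show each instance is unsatisfiable. The paper's count of fifteen certificates suggests that this is exactly the number of residual configurations.

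The main obstacle is (ii). The degree counting pins down sizes, but not which points coincide, so the support question genuinely requires exhausting identifications. We would first try to make the equality analysis in (i) rigid enough, in the spirit of \cref{lem:f34-rigidity}, that few configurations survive before resorting to the solver.
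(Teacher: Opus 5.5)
\paragraph{Comparison with the paper.} Your route differs from the paper's in both parts. The paper gets (i) in one line from the Abbott--Gardner value $f(3,3)=20$ and the disjoint-support split \eqref{eq:split}, which gives $20\ge2\psi(3,3,2)$. For (ii) it does no structural analysis. Instead it runs fifteen anchored SAT instances, one for each exact support size $7,8,\ldots,21$, so the fifteen certificates are not ``residual configurations.'' Your local count around a fixed member $S$ works, and once finished it proves both parts by hand. That removes one of the two computer-assisted inputs to $f(4,3)\le83$ and makes the proposition independent of $f(3,3)=20$. As written, however, the decisive steps are only announced (``we expect'', ``should force''). One input is also wrong: $\CH(2,2)=6$, not $5$, since a link can be two disjoint triangles. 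So the term $\min\{3,4-\mu_x\}$ is unjustified, and the point-degree bound in fact contributes nothing here.

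\paragraph{Completing part (i).} No case split on $s$ is needed. Each $L'_x(S)$ is intersecting, and the three graphs are pairwise cross-intersecting. Hence the distinct edges of $L'_{s_1}(S)\cup L'_{s_2}(S)\cup L'_{s_3}(S)$ form a single intersecting graph: a star or a subgraph of a triangle. In the triangle case, each edge lies in at most two of the three graphs, because pair degrees are at most $2$. In the star case with centre $c$, each $L'_x(S)$ has at most two edges, all through $c$, because the pair $\{x,c\}$ has degree at most $2$. Either way the total is at most $6$, so $|\F|\le1+3+6=10$.

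\paragraph{Completing part (ii).} Equality forces $s=3$ and a total of exactly $6$. Write the two-point members as $(S\setminus\{s_i\})\cup\{y_i\}$. Such a member meets $\{s_i\}\cup e$ only in $\{y_i\}\cap e$, so $y_i$ lies on every edge of $L'_{s_i}(S)$; this is the constraint you already mention. In the star case, every $L'_x(S)$ must have exactly two edges through $c$, so $y_x=c$ for every $x$. Then $\{s_1,c\}$ has degree $4$, which is impossible. In the triangle case, no $L'_x(S)$ can be the whole triangle, since $y_x$ would have to lie on all three edges. So each $L'_x(S)$ has exactly two edges, and $y_x$ is their common vertex. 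Hence $\supp(\F)$ consists of $S$ together with the three triangle vertices, which is six points, and no solver is needed.
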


\begin{proof}
Part~(i), due to Abbott and Hanson \cite[p.~9]{AbbottHanson1974}, follows
directly from \cref{eq:split,eq:f33-exact}, which give
\[
 20=f(3,3)\ge2\psi(3,3,2).
\]

We prove part~(ii) by a finite computation. Fix the anchor $\{0,1,2\}$ and
use one Boolean variable for each triple that meets it. A sequential counter
imposes the cardinality of exactly ten selected triples
\cite{Sinz2005}, and the anchor variable is selected. For each pair of
disjoint triples, a two-literal clause forbids selecting both.
For every pair of points, an at-most-two constraint excludes a sunflower with
that pair as core. For every point, a clause excludes each three-matching in
its link. This excludes a sunflower with a one-point core. Pairwise
intersection already excludes the empty-core case. These are all possible
core sizes for three distinct triples. For an exact-support instance, each
point is also required to occur in a selected triple.

An intersecting family of ten triples has support at most
$3+9\cdot2=21$. The fifteen exact-support instances for support sizes
$7,8,\ldots,21$, each with exactly ten selected triples, are unsatisfiable.
The fifteen unsatisfiability proofs were generated by CaDiCaL
\cite{BiereEtAl2024} and accepted by the independent checker
\texttt{drat-trim} \cite{WetzlerHeuleHunt2014}. The CNF encoder generates the
clauses described above directly. The anchor and support bounds show that
the instances cover every family in part~(ii).
\end{proof}
Equation~\eqref{eq:f1-f2} gives
$f(2,3)=6$. An intersecting graph without a $3$-sunflower has at most
three edges: an intersecting simple graph is a star or a triangle, and a
three-edge star is a sunflower. Hence
\begin{equation}
 \psi(2,3,2)=3,
 \qquad
 \psi(1,3,2)=1.
 \label{eq:f43-small-psi}
\end{equation}
The first value is also recorded by Abbott and Hanson
\cite[p.~9]{AbbottHanson1974}.

\subsection{The closed meeting neighborhood}

Let $\F$ be a $4$-uniform family without a $3$-sunflower, and fix
$R\in\F$. Define
\[
 \mathcal N_{\F}[R]=\{F\in\F:F\cap R\ne\varnothing\}.
\]
For $\varnothing\ne T\subsetneq R$, let
\[
 \C_T(R)=\{F\in\F:F\cap R=T\}
\]
be the exact trace class at $T$.

\begin{lemma}[Exact-trace residues]
\label{lem:f43-exact-trace}
The residues $F\setminus T$, $F\in\C_T(R)$, are pairwise distinct and form
an intersecting, $(4-|T|)$-uniform family without a $3$-sunflower. Consequently,
\begin{equation}
 |\C_T(R)|\le\psi(4-|T|,3,2).
 \label{eq:f43-trace-cap}
\end{equation}
\end{lemma}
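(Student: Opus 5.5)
The plan is to verify the three properties of the residue family directly from the definition of the exact trace class, and then read off \cref{eq:f43-trace-cap} from \cref{eq:psi-definition}. Fix $\varnothing\ne T\subsetneq R$ and put $\mathcal G=\{F\setminus T:F\in\C_T(R)\}$. Every $F\in\C_T(R)$ contains $T$, so $|F\setminus T|=4-|T|\ge1$, and the map $F\mapsto F\setminus T$ is injective on $\C_T(R)$, since $F=(F\setminus T)\cup T$. Hence the residues are distinct and $\mathcal G$ is $(4-|T|)$-uniform with $|\mathcal G|=|\C_T(R)|$. Each residue is disjoint from $R$, because $F\cap R=T$.

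\emph{Absence of a $3$-sunflower.} This is the argument of \cref{lem:link-bound}. If three residues $F_i\setminus T$ formed a sunflower with core $C$, then the sets $F_i=(F_i\setminus T)\cup T$ would be distinct, and their pairwise intersections would all equal $C\cup T$. They would therefore form a $3$-sunflower in $\F$. In fact $\mathcal G\subseteq\Lk_{\F}(T)$, so \cref{lem:link-bound} applies directly.

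\emph{Intersecting property.} This is the step that uses the hypothesis that $T$ is a proper subset of $R$, and I expect it to be the only nontrivial point. Suppose two distinct residues $F_1\setminus T$ and $F_2\setminus T$ were disjoint. Then $F_1\cap F_2=T$, since both members contain $T$ and their residues share no point. Now consider $R$. We have $R\cap F_1=T$ and $R\cap F_2=T$. Also $R\ne F_1,F_2$, because $F_i\cap R=T\ne R$. So $R,F_1,F_2$ are three distinct members whose pairwise intersections all equal $T$, which is a $3$-sunflower with core $T$, a contradiction. Thus $\mathcal G$ is intersecting, that is, $\nu(\mathcal G)\le1$.

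By \cref{eq:psi-definition} with $m=2$, it follows that $|\C_T(R)|=|\mathcal G|\le\psi(4-|T|,3,2)$. The nonemptiness of $T$ is not actually needed for the argument. It only excludes the class of members disjoint from $R$, which the paper treats separately.
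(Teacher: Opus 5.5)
Your proof is correct and follows the paper's argument: injectivity of $F\mapsto F\setminus T$ gives distinctness and uniformity, adjoining $T$ lifts any $3$-sunflower among residues, and two disjoint residues together with $R$ would form a $3$-sunflower with core $T$. Your remark that $T\ne\varnothing$ is never used is also right; the case $T=\varnothing$ is exactly the argument the paper uses for $D_{\F}(R)$ in the proof of the global bound on $f(4,3)$.
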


\begin{proof}
Exactness gives $(F\setminus T)\cap R=\varnothing$. Since $T\subseteq F$
for every $F\in\C_T(R)$, adjoining $T$ recovers $F$; distinct members
therefore give distinct residues. Two disjoint residues, together with $R$,
would lift to a sunflower with core $T$. A sunflower among three residues
would also lift after adjoining $T$.
\end{proof}

For $x\in R$ and $\{x,y\}\in\binom R2$, put
\begin{align*}
 s_x&=|\C_{\{x\}}(R)|,&s&=\sum_{x\in R}s_x,\\
 p_{xy}&=|\C_{\{x,y\}}(R)|,&
 p&=\sum_{\{x,y\}\in\binom R2}p_{xy},&
 r_x&=\sum_{y\in R\setminus\{x\}}p_{xy},\\
 t_m&=|\C_{R\setminus\{m\}}(R)|,&t&=\sum_{m\in R}t_m.
\end{align*}
By \cref{prop:f43-triple-inputs,lem:f43-exact-trace,eq:f43-small-psi},
\begin{equation}
 s_x\le10,
 \qquad p_{xy}\le3,
 \qquad t_m\le1.
 \label{eq:f43-initial-traces}
\end{equation}
Put $h=s+p+t$. The only member with full trace $R$ is $R$ itself, so
\begin{equation}
 |\mathcal N_{\F}[R]|=1+h.
 \label{eq:f43-neighborhood-h}
\end{equation}
The link bound and \cref{eq:f33-exact} give $d_{\F}(x)\le20$ for every
point $x$. Counting incidences with the four points of $R$, apart from the
four incidences contributed by $R$, gives
\begin{equation}
 s+2p+3t\le4(20-1)=76.
 \label{eq:f43-incidence}
\end{equation}

Call $x\in R$ saturated if $s_x=10$, and define
\[
 \Hh_x=\{F\setminus\{x\}:F\in\C_{\{x\}}(R)\},
 \qquad U_x=\supp(\Hh_x).
\]
If $x$ is saturated, \cref{prop:f43-triple-inputs} gives $|U_x|\le6$.
No pair occurs in three members of $\Hh_x$. Counting the pairs in its ten
triples gives
\[
 30\le2\binom{|U_x|}{2}\le30.
\]
Equality holds in both bounds. Thus $|U_x|=6$, every pair of support points has
degree $2$, and every support point has degree $5$. In particular,
$\Hh_x$ is a simple $2$-$(6,3,2)$ design. Five blocks contain any fixed
support point, and two blocks contain any fixed pair. Thus two of the ten
blocks avoid any prescribed pair of support points. In particular, a block
avoids any prescribed set of at most two support points.

\begin{lemma}[Incident pair traces]
\label{lem:f43-incident}
If $x$ is saturated, then $r_x\le6$.
\end{lemma}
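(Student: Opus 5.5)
The plan is to work entirely inside the link $L=\Lk_{\F}(x)$. By \cref{lem:link-bound}, $L$ is a $3$-uniform family with no $3$-sunflower, so $\nu(L)\le2$. It contains the triple $R\setminus\{x\}$ and the ten blocks of $\Hh_x$ on $U_x$. Each member of $\C_{\{x,y\}}(R)$ contributes a triple $\{y\}\cup e$ to $L$, where $y\in R\setminus\{x\}$ and $e$ is a pair disjoint from $R$. So $r_x$ is the number of such triples, and I will bound that number.

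\emph{Step 1: the pairs $e$ avoid $U_x$.} Fix $u\in U_x$ and look at the $2$-uniform link $\Lk_{\F}(\{x,u\})$. It has no $3$-sunflower, so its maximum degree and matching number are both at most $2$. The five blocks of $\Hh_x$ through $u$ give five edges on the five points $U_x\setminus\{u\}$. Since every pair of $U_x$ has degree $2$ in $\Hh_x$, this graph is $2$-regular, hence a $5$-cycle, and it has a two-matching. Now suppose some pair-trace triple $\{y,u,v\}$ exists; it adds the edge $\{y,v\}$, with $y\notin U_x$.
\begin{itemize}
\item If $v\notin U_x$, the edge $\{y,v\}$ together with two disjoint cycle edges is a three-matching, a contradiction.
\item If $v\in U_x$, then $v$ gets degree $3$, a contradiction.
\end{itemize}
Hence every pair $e$ is disjoint from $U_x$, so every triple $\{y\}\cup e$ is disjoint from every block of $\Hh_x$.

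\emph{Step 2: the pairs form an intersecting graph.} Two disjoint triples $\{y\}\cup e$ and $\{y'\}\cup e'$, together with any block of $\Hh_x$, would be a three-matching in $L$. So these triples pairwise intersect. A triple meets $R$ only in its point $y$, so for $y\ne y'$ the pairs $e,e'$ must meet. For $y=y'$, the pairs $e,e'$ meet by \cref{lem:f43-exact-trace}. Therefore the distinct pairs $e$ form an intersecting simple graph, which is a triangle or a star.

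\emph{Step 3: counting.}
\begin{itemize}
\item \emph{Triangle case.} A fixed pair $e$ can occur with at most two values of $y$. Three members $\{x,y\}\cup e$ would be a $3$-sunflower with core $\{x\}\cup e$. So there are at most $3\cdot2=6$ triples.
\item \emph{Star case, centre $c$.} Every triple is $\{y,c,v\}$, so $r_x$ is at most the number of edges of $\Lk_{\F}(\{x,c\})$ joining $R\setminus\{x\}$ to the leaves. This bipartite graph has matching number at most $2$ and maximum degree at most $2$. By K\H{o}nig's theorem it has a vertex cover of size at most $2$, so it has at most $4$ edges.
\end{itemize}
In both cases $r_x\le6$.

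The step I expect to be the main obstacle is Step~1. It is the only place where the design structure of a saturated $\Hh_x$ is used, namely that every pair of $U_x$ has degree exactly $2$, making the link of $\{x,u\}$ a $5$-cycle. If that argument fails, I would fall back on the weaker fact that some block avoids any prescribed set of at most two support points. That would still give three-matchings in $L$, but it needs more case analysis.
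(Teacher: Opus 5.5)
Your proof is correct, but it is organised differently from the paper's.

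\emph{The paper's route.} It shows avoidance of $U_x$ only for a full residue triangle ($p_{xy}=3$), using the point-degree bound $f(2,3)=6$ in $\Lk_{\F}(x)$: a point of $U_x$ would get five occurrences from $\Hh_x$ plus two from the triangle. For every other residue edge it uses only that some block of $\Hh_x$ avoids any two prescribed support points. It then assumes $r_x\ge7$, fixes a label with $p_{xy}=3$, and splits on $(p_{xz},p_{xw})$. Two full triangles must coincide and force the third class into them. In the $(2,2)$ case both classes lie inside the $y$-triangle. Either way some edge occurs under all three labels, giving a sunflower.

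\emph{Your route.} You first prove that every pair-trace residue misses $U_x$ entirely, using the $5$-cycle in $\Lk_{\F}(\{x,u\})$. This is essentially the argument the paper gives later for \eqref{eq:f43-K-avoids} in the proof of \cref{lem:f43-last-profile}. With that in hand, the residue edges over all three labels form one intersecting graph. The star/triangle dichotomy then finishes the proof directly, with no contradiction hypothesis and no case split on the $p_{xy}$. It even gives $r_x\le4$ in the star case. König is more than you need there: each label $y$ has degree at most $2$ in $\Lk_{\F}(\{x,c\})$, which already gives $r_x\le6$.

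\emph{Trade-off.} The paper uses only the counting consequences of the design together with the point-degree bound in the link of $x$. Yours uses the local $5$-cycle structure at each support point, and in exchange gets a shorter, more structural argument.
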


\begin{proof}
For $y\in R\setminus\{x\}$, the residues of
$\C_{\{x,y\}}(R)$ are distinct pairwise-intersecting edges. If
$p_{xy}=3$, they cannot form a three-edge star, since their lifts would form
a sunflower. They therefore form a triangle, with support $V_y$.

Consider the complete link at $x$. Every point in this triple family has
degree at most $f(2,3)=6$. A point in $U_x\cap V_y$ would occur five times
in $\Hh_x$ and twice in the $y$-triangle. Hence
\begin{equation}
 U_x\cap V_y=\varnothing
 \qquad\text{when }p_{xy}=3.
 \label{eq:f43-support-disjoint}
\end{equation}
Thus every edge of the $y$-triangle is disjoint from $U_x$.

Suppose that $r_x\ge7$. Write $R\setminus\{x\}=\{y,z,w\}$ and choose the
labels so that $p_{xy}=3$. Then $p_{xz}+p_{xw}\ge4$.

If $p_{xz}=3$ or $p_{xw}=3$, relabel so that $p_{xz}=3$, and let $\Delta$
be the $y$-triangle. Every edge of the
$z$-triangle meets every edge of $\Delta$. Otherwise those two disjoint
edges would allow us to choose a block of $\Hh_x$ avoiding the at most two
points of the $z$-edge that lie in $U_x$. The $y$-edge is disjoint from
$U_x$, so these would be three disjoint triples in the link at $x$. An edge
meeting all three edges of a triangle is an edge of that triangle, so the two
triangles coincide. The $w$-class is nonempty, since $r_x\ge7$. If one of its residue edges
missed an edge of $\Delta$, choose a block of $\Hh_x$ avoiding its at most two
points in $U_x$. This would again give three disjoint link triples. The
residue edge therefore belongs to $\Delta$. It then occurs under all three
labels $y,z,w$, giving a sunflower in the link.

The remaining case is $(p_{xz},p_{xw})=(2,2)$. The same disjointness
argument places both two-edge residue families inside the three edges of
$\Delta$. Two two-element subsets of a three-element set share an edge.
That edge occurs under $y,z,w$ and gives the same contradiction.
\end{proof}

Let $\sigma$ be the number of saturated points of $R$. Then
\[
 s\le10\sigma+9(4-\sigma)=36+\sigma,
 \qquad \sigma\ge s-36.
\]
For a saturated point, \cref{lem:f43-incident} gives $r_x\le6$; for every
other point, \cref{eq:f43-initial-traces} gives $r_x\le9$. Since each pair
trace is counted at both endpoints,
\begin{equation}
 2p=\sum_{x\in R}r_x
 \le6\sigma+9(4-\sigma)
 \le144-3s.
 \label{eq:f43-pair-aggregate}
\end{equation}
Substitute $s=h-p-t$ into
\cref{eq:f43-incidence,eq:f43-pair-aggregate}. This gives
\[
 p+2t\le76-h,
 \qquad
 p+3t\ge3h-144.
\]
Hence $t\ge4h-220$. Since $t\le4$, one has $h\le56$. Equality forces
\begin{equation}
 (s,p,t)=(40,12,4).
 \label{eq:f43-last-profile}
\end{equation}

Excluding this profile improves the bound on
$|\mathcal N_{\F}[R]|=1+h$ from $57$ to $56$, and hence the final upper
bound on $f(4,3)$ from $84$ to $83$.

\begin{lemma}[Exclusion of the equality profile]
\label{lem:f43-last-profile}
The profile in \cref{eq:f43-last-profile} cannot occur.
\end{lemma}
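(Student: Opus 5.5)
The plan is to read off strong equality information from $(s,p,t)=(40,12,4)$ and then exhibit three pairwise disjoint triples in the link $\Lk_{\F}(x)$ of some point $x\in R$. Such triples form a $3$-sunflower with core $\{x\}$, which is the contradiction.

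\emph{Equality information.} Since $s_x\le10$ for every $x\in R$, the equality $s=40$ makes all four points of $R$ saturated. \Cref{lem:f43-incident} then gives $r_x\le6$, and $2p=\sum_x r_x=24$ forces $r_x=6$ for every $x$. The equality $t=4$ gives $t_m=1$ for every $m\in R$. Let the unique member of $\C_{R\setminus\{m\}}(R)$ be $(R\setminus\{m\})\cup\{u_m\}$ with $u_m\notin R$.

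Fix $x\in R$ and write $R\setminus\{x\}=\{y,z,w\}$. The link $\Lk_{\F}(x)$ then contains three kinds of triples:
\begin{itemize}
\item the ten blocks of $\Hh_x$, which form a $2$-$(6,3,2)$ design on $U_x$ disjoint from $R$;
\item for each $v\in\{y,z,w\}$, the triples $\{v\}\cup e$ with $e$ a residue edge of $\C_{\{x,v\}}(R)$;
\item the three triples $(R\setminus\{x,m\})\cup\{u_m\}$ for $m\ne x$.
\end{itemize}
By construction, no block of $\Hh_x$ and no residue edge contains a point of $R$.

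\emph{Case: some $p_{xy}=3$.} By the argument in \cref{lem:f43-incident}, the residues of $\C_{\{x,y\}}(R)$ form a triangle $V_y$ with $V_y\cap U_x=\varnothing$. Take the trace triple $\{z,w,u_y\}$. At least one edge $e$ of the triangle avoids $u_y$, and $e$ is disjoint from $U_x$ and from $R$. Since at most one prescribed support point ($u_y$) must be avoided, some block $B\in\Hh_x$ avoids $u_y$. Then $\{z,w,u_y\}$, $\{y\}\cup e$ and $B$ are pairwise disjoint: the first two are separated because $e$ avoids $u_y$ and $R$; $B$ lies in $U_x$, misses $R$, misses $u_y$, and misses $e$. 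This is a $3$-sunflower with core $\{x\}$, so the case is impossible.

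\emph{Case: no $p_{xy}=3$.} Since this must hold at every point of $R$ and $r_x=6$ with $p_{xy}\le2$, every one of the six pair classes satisfies $p_{xy}=2$. Each class then has two intersecting residue edges $\{a,b\},\{a,c\}$; they cannot be disjoint, since together with $R$ their lifts would be a sunflower. I would repeat the same triple construction $\{z,w,u_y\}$, $\{y\}\cup e$, $B$, choosing $e$ to avoid $u_y$, which is always possible unless $u_y=a$.

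The needed block $B$ exists whenever $(e\cup\{u_y\})\cap U_x$ has at most two points. To control the intersection I will use the point-degree bound $6=f(2,3)$ in $\Lk_{\F}(x)$. Every point of $U_x$ already has degree $5$ there, so it lies in at most one further link triple. Consequently each point of $U_x$ is used by at most one residue edge or one $u_m$ in the whole link at $x$. Across the three $y$-choices (any of $y,z,w$ may play the role of the pair index), the six residue edges and three $u_m$'s then compete for the limited spare degree in $U_x$, and I expect to find a good choice of $(y,e)$ by pigeonhole.

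If all attempts fail, a residual configuration remains: $e\subseteq U_x$ and $u_y\in U_x$ with $e\cup\{u_y\}$ a $3$-set. I would then count blocks avoiding this $3$-set by inclusion--exclusion: $10-3\cdot5+3\cdot2-\lambda_3=1-\lambda_3$, where $\lambda_3\in\{0,1\}$ records whether the $3$-set is itself a block. A block exists unless the $3$-set is a block. In that subcase the degree count is violated, since a point of the block would have link degree at least $5+2$.

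The main obstacle is this last case, where every pair class has size $2$. The bookkeeping of which residue points and which $u_m$ fall inside $U_x$, across all four choices of $x$ simultaneously, is where I expect the argument to need care. I would first try to settle it purely through the degree-$6$ slack in each link. If that proves insufficient, the fallback is to use the pair-degree bound: every pair has degree at most $2$ in $\F$, since three members through a pair form a $3$-sunflower.
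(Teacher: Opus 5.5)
Your equality bookkeeping and your case with some $p_{xy}=3$ are correct. In the remaining case, the obstruction you spend most effort on cannot occur. The common point $a_{xy}$ of the two residue edges of $\C_{\{x,y\}}(R)$ already lies in two triples of $\Lk_{\F}(x)$, so $a_{xy}\in U_x$ would give it link degree $7>f(2,3)$. Hence each residue edge meets $U_x$ in at most one point, and a block of $\Hh_x$ avoiding $e\cup\{u_y\}$ always exists.

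The genuine gap is the case you set aside, $u_y=a_{xy}$, where both residue edges meet $\{z,w,u_y\}$. This is not a degenerate leftover; it is exactly the structure the paper derives. With $z_m=u_m$, \eqref{eq:f43-x-view} says that the residue edges of $\C_{\{x,y\}}(R)$ are $\{u_y,u_z\}$ and $\{u_y,u_w\}$. In that configuration $\Lk_{\F}(x)=\Hh_x\cup\K_x$ consists of two intersecting $2$-$(6,3,2)$ designs on disjoint supports, so it has matching number $2$ and maximum degree $5$. No pigeonhole over the three indices $y$ at a fixed $x$, and no count inside a single link, can give a contradiction. Your fallback is also false. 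In a $4$-uniform family, three members through a pair need not form a sunflower: here $\{z,w\}$ lies in $R$, $(R\setminus\{x\})\cup\{u_x\}$ and $(R\setminus\{y\})\cup\{u_y\}$. Only $3$-sets have degree at most $2$.

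The missing idea is to compare two links. The residue family of $\C_{\{x,y\}}(R)$ is the same whether seen from $x$ or from $y$, so $a_{xy}=a_{yx}$. Your construction at $x$ with index $y$ fails only if $u_y=a_{xy}$, and at $y$ with index $x$ only if $u_x=a_{xy}$. If it fails for every ordered pair, then all four $u_m$ equal one point $u$. That point lies in the three trace triples and all six pair-trace triples of $\Lk_{\F}(x)$, giving degree $9>6$. Otherwise some ordered pair produces three disjoint link triples.

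With this step added your argument is complete, and it is shorter than the paper's. The paper instead shows that every $\K_x$ has ten members and applies \cref{prop:f43-triple-inputs}(ii) to make it a $2$-$(6,3,2)$ design. It then deduces that the $z_m$ are distinct and contradicts the two views of $\C_{\{x,y\}}(R)$.
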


\begin{proof}
Assume \cref{eq:f43-last-profile}. Every singleton class is saturated, and
every triple trace is occupied once. Write
\[
 T_m=(R\setminus\{m\})\cup\{z_m\},
 \qquad z_m\notin R
 \quad(m\in R).
\]
For $x\in R$, define
\[
 \K_x=\{F\setminus\{x\}:x\in F\in\F,\ |F\cap R|\ge2\}.
\]

Every member $K\in\K_x$ contains at least one point of
$R\setminus\{x\}$. The support $U_x$ is disjoint from $R$. Hence
$|K\cap U_x|\le2$. If the intersection is a pair, the two design blocks
through that pair, together with $K$, form a sunflower. If it is a singleton
$\{u\}$, the five design blocks through $u$ induce a $2$-regular simple graph
on the other five support points. This graph is a $5$-cycle. Two disjoint
edges of the cycle give two design blocks meeting exactly in $u$; together
with $K$ they form a sunflower. Thus
\begin{equation}
 K\cap U_x=\varnothing\qquad(K\in\K_x).
 \label{eq:f43-K-avoids}
\end{equation}

The family $\K_x$ is intersecting. Two disjoint members, together with any
block of $\Hh_x$, would be three disjoint triples in the link at $x$.
Counting over all $x\in R$ gives
\[
 \sum_{x\in R}|\K_x|=4+2p+3t=40.
\]
Each $\K_x$ is an intersecting triple family without a $3$-sunflower, so
\cref{prop:f43-triple-inputs} bounds it by $10$. Hence every $\K_x$ has
size $10$. By \cref{prop:f43-triple-inputs}(ii), its support has at most six
points. No pair occurs in three members of $\K_x$, since those members would
form a sunflower with that pair as core. The ten triples contain $30$ pair
occurrences. Therefore
\[
 30\le2\binom{|\supp(\K_x)|}{2}\le30.
\]
Equality holds in both bounds. Thus the support has exactly six points, every
pair of support points has degree $2$, and $\K_x$ is a simple
$2$-$(6,3,2)$ design.

Fix $x$ and put $Y=R\setminus\{x\}$. The design $\K_x$ contains
\[
 Y,
 \qquad
 B_m=(Y\setminus\{m\})\cup\{z_m\}
 \quad(m\in Y).
\]
Its support has six points, so the three $z_m$, $m\in Y$, lie in the
three-point complement of $Y$. They are distinct. Indeed, suppose that
$z_i=z_j=q$ and let $y$ be the third point of $Y$. The pairs
$\{y,i\}$, $\{y,j\}$, and $\{y,q\}$ already occur twice among the displayed
blocks. The two remaining design blocks through $y$ must avoid $i,j,q$.
Both would therefore consist of $y$ and the other two outside support points,
contradicting distinctness.

Write $Y=\{y,a,b\}$. The two design blocks through $y$ not displayed above
come from $\C_{\{x,y\}}(R)$. The pair $\{y,z_y\}$ occurs in none of the
displayed blocks, so both new blocks contain $z_y$. The pairs
$\{y,z_a\}$ and $\{y,z_b\}$ each occur once and each needs one further
occurrence. Their residue edges are therefore
\begin{equation}
 \bigl\{\{z_y,z_a\},\{z_y,z_b\}\bigr\}.
 \label{eq:f43-x-view}
\end{equation}
For any two indices in $R$, choose $x$ outside them. The preceding argument
shows that their $z$-points are distinct. Thus all four points $z_m$ are
distinct.

Finally fix distinct $x,y$ and write $R\setminus\{x,y\}=\{a,b\}$. The
$x$-view of the exact pair class is \cref{eq:f43-x-view}. Its $y$-view is
\[
 \bigl\{\{z_x,z_a\},\{z_x,z_b\}\bigr\}.
\]
Both views describe
\[
 \{F\setminus\{x,y\}:F\in\C_{\{x,y\}}(R)\},
\]
so they must be equal. They cannot be equal because the four $z$-points are
distinct.
\end{proof}

\begin{proposition}[Meeting-neighborhood bound]
\label{prop:f43-neighborhood}
For every $R\in\F$,
\begin{equation}
 |\mathcal N_{\F}[R]|\le56.
 \label{eq:f43-N56}
\end{equation}
\end{proposition}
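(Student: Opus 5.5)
The bound follows by assembling the pieces already established. By \cref{eq:f43-neighborhood-h} it suffices to show $h=s+p+t\le55$. The two linear constraints \cref{eq:f43-incidence,eq:f43-pair-aggregate} give $p+2t\le76-h$ and $p+3t\ge3h-144$. Subtracting yields $t\ge4h-220$. Combined with $t\le4$ from \cref{eq:f43-initial-traces}, this gives $h\le56$.

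Next we check the equality case. If $h=56$, then $t\ge4$ forces $t=4$. The two inequalities then become $p\le12$ and $p\ge12$, so $p=12$ and $s=h-p-t=40$. This is exactly the profile \cref{eq:f43-last-profile}. Before using this, I would verify that the chain leading to \cref{eq:f43-pair-aggregate} is sharp only when its intermediate inequalities are equalities. This is consistent with the profile: $s=40$ makes every point saturated, so $\sigma=4$. It is also consistent with the lemma's hypotheses, which are only the numerical values $(s,p,t)=(40,12,4)$.

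\Cref{lem:f43-last-profile} excludes that profile, so $h\le55$ and $|\mathcal N_{\F}[R]|=1+h\le56$. The only care needed is to use integrality, so that $h\le56$ with $h\ne56$ gives $h\le55$. No step is a real obstacle, since all the work was done in \cref{lem:f43-incident,lem:f43-last-profile} and \cref{prop:f43-triple-inputs}. The proof is a short bookkeeping argument recalling these inequalities and invoking the exclusion lemma.
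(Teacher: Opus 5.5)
Your proposal is correct and follows the paper's proof: substituting $s=h-p-t$ into the incidence and pair-aggregate inequalities gives $t\ge4h-220$, hence $h\le56$ with equality only at $(s,p,t)=(40,12,4)$; \cref{lem:f43-last-profile} excludes that profile, so integrality gives $h\le55$ and $|\mathcal N_{\F}[R]|=1+h\le56$. The extra sharpness check you mention is harmless but unnecessary, because the exclusion lemma needs only the numerical profile.
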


\begin{proof}
The preceding inequalities give $h\le56$, and
\cref{lem:f43-last-profile} excludes equality. Thus $h\le55$.
Apply \cref{eq:f43-neighborhood-h}.
\end{proof}

\subsection{The intersecting extremum}

To determine $\psi(4,3,2)$, it remains to prove its upper bound. This follows
from an exhaustive enumeration of the objects defined below.

The decision problem asks whether there is a family with exactly $28$
distinct members, called rows below, satisfying the following conditions:
\begin{enumerate}
\item every row has size four;
\item every two rows intersect;
\item no three rows have equal pairwise intersections.
\end{enumerate}
The ground set is not fixed. If such a family existed, its support would
have size at most
\begin{equation}
 4+27\cdot3=85,
 \label{eq:f43-support85}
\end{equation}
because every row after the first meets the first row.

A partial family is represented by its two-colored bipartite incidence
graph. Row vertices and point vertices are the two color classes. The first
row is fixed as $\{0,1,2,3\}$. If a parent uses the point labels
$0,\ldots,C-1$, a candidate row is generated by choosing an $s$-subset of
the old labels, for $0\le s\le4$, and adjoining the fresh consecutive labels
\[
 C,C+1,\ldots,C+(4-s)-1.
\]
This list contains every extension up to relabelling points not used by the
parent.

A candidate is rejected if it repeats a row, misses a prior row, or completes
a $3$-sunflower. For prior rows $A,B$ and candidate $S$, the last test is
\[
 A\cap B\subseteq S,
 \qquad S\cap A\subseteq B,
 \qquad S\cap B\subseteq A.
\]
These three inclusions are equivalent to
$A\cap B=A\cap S=B\cap S$. All rejection conditions are hereditary under
row deletion.

The program uses nauty to compute canonical labels, vertex orbits, and
automorphisms of the colored incidence graph
\cite{McKay1998,McKayPiperno2014}. Parent automorphisms reduce the candidate
rows to orbit representatives. After a candidate is appended, the program
selects the last row in the canonical row order. The child is accepted when
the appended row belongs to the same full child-automorphism orbit as this
canonical row. Deleting a row in that orbit, and then deleting points used
only by that row, defines the canonical parent.

\begin{lemma}[Canonical construction coverage]
\label{lem:f43-canonical-coverage}
Every isomorphism class of valid partial families occurs on an accepted
construction path. Every valid $28$-member family, if one exists, lies below
an emitted valid depth-$6$ root.
\end{lemma}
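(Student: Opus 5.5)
The proof follows the standard McKay canonical-augmentation argument, by induction on the number of rows. The key facts are that the rejection conditions are hereditary and that the canonical parent of a valid family is again valid, up to relabelling. Fix a valid partial family $\F$ with $n\ge2$ rows containing a row isomorphic to the fixed first row, and consider its colored incidence graph. Let $S$ be the last row in the canonical row order of $\F$. Deleting $S$, together with the points used only by $S$, gives a family $\F'$. This family is valid because repetition, non-intersection and $3$-sunflowers are all preserved under row deletion. By the induction hypothesis, some family isomorphic to $\F'$ lies on an accepted path. Transporting the isomorphism, I will show that the generation step applied to that representative produces a candidate row $S'$ such that the extended family is isomorphic to $\F$. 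This holds because $S'$ consists of some $s$ old labels together with $4-s$ fresh consecutive labels, and every extension is of this form up to relabelling points unused by the parent. The orbit pruning under parent automorphisms preserves this property, since replacing $S'$ by its orbit representative yields an isomorphic child.

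Next I check acceptance. Acceptance depends only on whether the appended row lies in the automorphism orbit of the canonical last row of the child. Both the canonical last row and the orbit structure are isomorphism invariants, computed by nauty on the colored graph, where rows and points carry distinct colors so that isomorphisms respect the bipartition. The row $S'$ corresponds to $S$ under the isomorphism, so it lies in the required orbit, and the child is accepted. One subtlety must be handled: the first row is pinned as $\{0,1,2,3\}$. I will argue that for $n\ge2$ the choice of which row is canonical-last does not interfere with this. If the canonical last row were the anchor, deleting it still leaves a valid family, and any of its rows can be relabelled to $\{0,1,2,3\}$, since all rows are symmetric in the definition. So the induction is over isomorphism classes rather than over labelled anchors. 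The base case $n=1$ is the single fixed row.

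For the second sentence of the statement, take a valid $28$-member family, if one exists. By \cref{eq:f43-support85} its support is finite, and by the first part it occurs at depth $28$ on an accepted path. The depth-$6$ ancestor on that path is a valid partial family, since validity is hereditary, and it is emitted as a root. Hence the $28$-member family lies below it.

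The main obstacle is establishing the independence of the canonical-parent choice from the labelling. Concretely, I need that deleting any row in the automorphism orbit of the canonical last row gives isomorphic parents, so that orbit-based acceptance is well defined. I will first try the direct argument that an automorphism mapping one such row to another induces an isomorphism between the two deletions. I also need to check that removing points used only by the deleted row is compatible with the fresh-label convention.
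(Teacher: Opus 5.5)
Your proposal is correct and follows essentially the same route as the paper's proof. Both argue by induction on the number of rows via canonical deletion, use heredity of the rejection conditions, generate an isomorphic candidate from old labels plus fresh consecutive labels, use orbit-invariance of the acceptance test, and pass to the depth-$6$ ancestor along the accepted path.

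The paper adds one implementation detail: parent-orbit pruning uses only the subgroup generated by at most $400$ stored generators. Your replacement-by-a-parent-automorphism step already covers this, since subgroup orbits refine full orbits. The acceptance test uses nauty's full child orbits, as your argument requires.
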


\begin{proof}
Proceed by induction on the number of rows. The one-row family has the fixed
representative $\{0,1,2,3\}$. Let $\mathcal G$ be a valid child. Delete a
row in the orbit selected by its canonical labeling, together with points
used only by that row. The resulting canonical parent $\mathcal P$ remains
valid.

By induction, a normalized copy of $\mathcal P$ occurs. The deleted row is
an old-label subset together with fresh points, so an isomorphic candidate is
generated. Parent-orbit reduction retains a representative of its orbit,
and the appended row lies in the canonical deletion orbit of the resulting
child. The child is therefore accepted.

The implementation stores at most $400$ generator permutations.
The stored automorphisms generate a subgroup of the full parent automorphism
group, and subgroup orbits refine full orbits. The limit may duplicate work,
but it preserves at least one representative of every extension. The
canonical-deletion test uses nauty's full orbit output rather than the stored
generator list.

Every accepted object deeper than six rows has an accepted depth-$6$
ancestor obtained by repeated canonical deletion. This proves the second
statement.
\end{proof}

The implementation permits $128$ point columns, $56$ rows, and $184$
vertices in the incidence graph. A target needs at most $28+85=113$
vertices, so these capacities do not restrict the decision problem.

The depth-$6$ generation visits $67{,}193$ search-tree nodes and produces
$11{,}720$ roots. Exhausting the subtrees below these roots visits
$105{,}917{,}089{,}577$ search-tree nodes and finds no $28$-member family. A
separate C verifier reconstructs the depth-$6$ frontier and checks that the
results contain one completed record for every root, with the stated total
node count.

\begin{proposition}[Intersecting upper bound]
\label{prop:f43-intersecting-upper}
Every intersecting, $4$-uniform family without a $3$-sunflower has at most
$27$ members.
\end{proposition}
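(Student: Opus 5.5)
The plan is to argue by contradiction and reduce to the finite decision problem just described. Suppose an intersecting, $4$-uniform family $\F$ without a $3$-sunflower has at least $28$ members. Deleting members preserves all three defining conditions: uniformity, pairwise intersection and the absence of a $3$-sunflower. So we may pass to a subfamily with exactly $28$ distinct rows. Such a subfamily is a positive instance of the decision problem. Its support has at most $85$ points by \cref{eq:f43-support85}, so its incidence graph has at most $113$ vertices. This fits within the stated capacities of $128$ point columns, $56$ rows and $184$ vertices, so the implementation limits impose no restriction.

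Next, I relabel so that the first row is $\{0,1,2,3\}$ and invoke \cref{lem:f43-canonical-coverage}. Every valid partial family obtained from the $28$-member instance by canonical deletion is itself valid, because every rejection condition is hereditary under row deletion. By the lemma, some isomorphic copy of the instance therefore lies on an accepted construction path. In particular, it lies below one of the $11{,}720$ emitted depth-$6$ roots. Three points are needed to make this rigorous. First, the candidate-generation step, which chooses an old-label subset and adjoins fresh consecutive labels, produces every extension up to relabelling. Second, parent-orbit pruning with a possibly incomplete generator list only refines orbits, so no extension is lost. Third, the canonical-deletion acceptance test uses the full orbit information from nauty. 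All three facts are already asserted in the proof of the lemma, so I would simply cite them.

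Finally, the exhaustive search below all $11{,}720$ roots, comprising $105{,}917{,}089{,}577$ nodes, finds no valid $28$-member family. The independent C verifier confirms that every root has a completed record and that the node total is as stated. This contradicts the existence of $\F$, so $|\F|\le27$. Combined with a $27$-member witness, this would give $\psi(4,3,2)=27$, but only the upper bound is needed here.

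The main obstacle is not the mathematics but the trustworthiness of the coverage claim. One must check that the on-the-fly sunflower test, namely the three inclusions, is equivalent to equal pairwise intersections. One must also check that the intersection and distinctness tests are applied against every prior row, so that the rejection criteria are exactly the defining conditions and nothing stronger. I would state explicitly that no rejection condition beyond these three, together with distinctness, is used. Any stronger pruning would break the hereditary argument, since a rejected partial family could then be a canonical ancestor of a valid $28$-member family.
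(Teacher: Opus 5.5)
Your proposal is correct and follows the paper's proof essentially verbatim: pass to a $28$-member subfamily, use the support bound \eqref{eq:f43-support85} to fit within the program capacities, invoke \cref{lem:f43-canonical-coverage} to place an isomorphic copy below one of the $11{,}720$ depth-$6$ roots, and conclude from the exhaustive search. Your extra checks also appear in the paper just before the lemma: that the rejection tests are hereditary, and that the three inclusions are equivalent to equal pairwise intersections.
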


\begin{proof}[Computer-assisted proof]
Suppose that a $28$-member family existed. Equation~\eqref{eq:f43-support85}
places it within the program capacities. By
\cref{lem:f43-canonical-coverage}, an isomorphic copy lies below one of the
$11{,}720$ depth-$6$ roots. The exhaustive search of every such subtree
finds no target. This is a contradiction. A larger family would contain a
$28$-member subfamily.
\end{proof}

For the reverse inequality, partition nine points into three disjoint
three-point sets $X_0,X_1,X_2$. For every unordered pair
$\{i,j\}\subseteq\{0,1,2\}$, take all sets
\begin{equation}
 A_i\cup A_j,
 \qquad
 A_i\in\binom{X_i}{2},
 \quad A_j\in\binom{X_j}{2}.
 \label{eq:f43-AHS27}
\end{equation}
There are $3\cdot3\cdot3=27$ such sets. This is the construction of Abbott
and Hanson \cite[p.~9]{AbbottHanson1974}.

\begin{proposition}[Intersecting extremum]
\label{prop:f43-intersecting-exact}
One has
\begin{equation}
 \psi(4,3,2)=27.
 \label{eq:f43-psi27}
\end{equation}
\end{proposition}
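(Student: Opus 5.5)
The upper bound $\psi(4,3,2)\le27$ is exactly \cref{prop:f43-intersecting-upper}. It therefore remains to show that the $27$-member family in \cref{eq:f43-AHS27} is intersecting, $4$-uniform, has distinct members and contains no $3$-sunflower. Uniformity and distinctness are immediate: each member is the union of two $2$-subsets of distinct parts, and the parts together with the two $2$-subsets can be read back from the member.

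\emph{Intersecting.} Take members $A_i\cup A_j$ and $A'_k\cup A'_l$. Their index pairs $\{i,j\}$ and $\{k,l\}$ are $2$-subsets of $\{0,1,2\}$, so they share some index $m$. Both members then contain a $2$-subset of the $3$-set $X_m$, and two such subsets always meet. Hence the two members intersect.

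\emph{No $3$-sunflower.} Suppose $F_1,F_2,F_3$ form a sunflower with core $C$. Intersect everything with a part $X_m$. The traces $F_a\cap X_m$ are each either empty or a $2$-subset of $X_m$. Their pairwise intersections all equal $C\cap X_m$, so by \cref{lem:occurrence} applied within $X_m$, every point of $X_m$ is covered by exactly one trace, by all traces, or by none.

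Now let $\Sigma_m$ be the set of indices $a$ with $F_a\cap X_m\neq\varnothing$.
\begin{itemize}
\item If $|\Sigma_m|\ge2$, two $2$-subsets of a $3$-set that are distinct meet in exactly one point. Equal traces are also allowed.
\end{itemize}
I would case on the traces present in $X_m$. Three nonempty traces that are pairwise distinct would all be $2$-subsets of $X_m$ with pairwise intersections of size one. These must all equal the same point, which is impossible, since the three $2$-subsets of a $3$-set have pairwise different intersections. So among the nonempty traces on $X_m$, either all are equal, or there are exactly two distinct ones.

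\begin{itemize}
\item If exactly two traces are nonempty and distinct, they meet in one point $u$. The third trace is empty, so $u$ is not in all three members. This contradicts the occurrence rule.
\item If a nonempty trace is shared by two members and a third member has the empty trace, the shared $2$-set lies in $C$ but is missing from the third member, again a contradiction.
\item Two equal traces together with a third distinct nonempty trace give core trace of size $2$ against an intersection of size $1$, a contradiction.
\end{itemize}

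Hence, in each part, either all three traces are equal, or at most one trace is nonempty. Each member has nonempty traces in exactly two parts, so there are $6$ nonempty traces in total. A part of the second kind holds at most one of them, and a part of the first kind holds $0$ or $3$. Writing $6$ as such a sum over three parts forces two parts of the first kind with $3$ nonempty traces each. The three members then agree on both of those parts, so they are equal, a contradiction. Thus $\psi(4,3,2)\ge27$, and combined with \cref{prop:f43-intersecting-upper} this gives equality.

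The main obstacle is the trace case analysis inside a single part. The key point is that a $3$-set admits no $3$-sunflower of $2$-subsets, together with careful handling of empty traces.
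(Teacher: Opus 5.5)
Your proof is correct. The upper bound and the intersecting property are handled exactly as in the paper. For the absence of a $3$-sunflower, however, you take a different route.

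The paper splits into cases according to the pattern of types of the three members:
\begin{itemize}
\item If all three members have the same type, equal pairwise intersections inside each used part force the chosen $2$-sets to coincide.
\item If exactly two share a type, the part they use and the third member does not contains points of their intersection but of neither other intersection.
\item If the types are $X_0X_1$, $X_0X_2$, $X_1X_2$, the three pairwise intersections are nonempty and lie in different parts.
\end{itemize}

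You instead first prove a local classification. Three sets, each empty or a $2$-subset of a $3$-set, with equal pairwise intersections, are either all equal or have at most one nonempty member. You then finish with a global count: the six nonempty traces must split as $3+3+0$, so the three members agree on two parts and are equal.

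The paper's version is shorter and tailored to this specific configuration. Yours separates a reusable local fact from a counting step and treats empty and repeated traces uniformly, at the cost of a longer case analysis.

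One small point: the occurrence rule as stated concerns a sunflower of distinct sets, while your traces may be empty or coincide. Its one-line proof still applies verbatim: a point lying in two traces lies in their intersection $C\cap X_m$, hence in every trace. So nothing breaks, but you should invoke that argument rather than cite the lemma itself.
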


\begin{proof}
Two members of the family in \cref{eq:f43-AHS27} use two of the three point
sets, so their types share one point set. Any two pairs in a three-point set
meet. The family is therefore intersecting.

Consider three members. If their three point-set types are equal, equal
pairwise intersections in each used point set would force all three chosen
pairs to be equal. The three members would then be equal. If exactly two
types are equal, one of the two point sets used by those members is not used
by the third. The two members intersect in that point set, whereas neither
of the other two intersections contains a point from it. If the types are
$X_0X_1$, $X_0X_2$, and $X_1X_2$, their
three pairwise intersections are nonempty subsets of $X_0,X_1,X_2$,
respectively. They are not equal. Thus the family contains no
$3$-sunflower.

This proves the lower bound $27$. The reverse inequality is
\cref{prop:f43-intersecting-upper}.
\end{proof}

\subsection{The global bounds}

Take two copies of the family in \cref{eq:f43-AHS27} on disjoint supports.
Their union has $54$ members. Three members chosen from one copy do not form
a sunflower. If three members use both copies, two lie in the same copy and
meet, whereas their intersections with a member of the other copy are empty.
Thus their three pairwise intersections are not equal.

\begin{theorem}
\label{thm:f43-bounds}
One has
\begin{equation}
 54\le f(4,3)\le83.
 \label{eq:f43-bounds}
\end{equation}
\end{theorem}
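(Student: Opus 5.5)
The lower bound needs no new work. The union of two copies of the family in \cref{eq:f43-AHS27} on disjoint supports has $54$ members and, by the argument just given, contains no $3$-sunflower. Hence $f(4,3)\ge54$.

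For the upper bound, I will take a $4$-uniform family $\F$ without a $3$-sunflower and show $|\F|\le83$. If $\F$ is empty there is nothing to prove, so I fix any member $R\in\F$ and split $\F$ into two disjoint parts:
\[
 \F=\mathcal N_{\F}[R]\ \dd\ \D,
 \qquad
 \D=\{F\in\F:F\cap R=\varnothing\}.
\]
\Cref{prop:f43-neighborhood} gives $|\mathcal N_{\F}[R]|\le56$ directly.

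To bound $\D$, I show that it is intersecting. Suppose two members $F,G\in\D$ were disjoint. Then $F$, $G$ and $R$ would be three pairwise disjoint distinct sets, that is, a $3$-sunflower with empty core, which is impossible. So $\D$ is an intersecting $4$-uniform family, and as a subfamily of $\F$ it has no $3$-sunflower. Therefore $\D$ is counted by $\psi(4,3,2)$, and \cref{prop:f43-intersecting-upper} (equivalently \cref{eq:f43-psi27}) gives $|\D|\le27$.

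Adding the two bounds gives $|\F|\le56+27=83$. Since $\F$ was arbitrary, $f(4,3)\le83$.

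There is no genuine obstacle at this stage. All the difficulty sits in results already stated: the neighborhood bound, which relies on \cref{lem:f43-last-profile} and the SAT certificates in \cref{prop:f43-triple-inputs}, and the computer-assisted bound $\psi(4,3,2)\le27$. The only point to check is that the disjoint part is intersecting, and that follows from the empty-core case of the sunflower condition applied together with $R$.
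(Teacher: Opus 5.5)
Your proposal is correct and follows the paper's proof. The lower bound uses two disjoint copies of the $27$-member family, and the upper bound fixes $R\in\F$, notes that the members disjoint from $R$ form an intersecting family (two disjoint ones together with $R$ would be an empty-core $3$-sunflower), and adds the bound $56$ from \cref{prop:f43-neighborhood} to the bound $27$ from \cref{prop:f43-intersecting-upper}.
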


\begin{proof}
The preceding two-copy construction gives the lower bound. For the upper
bound, let $\F$ be a $4$-uniform family without a $3$-sunflower and choose
$R\in\F$. Put
\[
 D_{\F}(R)=\{F\in\F:F\cap R=\varnothing\}.
\]
If two members of $D_{\F}(R)$ were disjoint, those two members and $R$ would
form a sunflower with empty core. Hence $D_{\F}(R)$ is intersecting. By
\cref{prop:f43-neighborhood,prop:f43-intersecting-exact},
\[
 |\F|=|\mathcal N_{\F}[R]|+|D_{\F}(R)|\le56+27=83.
\]
\end{proof}

\subsection{Computational verification}

Two finite computations enter the upper bound in \cref{thm:f43-bounds}.
First, part~(ii) of \cref{prop:f43-triple-inputs} uses fifteen CNF instances
whose unsatisfiability proofs are checked by \texttt{drat-trim}; the anchor
and support bounds in its proof establish coverage. Second,
\cref{prop:f43-intersecting-upper} uses canonical augmentation, with coverage
proved in \cref{lem:f43-canonical-coverage}, to exhaust all $11{,}720$
depth-$6$ roots. The supplementary material contains the search source, the manifests, the
results, the frontier verifier, and the control programs.
The search establishes the exclusion below each root. The verifier
reconstructs the frontier and checks one completed result for each root and
the total node count.

\section{Lower bounds for six and seven petals}
\label{sec:triple-lower-six-seven}

This section proves $f(3,6)\ge153$ and $f(3,7)\ge259$. Combined with
\cref{cor:triple-upper-values}, these bounds give $153\le f(3,6)\le255$ and
$259\le f(3,7)\le474$. Both constructions place two families on disjoint
supports: a first component with matching number at most $k-2$, and an
intersecting second component. For six petals the first component is a
cyclic family of $126$ triples on $14$ points; for seven petals it is a
family of $217$ triples on $17$ points. The second components have $27$ and
$42$ members, respectively.

Equation~\eqref{eq:FW-intersecting-triples} gives
\[
 \psi(3,6,2)=27,
 \qquad
 \psi(3,7,2)=42.
\]
Thus the $27$- and $42$-member intersecting families used as the second
components below are optimal. Within these two-component disjoint-support
constructions, any further improvement must come from the first component.

\subsection{Six petals}

Work in $\Z_{14}$. Let $\Hh$ consist of all translates modulo $14$ of the
nine triples
\begin{equation}
\begin{split}
 &\{0,1,5\},\ \{0,1,6\},\ \{0,1,7\},\ \{0,1,9\},\ \{0,1,10\},\\
 &\{0,10,12\},\ \{0,2,6\},\ \{0,2,9\},\ \{0,2,10\}.
\end{split}
\label{eq:f36-seeds}
\end{equation}

\begin{lemma}
\label{lem:f36-cyclic-family}
The family $\Hh$ has $126$ members. It is $27$-regular, has matching
number at most $4$, and contains no $6$-sunflower.
\end{lemma}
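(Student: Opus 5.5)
We verify the four assertions in turn, using the cyclic symmetry $x\mapsto x+1$ of $\Z_{14}$ so that every check reduces to finitely many orbit representatives.

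\emph{Size.} First we check that each of the nine seeds in \cref{eq:f36-seeds} has trivial stabilizer under translation. A triple fixed by a nonzero shift $d$ would be a union of cosets of $\langle d\rangle$. Such a coset has size dividing $14$, so it has size $1$, $2$, $7$ or $14$, and a $3$-set cannot be a union of cosets of size at least $2$. Hence each orbit has exactly $14$ members. Next we check that the nine orbits are distinct. For each seed we list its difference multiset, or better its normalized form (the lexicographically least translate). Comparing the nine normal forms shows that no seed is a translate of another. This gives $9\cdot14=126$ members.

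\emph{Regularity.} Each orbit of $14$ triples contributes $3\cdot14/14=3$ to every point's degree. Summing over the nine orbits gives degree $27$ at every point.

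\emph{No $6$-sunflower.} By \cref{lem:occurrence}, the core of a sunflower of triples has size $2$, $1$ or $0$; we exclude each in turn.
\begin{itemize}
\item[(a)] \emph{Core of size $2$.} Every pair $\{0,d\}$ with $1\le d\le7$ must have degree at most $5$. We compute pair degrees from the seed differences: $\{0,d\}$ lies in one translate of a seed for each ordered occurrence of the difference $\pm d$ within that seed. We tabulate these counts, taking care with $d=7$, where $\pm d$ coincide.
\item[(b)] \emph{Core of size $1$.} By translation it suffices to consider the point $0$. Its link is a $27$-edge graph on $13$ vertices, and we need this graph to have maximum degree at most $5$ (this is just (a)) and matching number at most $5$. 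Since $13$ vertices already force $\nu\le6$, it suffices to exhibit a vertex cover–type obstruction. By Tutte–Berge, we look for a set $S$ whose removal leaves many odd components, or more simply we show that no perfect matching of $12$ of the $13$ vertices exists. I would compute the link explicitly and find a small Tutte set, or just run a matching check.
\end{itemize}

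\emph{Matching number.} This is the main obstacle: we must show $\nu(\Hh)\le4$, i.e.\ that there are no five pairwise disjoint triples among the $126$. A counting argument alone does not suffice, since $15\le14$ already rules out exactly five disjoint triples covering $15$ points; wait — five disjoint triples need $15$ points, but only $14$ are available. So $\nu\le4$ is immediate from $|\Z_{14}|=14<15$. This also makes the empty-core case of the sunflower condition automatic, since a $6$-sunflower with empty core would be a matching of size $6$.

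The genuine remaining difficulty is therefore the one-point-core link matching check in (b). I expect to settle it by a short explicit Tutte–Berge certificate for the link of $0$, or, failing that, by the machine check already present in the Lean development (\cref{sec:lean}).
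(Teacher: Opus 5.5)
Your size, regularity and $\nu(\Hh)\le4$ arguments are correct and match the paper. For $\nu(\Hh)\le4$, drop the false start and just say $5\cdot3=15>14$. Using normal forms and coset sizes instead of cyclic gap sequences and the stabilizer order dividing $3$ and $14$ is an inessential variation. Your method for two-point cores is also the paper's.

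However, the proposal has a real gap: it never proves the one-point-core case. In (b) you only plan to search for a Tutte set, or else to defer to the Lean development, and the latter is not a proof step. In (a) you also never produce the pair-degree table, even though the bound $\le5$ is the whole content of that step. To make your counting rule precise, count, for each $d$, the ordered pairs $(a,b)$ in a seed with $b-a\equiv d$. A seed pair at distance $d\ne7$ then counts once. A pair at distance $7$ counts twice; these occur only in $\{0,1,7\}$ and $\{0,2,9\}$. This gives pair degrees $5,5,0,5,5,5,4$ at cyclic distances $1,\dots,7$.

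The missing idea comes straight out of that table. No seed contains a pair at cyclic distance $3$, so $d_{\Hh}(\{0,3\})=d_{\Hh}(\{0,11\})=0$. Hence $3$ and $11$ are isolated vertices of $\Lk_{\Hh}(0)$, and all $27$ link edges lie on the remaining $11$ vertices. Vertex count alone then gives $\nu(\Lk_{\Hh}(0))\le\lfloor 11/2\rfloor=5$, and translation gives the same bound at every point. In Tutte--Berge terms, $S=\varnothing$ is already a certificate, since the two isolated vertices and at least one odd component among the other $11$ vertices give three odd components. No search or machine check is needed. This is exactly how the paper closes the case.
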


\begin{proof}
The stabilizer of a three-element subset of $\Z_{14}$ has order dividing
both $3$ and $14$. It is therefore trivial, so every seed has an orbit of
length $14$. The cyclic gap sequences of the nine seeds, up to cyclic
rotation, are
\[
\begin{gathered}
 (1,4,9),\ (1,5,8),\ (1,6,7),\ (1,8,5),\ (1,9,4),\\
 (10,2,2),\ (2,4,8),\ (2,7,5),\ (2,8,4).
\end{gathered}
\]
They are distinct. Hence the nine orbits are disjoint and
$|\Hh|=9\cdot14=126$.

Five disjoint triples would require $15$ points. Thus
$\nu(\Hh)\le4$. To exclude two-point cores, count the pairs in the nine
seeds according to cyclic distance. Translation gives the following pair
degrees. A pair at cyclic distance $7$ is fixed by the translation by $7$,
so its orbit has length $7$, and each seed pair at that distance contributes
twice to the degree.
\begin{center}
\begin{tabular}{@{}lrrrrrrr@{}}
\toprule
cyclic distance & $1$ & $2$ & $3$ & $4$ & $5$ & $6$ & $7$\\
\midrule
pair degree in $\Hh$ & $5$ & $5$ & $0$ & $5$ & $5$ & $5$ & $4$\\
\bottomrule
\end{tabular}
\end{center}
No pair is therefore the core of a $6$-sunflower.

For a one-point core, consider $\Lk_{\Hh}(0)$. The vertices $3$ and $11$
are isolated because pairs at cyclic distance $3$ have degree zero. Every
edge of the link lies on the other $11$ vertices, so its matching number is
at most $5$. Translation gives the same conclusion for every point. This
excludes all possible core sizes.

Translation also makes all point degrees equal. Incidence counting gives
\[
 14d_{\Hh}(0)=3|\Hh|=378,
\]
so every point has degree $27$.
\end{proof}

The construction attains the maximum possible size on a $14$-point support.

\begin{proposition}
\label{prop:f36-fourteen-points}
Every $3$-uniform family on at most $14$ points that contains no
$6$-sunflower has at most $126$ members.
\end{proposition}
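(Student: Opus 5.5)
Let $\F$ be a $3$-uniform family on a point set $P$ with $|P|\le14$ and no $6$-sunflower. The plan is a double count of incidences between points and members, with the degree of each point bounded through its link. For $x\in P$, the link $\Lk_{\F}(x)$ is a graph on at most $13$ vertices. It has maximum degree at most $5$, since a pair of degree $6$ is the core of a $6$-sunflower. It also has matching number at most $5$, since six disjoint link edges lift to a $6$-sunflower with core $\{x\}$. On $13$ vertices the matching number is automatically at most $6$, so the vertex count alone does not cap it at $5$. The Chv\'atal--Hanson bound gives $\CH(5,5)=25+2\cdot1=27$. Hence every point has degree at most $27$, and
\[
 3|\F|=\sum_{x\in P}d_{\F}(x)\le14\cdot27=378,
\]
so $|\F|\le126$.

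The key step is the link bound, which I will check carefully via \cref{lem:link-bound} and \cref{eq:CH}. Using \cref{lem:link-bound} directly would give only $f(2,6)=\CH(5,5)=27$, and this is the same number. So the cleanest route is to invoke \cref{lem:link-bound} together with \cref{lem:small-uniformities}: $d_{\F}(x)\le f(2,6)=\CH(5,5)$. Evaluating \cref{eq:CH} at $D=q=5$ gives $\lfloor5/2\rfloor\lfloor5/3\rfloor=2$, so the value is $27$.

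If fewer than $14$ points are used, the same sum runs over fewer points and the bound only improves. The only thing to verify is the arithmetic $14\cdot27/3=126$, which matches \cref{lem:f36-cyclic-family} and shows that the cyclic family is $27$-regular and therefore extremal. I expect no real obstacle here; the one point to double-check is that $\CH(5,5)=27$ and not larger, that is, that the floor terms in \cref{eq:CH} are evaluated correctly.
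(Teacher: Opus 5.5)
Your proof is correct and is essentially the paper's argument. You bound each link graph by maximum degree and matching number at most $5$, apply the Chv\'atal--Hanson formula to get $\CH(5,5)=27$, and sum the point degrees to obtain $3|\F|\le14\cdot27=378$.
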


\begin{proof}
Let $\A$ be such a family. For every point $x$, the link graph
$\Lk_{\A}(x)$ has maximum degree at most $5$ and matching number at most
$5$. A violation gives a $6$-sunflower with core of size two or one,
respectively. By the Chv\'atal--Hanson formula in \eqref{eq:CH},
\[
 |E(\Lk_{\A}(x))|\le\CH(5,5)=27.
\]
Summing the point degrees gives
\[
 3|\A|\le14\cdot27=378.
\]
Thus $|\A|\le126$.
\end{proof}

For a second component, take the graph $G$ on a disjoint copy of $\Z_{11}$
with edge set
\begin{equation}
 \bigl\{\{i,i+1\},\{i,i+2\}:i\in\Z_{11}\bigr\}
 \mathbin{\cup}
 \bigl\{\{i,i+5\}:0\le i\le4\bigr\},
 \label{eq:f36-graph}
\end{equation}
where addition in the first set is modulo $11$. The graph has $27$ edges,
maximum degree $5$, and matching number at most $5$. It therefore contains
no $6$-sunflower. Add a new point $c$ and put
\[
 \K=\bigl\{\{c\}\cup e:e\in E(G)\bigr\}.
\]
The family $\K$ contains no $6$-sunflower, has $27$ members, and has matching
number $1$.

\begin{theorem}
\label{thm:f36-bounds}
One has
\[
 153\le f(3,6)\le255.
\]
\end{theorem}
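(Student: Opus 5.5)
The upper bound $f(3,6)\le255$ is already \cref{cor:triple-upper-values}, so only the lower bound $f(3,6)\ge153$ needs an argument. The plan is to take the disjoint union $\Hh\cup\K$, with $\Hh$ on $\Z_{14}$ and $\K$ on the disjoint support $\Z_{11}\cup\{c\}$. This follows the two-component pattern of the split inequality \eqref{eq:split}. By \cref{lem:f36-cyclic-family}, $|\Hh|=126$, and $|\K|=27$. The supports are disjoint, so the members are distinct and the union has $126+27=153$ members.

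To show that $\Hh\cup\K$ has no $6$-sunflower, I argue as in the proof of \cref{lem:star-split}. A sunflower with nonempty core cannot use members from both components, since a member of $\Hh$ and a member of $\K$ are disjoint, and so the core would be empty. It therefore lies entirely in $\Hh$ or entirely in $\K$, and both are $6$-sunflower-free. For $\Hh$ this is \cref{lem:f36-cyclic-family}. For $\K$, deleting $c$ from a sunflower gives a $6$-sunflower of distinct edges of $G$, which is impossible. A sunflower with empty core is a $6$-matching. However, $\nu(\Hh\cup\K)\le\nu(\Hh)+\nu(\K)\le4+1=5$, because $\K$ is intersecting: every member contains $c$.

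The only step needing care is the claimed properties of $G$ used for $\K$. First, $G$ has $11+11+5=27$ edges; these edge sets are disjoint because the differences $1$, $2$, $5$ are distinct modulo $11$. Second, each vertex has degree $4$ from the circulant part plus at most one chord $\{i,i+5\}$, $0\le i\le4$. Third, these five chords have distinct endpoints $0,\ldots,9$, so the maximum degree is $5$. Finally, a matching in an $11$-vertex graph has at most $5$ edges.

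With these facts, \cref{lem:small-uniformities}'s reasoning shows that $G$ is $6$-sunflower-free. Attaching $c$ therefore gives a $6$-sunflower-free intersecting family, and $f(3,6)\ge153$ follows.
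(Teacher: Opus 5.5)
Your proposal is correct and matches the paper's proof: place $\Hh$ and $\K$ on disjoint supports, note that a sunflower with nonempty core cannot use members from both, bound the matching number of the union by $4+1=5$, and take the upper bound from \cref{cor:triple-upper-values}. Your check that $G$ has $27$ edges, maximum degree $5$ and matching number at most $5$ supplies details the paper only asserts before the theorem.
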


\begin{proof}
Place $\Hh$ and $\K$ on disjoint supports. A sunflower with nonempty core
cannot use members from both families. An empty-core sunflower is a
matching, while
\[
 \nu(\Hh\dd\K)=\nu(\Hh)+\nu(\K)\le4+1=5.
\]
Hence $\Hh\dd\K$ contains no $6$-sunflower and has $126+27=153$ members.
The upper bound is \cref{cor:triple-upper-values}.
\end{proof}

\subsection{Seven petals}

The next construction uses a finite triple system. The family and a
standalone verifier are part of the supplementary material.

The verifier computes graph matching numbers by the following direct
recurrence. For a graph $G$ and $S\subseteq V(G)$, put
$\mu_G(S)=\nu(G[S])$ and $\mu_G(\varnothing)=0$. For nonempty $S$, choose
$v\in S$. Then
\begin{equation}
 \mu_G(S)=\max\left(
 \{\mu_G(S\setminus\{v\})\}
 \mathbin{\cup}
 \bigl\{1+\mu_G(S\setminus\{u,v\}):u\in N_G(v)\cap S\bigr\}
 \right).
 \label{eq:f37-matching-recurrence}
\end{equation}
The two terms distinguish whether a maximum matching leaves $v$ unmatched or
matches it to a neighbor. Memoization over vertex subsets makes the
calculation exact.

\begin{lemma}
\label{lem:f37-base-family}
There is a family $\A$ of $217$ triples on the point set
$\{0,1,\ldots,16\}$ such that every pair has degree at most $6$ and every
vertex link has matching number at most $6$. The family contains no
$7$-sunflower and satisfies $\nu(\A)\le5$.
\end{lemma}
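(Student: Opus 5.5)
The statement has two layers. One is an existence claim that is certified by an explicit finite object. The other is a pair of structural implications that follow from the verified properties by the same core-size case analysis used in \cref{lem:f36-cyclic-family}. The plan is to exhibit $\A$ explicitly, as the list of $217$ triples on $\{0,\ldots,16\}$ from the supplementary data. Ideally it would be a union of orbits under a small cyclic or affine group on $\Z_{17}$ together with some corrective triples, since $217$ is not a multiple of $17$. We then check three finite properties: distinctness, pair degree at most $6$, and matching number at most $6$ in every vertex link. The pair-degree check is a direct count over the $\binom{17}{2}=136$ pairs. The link check uses the memoized recurrence \cref{eq:f37-matching-recurrence} on each of the $17$ link graphs. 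Each link has $17$ or fewer vertices, so at most $2^{16}$ subsets are needed per point, and the calculation is exact.

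Given these properties, the absence of a $7$-sunflower follows by splitting on the core size, exactly as before. A two-point core would require a pair of degree at least $7$, which is excluded. A one-point core $\{x\}$ would produce seven pairwise disjoint edges in $\Lk_{\A}(x)$, which is excluded by the link matching bound. An empty core is a matching of size $7$, and this case is subsumed by the claim $\nu(\A)\le5$.

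For $\nu(\A)\le5$, counting alone gives only $\nu(\A)\le\lfloor17/3\rfloor=5$, since six disjoint triples would need $18$ points. No computation is needed for this part. Note that this is precisely why the first component should have matching number at most $k-2=5$, so that it can later be combined with the intersecting $42$-member family $\{c\}\cup e$ on a disjoint support, mirroring \cref{thm:f36-bounds}.

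The main obstacle is producing the family itself, not verifying it. A degree count similar to \cref{prop:f36-fourteen-points} shows how tight it must be. Each link has at most $\CH(6,6)=42$ edges, so $3|\A|\le17\cdot42$, giving $|\A|\le238$, and $217$ triples leaves little slack. To find it, I would first search for a $\Z_{17}$-invariant seed set of $12$ orbits ($204$ triples). I would choose the orbits to balance the pair degrees by cyclic distance at most $6$, while keeping each link's matching number at most $6$, for instance by isolating some vertices of the link as was done with distance $3$ for $k=6$. I would then add $13$ further triples greedily or by SAT/local search, subject to the same two local constraints. The final certificate is just the explicit list together with the verifier's checks.
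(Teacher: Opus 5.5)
Your proposal is correct and follows the paper's proof. The paper likewise takes the explicit $217$-triple list from the supplementary material and has a verifier check distinctness, the support, all pair degrees, and every vertex-link matching number via the recurrence \eqref{eq:f37-matching-recurrence}; it then excludes two-point, one-point, and empty cores exactly as you do, with $\nu(\A)\le\lfloor 17/3\rfloor=5$ from the point count. Your search strategy for finding the family is not needed for the proof, and the actual witness cannot contain twelve full $\Z_{17}$-orbits: vertex $8$ has only $35$ link edges, whereas twelve such orbits would force at least $36$ at every point.
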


\begin{proof}
Take the family listed in the supplementary material. The verifier checks
that it contains $217$ distinct sorted triples and has support
$\{0,1,\ldots,16\}$. It enumerates all pair degrees and applies
\eqref{eq:f37-matching-recurrence} to every vertex link. The resulting link
data are
\begin{center}
\begingroup
\scriptsize
\setlength{\tabcolsep}{3.5pt}
\begin{tabular}{c|rrrrrrrrrrrrrrrrr}
\toprule
vertex &0&1&2&3&4&5&6&7&8&9&10&11&12&13&14&15&16\\
\midrule
$|\Lk_{\A}(v)|$&38&39&39&39&38&39&39&38&35&38&37&39&38&39&38&39&39\\
$\nu(\Lk_{\A}(v))$&6&6&6&6&6&6&6&6&6&6&6&6&6&6&6&6&6\\
\bottomrule
\end{tabular}
\endgroup
\end{center}

A sunflower of triples has core size zero, one, or two. A two-point core
would require pair degree at least $7$. A one-point core would require a
matching of size $7$ in a vertex link. Seven disjoint triples cannot fit on
$17$ points. These facts exclude the three cases. The point count also
gives $\nu(\A)\le\lfloor17/3\rfloor=5$.
\end{proof}

Take two disjoint seven-point sets $P,Q$ and a new point $z$, all outside the
support of $\A$. Let $K_P$ and $K_Q$ be the complete graphs on $P$ and $Q$,
respectively, and put
\[
 H=K_P\dd K_Q,
 \qquad
 \B=\bigl\{\{z,x,y\}:xy\in E(H)\bigr\}.
\]
The graph $H$ has maximum degree $6$, matching number $6$, and $42$ edges.
It contains no $7$-sunflower. Deleting $z$ shows that $\B$ also contains
no $7$-sunflower. Moreover, $|\B|=42$ and $\nu(\B)=1$.

\begin{theorem}
\label{thm:f37-bounds}
One has
\[
 259\le f(3,7)\le474.
\]
\end{theorem}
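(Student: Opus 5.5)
The plan is to mirror the proof of \cref{thm:f36-bounds}, placing $\A$ from \cref{lem:f37-base-family} and $\B$ on disjoint supports. By construction the support of $\B$ is $P\cup Q\cup\{z\}$, which lies outside $\{0,1,\ldots,16\}$. The union $\A\dd\B$ then has $217+42=259$ distinct members, because the two families live on disjoint point sets. It remains to show that this union contains no $7$-sunflower.

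I would handle the three possible core types separately.

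First, take a sunflower with nonempty core $C$. Every member contains $C$. A member of $\A$ and a member of $\B$ are disjoint, so they cannot both contain $C$. Hence all seven members lie in one family. This contradicts \cref{lem:f37-base-family} for $\A$, and the observation that $\B$ contains no $7$-sunflower for $\B$.

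Second, take an empty-core sunflower, which is a matching. Since the supports are disjoint, matching numbers add:
\[
 \nu(\A\dd\B)=\nu(\A)+\nu(\B)\le5+1=6.
\]
So no seven pairwise disjoint members exist.

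This gives $f(3,7)\ge259$. The upper bound $f(3,7)\le474$ is the $k=7$ case of \cref{cor:triple-upper-values}.

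I expect no real obstacle, since the substantive inputs (the verified $217$-member family and the facts about $\B$) are already established. The only point to state carefully is that the intersection of a member of $\A$ with a member of $\B$ is empty. This is what forces a nonempty-core sunflower to stay inside one component.
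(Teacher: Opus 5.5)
Your proposal is correct and matches the paper's proof essentially step for step. Disjoint supports confine any nonempty-core sunflower to one component, the matching numbers add to at most $5+1=6$, and the upper bound is the $k=7$ entry of \cref{cor:triple-upper-values}. (One minor wording point: you announce three core types but treat two cases, nonempty and empty core; this does not affect the argument.)
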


\begin{proof}
The supports of $\A$ and $\B$ are disjoint. A sunflower with nonempty core
cannot use triples from both families. An empty-core sunflower is a
matching, while
\[
 \nu(\A\dd\B)=\nu(\A)+\nu(\B)\le5+1=6.
\]
Thus $\A\dd\B$ contains no $7$-sunflower and has $217+42=259$ members.
The upper bound is \cref{cor:triple-upper-values}.
\end{proof}

\subsection{Verification of the finite witness}

The certificate for the lower bound $f(3,7)\ge259$ consists of the explicit
$217$-triple family and the $42$-member family $\B$ constructed above. The
standalone verifier checks the size and distinctness of the $217$-triple
family, its support, every pair degree, and every vertex-link matching
number.

\section{Formal verification in Lean}
\label{sec:lean}

The main combinatorial arguments of
\cref{sec:preliminaries,sec:constructions,sec:triple-upper}, the appendix
capacity recurrence, and every explicit finite witness used for the numerical
lower bounds have been formalised in the Lean~4 proof assistant
\cite{deMouraUllrich2021} over the Mathlib library \cite{mathlib2020}. Since
the finiteness of $f(w,k)$ rests on the Erd\H{o}s--Rado theorem, the
formalisation represents $f$ and $\psi$ by pairs of upper-bound and witness
predicates. The finite witness families are encoded explicitly in Lean,
and their properties are verified by computation within Lean's kernel.

Some external results and analytic arguments used in the paper are outside
the Lean formalisation. The Chv\'atal--Hanson upper bound is supplied as a
hypothesis. For the three exponential-rate bounds, Lean verifies
constructions of size at least $c\,r_k^w$, for some $c>0$ and every $w\ge3$;
the limiting and asymptotic arguments are not included. The exhaustive
computations behind $f(3,4)\le49$ and $f(4,3)\le83$ are checked separately,
as described in \cref{sec:f34,sec:f43}. The correspondence between the paper statements and
formal theorems, together with the pinned toolchain and build instructions,
is part of the supplementary material.

\appendix

\section{Alternative capacity recurrence}
\label{sec:capacity-appendix}

This appendix proves a second lower-bound recurrence for $\psi(w,k,m)$,
valid for every matching cap $2\le m\le k-1$. The construction assigns a
smaller sunflower-free family to each edge of a bounded-degree auxiliary
graph on $m$ points; bounding the vertex degrees limits how often any
auxiliary point is used. At $m=k-1$ the resulting bound coincides with
\cref{cor:endpoint-recurrence}, so no result in the body of the paper
depends on this appendix. At smaller matching caps neither recurrence
dominates the other. For $k=5$ and $m=3$, \cref{thm:block-size} gives
$\psi(w,5,3)\ge4\psi(w-1,5,3)+f(w-3,5)$, while \cref{thm:capacity} gives
$\psi(w,5,3)\ge3\psi(w-1,5,3)+3f(w-2,5)$; which bound is larger depends on
the relative growth of the two quantities.

We use the following standard circulant construction.

\begin{lemma}[Bounded-degree graph]
\label{lem:bounded-degree-graph}
For integers $n\ge1$ and $0\le d\le n-1$, there is a graph on $n$ vertices
with maximum degree at most $d$ and exactly $\lfloor nd/2\rfloor$ edges.
\end{lemma}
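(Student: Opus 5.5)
The plan is to use the circulant graph on the vertex set $\Z_n$. If $n=1$ or $d=0$, the empty graph on $n$ vertices works, since $\lfloor nd/2\rfloor=0$. Otherwise set $\delta=\lfloor d/2\rfloor$. Start from the graph $G_0$ on $\Z_n$ whose edges are the pairs $\{i,i+j\}$ for $1\le j\le\delta$. Because $d\le n-1$, we have $2\delta\le n-1$, so the differences $\pm1,\ldots,\pm\delta$ are $2\delta$ distinct nonzero residues. Hence $G_0$ is simple and $2\delta$-regular, and it has $n\delta$ edges.

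If $d$ is even, then $2\delta=d$ and $n\delta=nd/2$, so $G_0$ already has the required properties.

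If $d$ is odd, then $d=2\delta+1$. We need $\lfloor n/2\rfloor$ further edges that raise each degree by at most one, i.e.\ a matching of size $\lfloor n/2\rfloor$ in the complement of $G_0$, using no difference $j$ with $1\le j\le\delta$. The degree bound $d\le n-1$ gives $2\delta+2\le n$, so $\delta+1\le\lfloor n/2\rfloor$. Take $h=\lfloor n/2\rfloor$ and the edges $\{i,i+h\}$ for $0\le i<h$. Their cyclic distance is $\min(h,n-h)=h>\delta$, so none of them lies in $G_0$. These pairs are also pairwise disjoint, because the vertices $0,\ldots,h-1$ and $h,\ldots,2h-1$ are distinct residues when $2h\le n$.

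Adding this matching gives maximum degree at most $2\delta+1=d$. The edge count becomes $n\delta+\lfloor n/2\rfloor=\lfloor n(2\delta+1)/2\rfloor=\lfloor nd/2\rfloor$.

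The only delicate step is this odd case. There we must check that the long chords $\{i,i+h\}$ do not coincide with edges of $G_0$, and that they form a matching even when $n$ is odd (in which case vertex $n-1$ stays unmatched). Both facts follow from the inequality $2\delta+2\le n$, which in turn follows from $d\le n-1$.
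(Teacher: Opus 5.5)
Your proof is correct and uses the paper's construction: the circulant graph on $\Z_n$ with cyclic distances $1,\ldots,\lfloor d/2\rfloor$, plus, for odd $d$, a matching of $\lfloor n/2\rfloor$ long chords. The only difference is in how the chords are found: you write down $\{i,i+\lfloor n/2\rfloor\}$ for $0\le i<\lfloor n/2\rfloor$ explicitly, which handles both parities of $n$ at once. The paper instead treats odd $n$ separately and extracts the matching from the single $n$-cycle formed by the distance-$(n-1)/2$ edges, using $\gcd((n-1)/2,n)=1$.
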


\begin{proof}
Use vertex set $\Z_n$. If $d=0$, take the empty graph. If $d$ is positive
and even, join vertices at cyclic distances $1,\ldots,d/2$. The resulting
graph is $d$-regular.

Suppose that $d$ is odd and $n$ is even. Join vertices at cyclic distances
$1,\ldots,(d-1)/2$ and add the antipodal perfect matching. This graph is
$d$-regular.

Suppose that both $n$ and $d$ are odd. Then $d\le n-2$. Start with the
$(d-1)$-regular graph using cyclic distances $1,\ldots,(d-1)/2$. The edges
of cyclic distance $(n-1)/2$ form one cycle of length $n$, since
\[
 \gcd\left(\frac{n-1}{2},n\right)=1.
\]
Add a matching of size $(n-1)/2$ from this cycle. The resulting graph has
$n-1$ vertices of degree $d$ and one vertex of degree $d-1$. It has
$(nd-1)/2=\lfloor nd/2\rfloor$ edges.
\end{proof}

\begin{theorem}[Capacity recurrence]
\label{thm:capacity}
Let $k\ge3$ and $2\le m\le k-1$. Put
\[
 d=\min\{k-m,m-1\},
 \qquad
 E(m,k)=\left\lfloor\frac{md}{2}\right\rfloor.
\]
For every $w\ge3$,
\begin{equation}
 \psi(w,k,m)
 \ge m\psi(w-1,k,m)+E(m,k)f(w-2,k).
 \label{eq:capacity}
\end{equation}
\end{theorem}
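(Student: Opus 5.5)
We mimic the proof of \cref{thm:block-size}, replacing the auxiliary set $Y$ of size $k-1$ by an $m$-point set and the disjoint $t$-blocks by the edges of a graph. Take pairwise disjoint sets $X,Y,Z$ with $|Y|=m$. On $X$ place a $(w-1)$-uniform family $\A$ with no $k$-sunflower, $|\A|=\psi(w-1,k,m)$, and $\nu(\A)\le m-1$. By \cref{lem:bounded-degree-graph} (applicable since $d\le m-1$), choose a graph $G$ on $Y$ with maximum degree at most $d$ and exactly $E(m,k)$ edges. For each edge $e\in E(G)$ take a $(w-2)$-uniform family $\E_e$ on $Z$ with no $k$-sunflower and $|\E_e|=f(w-2,k)$. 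Put
\[
 \F=\{T\cup\{y\}:T\in\A,\ y\in Y\}\cup\{e\cup S:e\in E(G),\ S\in\E_e\}.
\]
The two parts meet $Y$ in one and two points, so they are disjoint, and $|\F|$ equals the right side of \cref{eq:capacity}.

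\emph{Matching number.} Consider a matching with $q$ members from the first part and $b$ from the second. If $b=0$, the $X$-parts form a matching in $\A$, so $q\le m-1$. If $b\ge1$, the $Y$-traces are disjoint, so $q+2b\le m$ and hence $q+b\le m-b\le m-1$. Thus $\nu(\F)\le m-1$.

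\emph{No $k$-sunflower.} Suppose $F_1,\dots,F_k$ form a sunflower. Since $k>m=|Y|$, two members share a point $y\in Y$. By \cref{lem:occurrence}, $y$ lies in the core.
\begin{itemize}
\item[(a)] If all members come from the first part, deleting $y$ gives a $k$-sunflower in $\A$.
\item[(b)] If all come from the second part, their edges all contain $y$. If two members use different edges $yu$ and $yu'$, then $u$ lies in exactly one member, which is allowed, but each edge through $y$ is used by at most... This needs care. Instead argue via the core. If some edge $e$ is used twice, then $e$ is contained in the pairwise intersection, so every member contains $e$, and deleting $e$ gives a $k$-sunflower in $\E_e$. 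Otherwise the members use distinct edges at $y$, so there are at most $\deg_G(y)\le d\le k-m<k$ of them. This is a contradiction.
\item[(c)] If both parts occur, then for a first-part member $T\cup\{y\}$ and a second-part member $e\cup S$, their intersection is exactly $\{y\}$ (the supports $X$ and $Z$ are disjoint), so the core is $\{y\}$. The first-part members have pairwise disjoint $X$-parts, giving at most $m-1$ of them. The second-part members have core $\{y\}$, so they use distinct edges through $y$, giving at most $d\le k-m$ of them. The total is at most $(m-1)+(k-m)=k-1<k$, a contradiction.
\end{itemize}

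The main obstacle is the mixed case (c) together with case (b). This is exactly where the choice $d\le k-m$ is needed. The other constraint, $d\le m-1$, is needed only so that \cref{lem:bounded-degree-graph} applies on $m$ vertices.
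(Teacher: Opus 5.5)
Your proposal is correct and follows the paper's proof essentially step for step. It uses the same construction with a graph of maximum degree at most $d$ on the $m$-point set $Y$, the same matching count $q+b\le m-b\le m-1$, and the same three-case sunflower analysis, in which $d\le k-m$ bounds the second-line members in both the pure and mixed cases. Your ``repeated edge lies in the core'' phrasing is equivalent to the paper's ``repeated second endpoint'' via the occurrence rule. You should delete the abandoned ``This needs care'' fragment in case~(b), since the argument that follows it is complete.
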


\begin{proof}
Take pairwise disjoint sets $X,Y,Z$, where $|Y|=m$. On $X$, take a
$(w-1)$-uniform family $\A$ with no $k$-sunflower such that
\[
 |\A|=\psi(w-1,k,m),
 \qquad
 \nu(\A)\le m-1.
\]
By \cref{lem:bounded-degree-graph}, there is a graph $P$ on $Y$ with
\[
 \Delta(P)\le d,
 \qquad
 |E(P)|=E(m,k).
\]
For each $p\in E(P)$, take a $(w-2)$-uniform family $\E_p$ on $Z$ with no
$k$-sunflower and with $|\E_p|=f(w-2,k)$. Define
\begin{equation}
\begin{split}
 \F={}&\{T\cup\{y\}:T\in\A,\ y\in Y\}\\
 &{}\cup\{p\cup S:p\in E(P),\ S\in\E_p\}.
\end{split}
\label{eq:capacity-family}
\end{equation}
The two lines meet $Y$ in one and two points. They are disjoint, and all
members within each line are distinct. This gives the count in
\cref{eq:capacity}.

Let a matching in $\F$ contain $q$ members from the first line and $b$ from
the second. If $b=0$, the $X$-parts form a matching in $\A$, so
$q\le m-1$. If $b\ge1$, disjointness of their intersections with $Y$ gives
$q+2b\le m$. Hence
\[
 q+b\le m-b\le m-1.
\]
Thus $\nu(\F)\le m-1$.

Suppose that $k$ members of $\F$ form a sunflower. Every member meets $Y$,
and $|Y|=m<k$. By \cref{lem:occurrence}, some $y\in Y$ belongs to all $k$
members.

If all members come from the first line of \cref{eq:capacity-family}, deleting
$y$ gives a $k$-sunflower in $\A$. Suppose that they all come from the second
line. Every corresponding edge of $P$ contains $y$. If a second endpoint
occurs in two members, the occurrence rule forces it to occur in all $k$
members. All members then use the same edge $p$, and deleting $p$ gives a
$k$-sunflower in $\E_p$. If no second endpoint repeats, the number of members
is at most
\[
 \deg_P(y)\le d\le k-m<k.
\]

Finally, suppose that both lines occur. The intersection of a member from the
first line with one from the second is exactly $\{y\}$. This is the sunflower
core. The $X$-parts of the members from the first line form a matching in
$\A$, so there are at most $m-1$ of them. A second endpoint of an edge of $P$
cannot occur twice among the second-line members: it occurs in no member from
the first line, contrary to the occurrence rule. There are therefore at
most $\deg_P(y)\le k-m$ members from the second line. The total is at most
\[
 (m-1)+(k-m)=k-1,
\]
which is impossible.
\end{proof}

At $m=k-1$, \cref{thm:block-size,thm:capacity} both give
\cref{eq:endpoint-recurrence}. For other values of $m$, their coefficients
and lags differ.

\section*{Acknowledgments}

This paper was produced in an AI-assisted workflow. AI assistants helped
explore candidate constructions, draft proofs, and write software. The
authors verified all statements and results and remain fully responsible for
them.

\end{document}